\newtheorem{theorem}{Theorem}[subsection]
\newtheorem{lemma}[theorem]{Lemma}
\newtheorem{definition}[theorem]{Definition}
\newtheorem{corollary}[theorem]{Corollary}
\newtheorem{proposition}[theorem]{Proposition}
\newtheorem{remark}[theorem]{Remark}
\newcommand{\CC}{\mathbb{C}}
\newcommand{\QQ}{\mathbb{Q}}
\newcommand{\PP}{\mathbb{P}}
\newcommand{\ZZ}{\mathbb{Z}}
\newcommand{\EE}{\mathbb{E}}
\newcommand{\LL}{\mathbb{L}}
\renewcommand{\AA}{\mathbb{A}}
\newcommand{\OO}{\mathscr{O}}
\newcommand{\Hom}{Hom}
\newcommand{\n}{\mathfrak{n}}
\newcommand{\m}{\mathfrak{m}}
\newcommand{\Sec}[2]{{\mathrm{Sec}}(#1/#2)}
\newcommand{\Mbar}{{\overline{\mathcal{M}}}}
\newcommand{\C}{\mathcal{C}}
\newcommand{\B}{\mathcal{B}}
\newcommand{\F}{\mathcal{F}}
\newcommand{\E}{\mathscr{E}}
\newcommand{\sslash}{\mathord{/\mkern-6mu/}}
\newcommand{\omegabul}{\omega^\bullet}
\newcommand{\FF}{\mathbb{F}}
\newcommand{\coneE}{\mathfrak{E}}
\newcommand{\coneC}{\mathfrak{C}}
\newcommand{\defY}{\mathcal{Y}}
\newcommand{\defZ}{\mathcal{Z}}
\newcommand{\defE}{E_\defY^-}
\newcommand{\defsE}{\E_\defY^-}
\newcommand{\Mgnt}{{\mathfrak{M}}}
\newcommand{\cM}{\mathcal{M}}
\newcommand{\Ct}{\mathfrak{C}}
\renewcommand{\AA}{\mathbb{A}}
\newcommand{\fP}{\mathfrak{P}}
\newcommand{\vir}{\mathrm{vir}}
\newcommand{\loc}{\mathrm{loc}}
\newcommand{\mv}{\mathrm{mov}}
\DeclareMathOperator{\vb}{vb}
\renewcommand{\H}{\vb(\omega)}
\newcommand{\W}{\mathcal{W}}
\DeclareMathOperator{\fix}{fix}
\DeclareMathOperator{\Spec}{Spec}
\DeclareMathOperator{\Ext}{Ext}
\renewcommand{\Hom}{\mathrm{Hom}}
\DeclareMathOperator{\rk}{rk}
\DeclareMathOperator{\Sym}{Sym}
\DeclareMathOperator{\Pic}{Pic}
\newcommand{\fa}{\mathfrak{a}}
\newcommand{\fU}{\mathfrak{U}}
\newcommand{\D}{\mathrm{D}}
\newcommand{\Dqc}{\mathrm{D}_\mathrm{qc}}
\begin{document}

\title{Virtual cycles of stable (quasi-)maps with fields}
\author{Qile Chen \and Felix Janda \and Rachel Webb}

\address[Q. Chen]{Department of Mathematics\\
Boston College\\
Chestnut Hill, MA 02467\\
U.S.A.}
\email{qile.chen@bc.edu}

\address[F. Janda]{Department of Mathematics \\
  University of Notre Dame \\
  Notre Dame, IN 46556 \\
  U.S.A}
\email{fjanda@nd.edu}

\address[R. Webb]{Department of Mathematics\\
University of California, Berkeley\\
Berkeley, CA 94720\\
U.S.A.}
\email{rwebb@berkeley.edu}

\date{\today}

\begin{abstract}
  We generalize the results of Chang--Li, Kim--Oh and Chang--Li on the
  moduli of $p$-fields to the setting of (quasi-)maps to complete
  intersections in arbitrary smooth Deligne--Mumford stacks with
  projective coarse moduli. 
  In particular, we show that the virtual cycle of stable (quasi-)maps to a complete intersection can be recovered by the cosection localized virtual cycle of the moduli of $p$-fields of the ambient space. 
  \end{abstract}

\keywords{virtual cycle, cosection localization, stable maps with $p$-fields}

\subjclass[2010]{14N35, 14D23}

\maketitle

\setcounter{tocdepth}{1}
\tableofcontents

\section{Introduction}

This paper generalizes Chang--Li's work \cite{CL12} on the moduli of
stable maps with $p$-fields for the quintic hypersurface in $\PP^4$ to
a theory of stable maps with $p$-fields for complete intersections in
smooth Deligne-Mumford stacks (Theorem \ref{thm:mainthm}).

As applications, we reprove a weak version of quantum Lefschetz in
genus 0 (Theorem \ref{thm:lefschetz}), and most importantly, in all
genus, we derive a formula for the virtual cycle of the moduli space
of stable (quasi-)maps to a complete intersection as a virtual cycle
on a moduli space of $p$-fields, which has simpler geometry
(Corollaries \ref{cor:stablemaps} and \ref{cor:quasimaps}).
In combination with the theory of the logarithmic Gauged Linear Sigma
Model (GLSM) in \cite{CJR19P1}, this formula provides a tool for
studying the higher genus Gromov--Witten invariants of complete
intersections.
This formula is new in the case when the target is not a GIT quotient.
Our approach to the $p$-fields problem differs from those in the
literature as we use a setup reminiscent of a GLSM.

To understand how the moduli of $p$-fields is a tool for studying
higher-genus Gromov--Witten invariants, recall the strategy of the
original proof in {\cite{Gi98b, LLY97}} of the genus-zero mirror
theorem for the quintic hypersurface $X \subset \PP^4$.
The first step uses the \textit{quantum Lefschetz principle}
\cite{KKP03, CoGi07} to relate the Gromov-Witten invariants of $X$ to
those of $\PP^4$.
Indeed, a weak version of the quantum Lefschetz principle states
\begin{equation}\label{eq:intro1}
\iota_*[\overline{\cM}_{0,n}(X, \beta)]^{\vir} = e(R^0\pi_*f^*\OO(5))\cap [\overline{\cM}_{g,n}(\PP^4, \iota_*\beta)]^\vir
\end{equation}
(see Theorem \ref{thm:lefschetz}).
The second step uses torus localization on
$\overline{\cM}_{g, n}(\PP^4, \iota_* \beta)$.
(The quintic $X$ itself does not carry an adequate torus action.)

The naive version of the quantum Lefschetz principle fails in positive
genus.
However, an equality similar to \eqref{eq:intro1} holds when we
replace $\overline{\cM}_{g,n}(\PP^4, \iota_*\beta)$ with the
\textit{moduli of stable maps with $p$-fields}
{$\cM_{g,n}(\PP^4, \OO(5), \iota_*\beta)$}.
{The new moduli space} $\cM_{g,n}(\PP^4, \OO(5), \iota_*\beta)$
is a cone over $\overline{\cM}_{g,n}(\PP^4, \iota_*\beta)$ and in
particular it carries a nontrivial torus action.
{On the other hand, the main result in \cite{CL12} can be lifted to an equality of virtual cycles}
\begin{equation}\label{eq:intro2}
[\Mbar_{g,n}(X, \beta)]^\vir = \pm[\cM_{g,n}(\PP^4, \OO(5), \iota_*\beta)]^\vir_\loc
\end{equation}
(see also Corollary \ref{cor:stablemaps}), where the right hand side
is the \textit{cosection localized virtual class} of \cite{KiLi13}.
%
{Equality} \eqref{eq:intro2} is
leveraged to compute higher genus Gromov--Witten invariants of the
quintic { using mixed-spin-$p$-fields by Chang--Guo--Li--Li--Liu
  in \cite{CLLL15P, CLLL16P, CGLL18P, CGL18Pa, CGL18Pb}, and using log
  GLSM by Guo, Ruan, and the first and second author in \cite{CJR19P1,
    CJR19P2, CJR19P3, CJR19P3, GJR17P, GJR18P}.}

We remark that there are several other approaches to quantum Lefschetz
and the computation of the Gromov-Witten invariants of the quintic in
genus one, including \cite{VZ08, Zi08, KiLh18}.

{Given the success of the $p$-fields aproach, it is natural to
  ask if \eqref{eq:intro2} holds for} an arbitrary {smooth}
complete intersections $X$ in smooth Deligne--Mumford stacks $Y$ with
projective coarse moduli.
The main result of this paper (Theorem \ref{thm:mainthm})
{gives a complete answer to this question,} which
{implies that (see Corollary \ref{cor:stablemaps})}
\[
[\cM(X)]^{\vir} = \pm[\cM(Y, E)]^{\vir}_{\loc}
\]
when $\cM(X)$ is a moduli stack of stable (quasi-)maps to $X$,
$\cM(Y, E)$ is a moduli stack of $p$-fields defined by the vector
bundle $E$ whose section cuts out $X$, and the sign is a function of
various data defining the moduli. When $Y$ is a GIT quotient of an
affine variety, our theorem recovers the main results of
\cite{KiOh18P, CL18}. Since the first version of this article appeared
on the arXiv, R. Picciotto \cite{Pi20P}  found an alternative approach
to Corollary~\ref{cor:stablemaps}.

Our approach unifies these special cases in a single theory. We
anticipate applications in both the GIT and non-GIT settings {where
  the latter fits the general set-up of log GLSM.}
As an example of the latter, we hope that Theorem \ref{thm:mainthm},
combined with the log GLSM, can be used to investigate the conjectures
of Oberdieck--Pixton \cite{ObPi19} in the case of Weierstrass
elliptic fibrations.

\subsection{Main definition and result} 
\label{sec:introdefs}

We now state precise definitions and results.
In order to both address stable maps and quasi-maps, we work in the
following abstract set-up.
Fix an algebraic stack $Y$ whose open Deligne--Mumford locus
$Y^0 \subset Y$ is smooth and dense in $Y$.
Let $E$ be a vector bundle on $Y$ with $\E$ its sheaf of sections, and
$s$ be a global section of $E$.
Let $X\subset Y$ denote the zero locus of $s$.
Let $\Mgnt:={\mathfrak{M}_{g,n}^{\mathrm{tw}}}$ denote the moduli
space of prestable twisted curves with genus $g$ and $n$ markings (see
\cite[Section~4]{AV02}), with $\Ct$ its universal curve. 

By \cite[Thm~1.2]{HaRy19} there is an algebraic stack $\Hom_{\Mgnt}(\Ct, Y\times \Mgnt)$ of morphisms from $\Ct$ to $Y\times \Mgnt$ over $\Mgnt$. Choose an open substack $\cM(Y) \subset \Hom_{\Mgnt}(\Ct, Y\times \Mgnt)$, letting $\cM(X) = \cM(Y) \times_{\Hom_{\Mgnt}(\Ct, Y\times \Mgnt)} \Hom_{\Mgnt}(\Ct, X\times \Mgnt)$ denote the moduli of maps in $\cM(Y)$ that factor through $X \subset Y$, such that the following conditions hold:
\begin{enumerate}
\item The stack $\cM(Y)$ is separated, Deligne--Mumford, and finite type.
\item On both $\cM(X)$ and $\cM(Y)$, the canonical construction
  \eqref{eq:candidatepot} is a perfect obstruction theory.
\item For any closed point $[f\colon C \rightarrow Y] \in \cM(Y)$, the locus $f^{-1}(Y^0)$ is open dense in $C$.
\item The universal curve $\mathcal{C} \to \cM(X)$ admits a relatively ample line bundle.
\end{enumerate}
For example, one may take $\cM(Y)$ to be moduli space of
stable maps (see Section~\ref{sec:stablemaps}) or $\epsilon$-stable
quasimaps (see Section~\ref{sec:quasimaps}).

Motivated by the original definition in \cite{CL12}, we define the moduli stack  $\cM(Y, E)$  of {\em maps in $\cM(Y)$ with $p$-fields} to be a certain cone over $\cM(Y)$ as follows. Let $\omega$ denote the relative dualizing sheaf of $\Ct \rightarrow \Mgnt$. Let $Z$ be the vector bundle on $\Ct \times Y$ whose sheaf of sections is $\E \boxtimes \omega^\vee$. As in \cite[Thm~1.3]{HaRy19} define the moduli of sections $\Sec{Z}{\Ct}$ to be the (algebraic) stack over $\Mgnt$ with fibers
\[
\Sec{Z}{\Ct}(T) = \Hom_{\Ct}(\Ct \times_\Mgnt T, Z) = \Hom_{\Ct \times_\Mgnt T}(\Ct \times_\Mgnt T, Z \times_\Mgnt T).
\]
The projection $Z \rightarrow \Ct \times Y$ induces a morphism $\Sec{Z}{\Ct}\rightarrow\Hom_{\Mgnt}(\Ct, Y\times \Mgnt)$. The stack $\cM(Y, E)$ is the fiber product
\[
\cM(Y, E) = \cM(Y) \times_{\Hom_{\Mgnt}(\Ct, Y\times \Mgnt)} \Sec{Z}{\Ct}.
\]
It carries a perfect obstruction theory \eqref{eq:candidatepot} inherited from $\Sec{Z}{\Ct}$
and, under additional assumptions, a cosection \eqref{eq:defcosec} determined by
$s$ whose degeneracy locus is contained in $\cM(X)$ (Corollary \ref{cor:specialcosec}).

To introduce the main result, we define a locally constant function
\begin{equation}\label{eq:virtual-rank-intro}
\chi(E, \cM(X)) \colon  \cM(X) \to \ZZ
\end{equation}
with $\chi(E, \cM(X))([f]) = \chi(f^*\E)$ for any closed point $[f] \in \cM(X)$.\footnote{The function $\chi(E, \cM(X))$ is called the ``virtual rank'' in \cite{KiOh18P}.} 
Since $\chi(E, \cM(X))$ is locally constant on $\cM(X)$, this defines a class
\[
(-1)^{\chi(E, \cM(X))} \in A^0(\cM(X)).
\]
See Remark \ref{rmk:virtual-rank} below for more details, including an explicit formula for
$\chi(E, \cM(X))([f])$.

\begin{theorem}\label{thm:mainthm}
The stack $\cM(Y, E)$ is a separated, finite-type, Deligne--Mumford stack carrying a canonical perfect obstruction theory \eqref{eq:candidatepot}. Furthermore, if $s$ is a regular section and $X \cap Y^0$ is smooth, then there is a cosection localized virtual cycle $[\cM(Y, E)]^{\vir}_{\loc} \in A_*(\cM(X))$ satisfying
\begin{equation}\label{eq:thma}
i_*[\cM(Y, E)]^{\vir}_{\loc} = [\cM(Y, E)]^{\vir} \in A_*(\cM(Y, E))
\end{equation}
and
\begin{equation}\label{eq:thmb}
[\cM(Y, E)]^{\vir}_{\loc} = (-1)^{\chi(E,\cM(X))}[\cM(X)]^{\vir} \in A_*(\cM(X)).
\end{equation}
\end{theorem}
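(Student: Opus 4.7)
The proof naturally splits into three stages: establishing the geometry and the canonical perfect obstruction theory on $\cM(Y,E)$; constructing the cosection, identifying its degeneracy locus, and deducing \eqref{eq:thma}; and finally proving the comparison \eqref{eq:thmb}.

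For the first two stages, note that $\cM(Y,E)\to\cM(Y)$ has affine fibers $H^0(C,f^*\E\otimes\omega_C)$ of bounded dimension over a separated, finite-type, Deligne--Mumford base, so these properties descend to $\cM(Y,E)$. The canonical obstruction theory \eqref{eq:candidatepot} splices the perfect obstruction theory of $\cM(Y)$ (from hypothesis (2)) with the evident one on the moduli of sections of a locally free sheaf on a family of curves; perfectness follows from hypothesis (2) together with the fact that $R\pi_*(f^*\E\otimes\omega^\vee)$ is perfect of amplitude $[0,1]$. The cosection $\sigma$ is the $p$-field cosection determined by $s$: on an obstruction class $(\delta f,\delta p)\in H^1(C,f^*T_Y)\oplus H^1(C,f^*\E\otimes\omega_C^{-1})$, one sets
\[
\sigma(\delta f,\delta p)=\int_C\big\langle p,\, ds(\delta f)\big\rangle+\big\langle s(f),\,\delta p\big\rangle,
\]
using the natural pairing $\E\otimes\E^\vee\to\OO$ and the trace on $H^1(C,\omega_C)$. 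A direct check shows that $\sigma$ vanishes exactly when $s\circ f\equiv 0$, identifying the degeneracy locus with $\cM(X)$, and \eqref{eq:thma} is then the push-forward property of \cite{KiLi13}.

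The main work is \eqref{eq:thmb}. The plan is to reduce everything to the locus $\cM(X)$. Since $s|_X\equiv 0$, the fiber product $\cM(Y,E)\times_{\cM(Y)}\cM(X)$ is the vector bundle stack $\vb(R\pi_*(f^*\E\otimes\omega^\vee))$ over $\cM(X)$. Regularity of $s$ together with smoothness of $X\cap Y^0$ yields the conormal sequence $0\to E|_X^\vee\to\Omega_Y|_X\to\Omega_X\to 0$; pushing forward along $\pi$ and dualizing produces a distinguished triangle that compares the canonical obstruction theories of $\cM(X)$ and of the restriction of $\cM(Y)$ to $\cM(X)$, with cokernel controlled by $R\pi_*(f^*E)$. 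Serre duality identifies the latter, up to shift, with the contribution of the $p$-field factor. Splicing these yields a morphism of obstruction theories from $\cM(X)$ into $\cM(Y,E)|_{\cM(X)}$ whose cokernel is precisely $\vb(R\pi_*(f^*\E\otimes\omega^\vee))$, and the restricted cosection becomes the tautological evaluation cosection on this cokernel. A local computation (in the spirit of Chang--Li and Kim--Oh) shows that the cosection-localized virtual cycle of such a vector bundle stack with tautological cosection equals $(-1)^{\chi(E,\cM(X))}$ times the fundamental class of its base, and \eqref{eq:thmb} follows by virtual pullback from $\cM(X)$.

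The decisive obstacle is precisely this virtual-pullback comparison: one must align compatible two-term resolutions of all obstruction theories involved, track the cosection through these choices, and verify that the sign arising from the local Koszul/Euler-class computation is produced consistently. A convenient framework is Manolache's virtual pullback augmented by a cosection-compatible refinement; the bulk of the effort will be in setting this up in the present abstract setting (in particular when $Y$ is not a GIT quotient) and in a careful local model—essentially the linear case of a regular section of a trivial bundle on affine space—that pins down the sign $(-1)^{\chi(E,\cM(X))}$.
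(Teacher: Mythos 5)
Your first stage (geometry of $\cM(Y,E)$, splicing of obstruction theories, the Chang--Li cosection and equation \eqref{eq:thma} via \cite{KiLi13}) is essentially what the paper does, modulo imprecision: the degeneracy locus is not ``where $s\circ f\equiv 0$'' but the zero section $\cM(X)\subset\cM(Y,E)$ (one needs $s\circ f\equiv 0$ \emph{and} $p\equiv 0$), and the containment of the degeneracy locus in $\cM(X)$ is exactly where the smoothness of $X\cap Y^0$ enters (via the identification of the critical locus of the superpotential, Lemma \ref{lem:critical-loci}); your proposal never uses that hypothesis.

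The genuine gap is in your treatment of \eqref{eq:thmb}. The class $[\cM(Y,E)]^{\vir}_{\loc}$ is $0^!_{\sigma}$ applied to the intrinsic normal cone $\coneC$ of $\cM(Y,E)$, and this cone is supported over \emph{all} of $\cM(Y,E)$, not over $\cM(X)$. Your plan ``reduces everything to the locus $\cM(X)$'' by restricting the moduli space and its obstruction theory to $\cM(Y,E)\times_{\cM(Y)}\cM(X)$ and computing a Koszul/Euler class there; but no functoriality of the cosection-localized Gysin map lets you replace $\coneC$ by its restriction over $\cM(X)$, and Manolache's virtual pullback compares virtual classes along a morphism, not cosection-localized classes supported on a degeneracy locus. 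This step is not a matter of ``aligning resolutions and tracking the sign'' --- it is the entire content of the theorem, and every existing proof supplies an additional mechanism to move the cone. The paper's mechanism is a deformation of the \emph{target}: it builds $\cM(\defY,\defsE)\to\AA^1$ over the deformation to the normal cone of $X$ in $Y$, whose generic fiber is $\cM(Y,E)$ and whose special fiber $\cM_0$ is an abelian cone over $\cM(X)$ (with an extra factor $H^0(f^*\E)$ coming from $N_{X/Y}\cong E|_X$) carrying a $\CC^*$-action. Deformation invariance of cosection-localized classes (Lemma \ref{lem:comparevc}, following \cite{CL18}) transports $[\cM(Y,E)]^{\vir}_{\loc}$ to $[\cM_0]^{\vir}_{\sigma_0}$, and cosection-localized torus localization \cite{CKL17} on $\cM_0$ produces the sign $(-1)^{\chi(E,\cM(X))}$ as $e(\E_0)e(\E_1^\vee)/(e(\E_1)e(\E_0^\vee))$ from a two-term resolution of $R\pi_*f^*\E$ and Serre duality. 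Without introducing such a degeneration (or an equivalent device), your proposed virtual-pullback comparison does not close.
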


\begin{remark}
  The equality \eqref{eq:thma} is a direct application of
  \cite[Thm~1.1]{KiLi13}, though we take a simplified approach to defining cosection localized virtual fundamental classes that applies when the cosection is defined everywhere (see Appendix \ref{sec:cosection-localization}).
  The bulk of this paper is devoted to the construction of
  $\cM(Y, E)$, its perfect obstruction theory, and cosection; and to
  proving equation \eqref{eq:thmb}.
\end{remark}

\subsection{Applications}

Theorem \ref{thm:mainthm} applies to both stable maps and
$\epsilon$-stable quasimaps.
For stable maps, take $Y$ to be a smooth projective Deligne--Mumford
stack.
Let $E$ be a vector bundle on $Y$ with a regular section whose zero
locus $X$ is smooth.
For nonnegative integers $g, n$ and a class
$\beta \in H_2(\underline{Y})$, where $\underline{Y}$ is the coarse
moduli space of $Y$, let $\Mbar_{g, n}(Y, \beta)$ denote the moduli
stack of twisted stable maps defined in \cite{AV02}.
We set $\cM(Y) = \Mbar_{g, n}(Y, \beta)$.
Furthermore, let $\Mbar_{g,n}(X, \beta)$ be the substack of maps that
factor through $X$; this is the disjoint union of
$\Mbar_{g,n}(X, \beta')$ for all $\beta' \in H_2(\underline X)$ such
that $\iota_* \beta' = \beta$ where
$\iota\colon \underline X \to \underline Y$ is the inclusion.
Then we get the following corollary (Section \ref{sec:stablemaps}).
\begin{corollary}
  \label{cor:stablemaps}
  We have the following identity of virtual classes
  \begin{equation*}
    [\cM(Y, E)]^\vir_\loc = \sum_{\beta'\colon \iota_* \beta' = \beta} (-1)^{\chi(E, \;\Mbar_{g,n}(X, \beta'))}[\Mbar_{g,n}(X, \beta')]^\vir \quad \quad \text{in} \; A_*(\Mbar_{g,n}(X, \beta)).
  \end{equation*}
\end{corollary}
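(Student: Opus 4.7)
The plan is to derive the corollary as a direct specialization of Theorem \ref{thm:mainthm}, so the work consists of verifying the four hypotheses in Section \ref{sec:introdefs} for $\cM(Y) = \Mbar_{g,n}(Y,\beta)$ and identifying the factoring locus $\cM(X)$ explicitly.

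First I would check the setup conditions. Since $Y$ itself is smooth projective and Deligne--Mumford, we may take $Y^0 = Y$, and condition (3) becomes vacuous. Condition (1) is the well-known separatedness, Deligne--Mumfordness, and finite type of $\Mbar_{g,n}(Y,\beta)$ from \cite{AV02}. Condition (4) holds because the universal curve over $\Mbar_{g,n}(Y,\beta)$ admits a relatively ample line bundle built from the relative dualizing sheaf, the marked points, and the pullback of an ample line bundle on the coarse moduli $\underline Y$. Condition (2) is the statement that the canonical obstruction theory \eqref{eq:candidatepot} built from $R\pi_*f^*\TT_Y$ (and its analogue for $X$) agrees with the standard Behrend--Fantechi perfect obstruction theory on moduli of stable maps, which is standard. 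The remaining hypotheses of Theorem \ref{thm:mainthm} — that $s$ is a regular section and $X\cap Y^0 = X$ is smooth — are assumed in the statement of Corollary \ref{cor:stablemaps}.

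Next I would identify $\cM(X)$ with $\bigsqcup_{\beta'\colon \iota_*\beta' = \beta}\Mbar_{g,n}(X,\beta')$. By definition $\cM(X) = \cM(Y)\times_{\Hom_{\Mgnt}(\Ct, Y\times\Mgnt)}\Hom_{\Mgnt}(\Ct,X\times\Mgnt)$, whose objects are stable maps $f\colon C\to Y$ factoring scheme-theoretically through the closed embedding $X\hookrightarrow Y$. Such an $f$ is equivalent to a stable map to $X$; the possible curve classes on $\underline X$ are precisely those $\beta'$ with $\iota_*\beta'=\beta$, giving the asserted disjoint union decomposition. Moreover, the canonical obstruction theory on $\cM(X)$, built from $R\pi_*f^*\TT_X$, restricts on each component to the standard obstruction theory of $\Mbar_{g,n}(X,\beta')$, and hence
\[
[\cM(X)]^\vir \;=\; \sum_{\beta'\colon \iota_*\beta'=\beta}\bigl[\Mbar_{g,n}(X,\beta')\bigr]^\vir \in A_*(\cM(X)).
\]

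Finally I would match the sign function. The locally constant function $\chi(E,\cM(X))$ computes $\chi(f^*\E)$ at a closed point $[f]\in\cM(X)$, which depends only on the component of $\cM(X)$ containing $[f]$; on $\Mbar_{g,n}(X,\beta')$ it agrees with $\chi(E,\Mbar_{g,n}(X,\beta'))$ by flatness of the universal curve. Substituting this and the decomposition of $[\cM(X)]^\vir$ into \eqref{eq:thmb} yields the corollary. The only step that is not purely formal is the compatibility of the two obstruction theories in condition (2), which I would treat once and for all in Section \ref{sec:stablemaps}; everything else is bookkeeping about the decomposition of $\cM(X)$ by curve class.
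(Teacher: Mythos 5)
Your proposal is correct and follows essentially the same route as the paper: verify the hypotheses of Section \ref{sec:introdefs} for $\Mbar_{g,n}(Y,\beta)$ (the paper cites Lemma \ref{lem:fibered_sections} and \cite[Section 4.5]{AGV08} for condition (2), where you appeal to the standard comparison with the Behrend--Fantechi theory), decompose $\cM(X)$ into the open-and-closed pieces $\Mbar_{g,n}(X,\beta')$ indexed by classes pushing forward to $\beta$, and then split the sign $(-1)^{\chi(E,\cM(X))}$ and the virtual class of \eqref{eq:thmb} componentwise. Your explicit verification of conditions (3) and (4) is slightly more thorough than the paper's proof, which leaves these implicit, but the argument is the same.
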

\begin{remark}
  We have
  \begin{equation*}
    A_*(\Mbar_{g,n}(X, \beta))
    = \bigoplus_{\beta'\colon \iota_* \beta' = \beta} A_*(\Mbar_{g,n}(X, \beta')),
  \end{equation*}
  and hence the identity allows one to recover each
  $[\Mbar_{g,n}(X, \beta')]^\vir$ from $[\cM(Y, E)]^\vir_\loc$.
  In many situations, ordinary Lefschetz implies that $\iota_*$ is an
  isomorphism, so that the direct sum has exactly one term.

\end{remark}

In this stable map setup, we can also derive a weak version of the quantum Lefschetz theorem from Theorem \ref{thm:mainthm}. Recall that $\E$ is \textit{convex} if for every closed point
$[f\colon C \to Y] \in \cM(Y)$, we have
\begin{equation*}
  H^1(C, f^* \E) = 0.
\end{equation*}
Let $f\colon C \to Y$ denote the universal map on the universal
  curve $\pi\colon C \to \cM(Y)$ and let $\iota\colon \cM(X) \rightarrow \cM(Y)$ denote the inclusion. The strongest version of quantum Lefschetz \cite{KKP03} says that when $\E$ is convex, we have
\begin{equation}\label{eq:stronglef}[\cM(X)]^\vir= \iota^![\cM(Y)]^\vir\end{equation}
where $\iota^!$ is the Gysin pullback. Theorem \ref{thm:mainthm} implies the following weaker version of this statement (Section \ref{sec:lefschetz}).
\begin{theorem}
  \label{thm:lefschetz}
  If $\E$ is convex, then
the direct image sheaf $R^0 \pi_* f^* \E$ is locally free, and
  \begin{equation*}
    \iota_* [\cM(X)]^\vir = e(R^0 \pi_* f^* \E) \cap [\cM(Y)]^\vir \qquad \text{in }A_*(\cM(Y)).
  \end{equation*}
\end{theorem}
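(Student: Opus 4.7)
The plan is to deduce Theorem \ref{thm:lefschetz} from Theorem \ref{thm:mainthm} by computing both sides of \eqref{eq:thmb} explicitly in the convex setting. First, using cohomology and base change together with the fiberwise vanishing $H^1(C, f^*\E) = 0$, I would verify that $V := R^0\pi_*f^*\E$ is locally free on $\cM(Y)$ with formation commuting with base change.

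Next I would analyze the $p$-field moduli $\cM(Y, E) \to \cM(Y)$ under convexity. By relative Serre duality on the universal curve $\pi\colon \Ct \to \cM(Y)$, the complex governing the $p$-field direction of the moduli problem is concentrated in a single cohomological degree, with the fiberwise $H^0$ vanishing and the fiberwise $H^1$ canonically identified with $V^\vee$. Consequently the forgetful map $\cM(Y, E) \to \cM(Y)$ is an isomorphism of stacks, while the canonical perfect obstruction theory \eqref{eq:candidatepot} on $\cM(Y, E)$ is obtained from that of $\cM(Y)$ by adjoining $V^\vee$ as a pure-obstruction summand. A standard computation with perfect obstruction theories extended by a free obstruction bundle then gives
\[
[\cM(Y, E)]^\vir = e(V^\vee) \cap [\cM(Y)]^\vir = (-1)^{\rk V}\, e(V) \cap [\cM(Y)]^\vir.
\]

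Applying Theorem \ref{thm:mainthm} and combining equations \eqref{eq:thma} and \eqref{eq:thmb} yields
\[
[\cM(Y, E)]^\vir = i_*[\cM(Y, E)]^\vir_\loc = (-1)^{\chi(E, \cM(X))}\, \iota_*[\cM(X)]^\vir.
\]
At any closed point $[f] \in \cM(X)$, convexity gives $\chi(E, \cM(X))([f]) = h^0(C, f^*\E) - h^1(C, f^*\E) = h^0(C, f^*\E) = \rk V$, so the two signs cancel and the desired identity $\iota_*[\cM(X)]^\vir = e(V) \cap [\cM(Y)]^\vir$ follows.

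The main obstacle is the second step: rigorously identifying the $p$-field summand of the canonical perfect obstruction theory \eqref{eq:candidatepot} with $V^\vee$ placed in cohomological degree one, and extracting the factorization of $[\cM(Y, E)]^\vir$ as a cap product with $e(V^\vee)$. Once this identification is established, the rest of the argument is a matter of bookkeeping of signs.
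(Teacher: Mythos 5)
Your proposal is correct and follows essentially the same route as the paper: establish that $\cM(Y,E)\to\cM(Y)$ is an isomorphism with relative obstruction theory the pure obstruction bundle $(R^0\pi_*f^*\E)^\vee$, deduce $[\cM(Y,E)]^\vir = (-1)^{\chi(E,\cM(Y))}e(R^0\pi_*f^*\E)\cap[\cM(Y)]^\vir$, and cancel this sign against the one in \eqref{eq:thmb} using $\chi(f^*\E)=h^0(f^*\E)$. The identification you flag as the main obstacle is exactly what the paper supplies via the compatibility triangle of Lemma \ref{lem:functoriality1} together with Manolache's virtual pullback (\cite[Theorem~4.8, Example~3.17]{Ma12}).
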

Theorem \ref{thm:lefschetz} is a consequence of the identity \eqref{eq:stronglef} after applying $\iota_*$ to both sides (note that the section $s$ induces a section $R^0 f^* s$ of $R^0 \pi_* f^* \E$
whose zero locus is $\cM(X) \subset \cM(Y)$).

\bigskip

A second application of Theorem \ref{thm:mainthm} is to take $Y=[W/G]$
where $W$ is an affine l.c.i. variety and $G$ is a reductive group
acting on $W$.
Choose a character $\theta$ of $G$ such that
$W^s_{\theta} = W^{ss}_{\theta}$ is smooth and nonempty and has finite
$G$-stabilizers.
Let $E$ be a $G$-equivariant vector bundle on $W$ with a
$G$-equivariant regular section whose zero locus $U$ has smooth
intersection with $W^s_{\theta}$.
This data defines a smooth Deligne--Mumford stack
$W\sslash_{\theta} G:= [W^s_{\theta}/G]$ carrying a vector bundle
induced by $E$ with a regular section whose zero locus
$U\sslash_{\theta}G := [(W^s_{\theta} \cap U)/G]$ is smooth.
Fix nonnegative integers $g, n$ and a positive rational number
$\epsilon$, and choose a class $\beta \in \Hom(\Pic^G(W), \QQ)$.
Let $\cM(Y) = \Mbar_{g,n}^\epsilon(W\sslash_{\theta}G, \beta)$ be the
moduli stack of $\epsilon$-stable quasimaps defined in \cite{CCK15}.
Let $\Mbar_{g,n}^\epsilon(U\sslash_{\theta}G, \beta)$ be the substack
where the quasi-map factors through $U$.
Then we have the following corollary (Section \ref{sec:quasimaps}).
\begin{corollary}
  \label{cor:quasimaps}
  We have the following identity of virtual classes
  \begin{equation*}
    [\cM(Y, E)]^\vir_\loc = \sum_{\substack{\beta' \in \Hom(\Pic^G(U), \QQ) \\ \iota_*\beta' = \beta}} (-1)^{\chi(E,\; \Mbar_{g,n}^\epsilon(U\sslash_{\theta}G, \beta))}[\Mbar_{g,n}^\epsilon(U\sslash_{\theta}G, \beta)]^\vir
  \end{equation*}
  in $A_*(\Mbar_{g,n}^\epsilon(U\sslash_{\theta}G, \beta))$, where
  $\iota_*$ is the dual of the map
  $\iota^*\colon \Pic^G(W) \to \Pic^G(U)$ induced by the inclusion
  $\iota\colon U \to W$.
\end{corollary}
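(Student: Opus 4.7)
The plan is to apply Theorem \ref{thm:mainthm} with the data $Y = [W/G]$, $Y^0 = [W^s_\theta/G]$, $X = [U/G]$, and $\cM(Y) = \Mbar_{g,n}^\epsilon(W\sslash_\theta G, \beta)$. The $G$-equivariant bundle $E$ and its regular section descend to a vector bundle on $Y$ with a regular section whose zero locus is $X$; by hypothesis $X \cap Y^0 = U\sslash_\theta G$ is smooth. The work consists of verifying conditions (1)--(4) of Section \ref{sec:introdefs} and of identifying the substack $\cM(X) \subset \cM(Y)$ with the claimed disjoint union of quasi-map moduli, after which Theorem \ref{thm:mainthm} gives the stated identity.

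For conditions (1)--(4): (1) is established in \cite{CCK15}, which also supplies the standard perfect obstruction theory for quasi-maps used to verify (2), modulo checking that it agrees with the canonical construction \eqref{eq:candidatepot}; (3) is the defining base-point condition for $\epsilon$-stable quasi-maps, which guarantees that $f^{-1}(Y^0) \subset C$ is the complement of finitely many points; and (4) follows from the standard quasi-map polarization, namely that a suitable tensor product of $f^*L_\theta$ (for $L_\theta$ the $\theta$-polarization on $W\sslash_\theta G$), the log dualizing sheaf of the universal curve, and high powers of the marked sections is relatively ample on the universal curve of $\cM(X)$.

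For the identification of $\cM(X)$: a closed point $[f\colon C \to Y] \in \cM(Y)$ factors through $X$ iff the corresponding principal $G$-bundle together with its equivariant map to $W$ takes values in $U$, and such data is precisely an $\epsilon$-stable quasi-map to $U\sslash_\theta G$. The class $\beta' \in \Hom(\Pic^G(U), \QQ)$ of this quasi-map pushes forward under $\iota_*$, the dual of $\iota^*\colon \Pic^G(W) \to \Pic^G(U)$, to the class of the original quasi-map, yielding the disjoint union decomposition $\cM(X) = \bigsqcup_{\iota_*\beta' = \beta} \Mbar_{g,n}^\epsilon(U\sslash_\theta G, \beta')$. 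Combining this with \eqref{eq:thmb} produces the claimed sum, with the locally constant sign $(-1)^{\chi(E, \cdot)}$ taking the indicated value on each component.

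The main obstacle is the verification of condition (2): one must identify the relative obstruction theory of $\cM(Y)$ over $\Mgnt$ supplied by the quasi-map construction of \cite{CCK15} with the canonical one from \eqref{eq:candidatepot} in a manner compatible with the quotient presentation $Y = [W/G]$, and likewise for $\cM(X)$ with respect to $U\sslash_\theta G$. Once this compatibility is in hand, conditions (1), (3), (4) are essentially formal consequences of $\epsilon$-stability, and Theorem \ref{thm:mainthm} applies to conclude.
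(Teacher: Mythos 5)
Your proposal is correct and follows essentially the same route as the paper: verify conditions (1)--(4) of Section \ref{sec:introdefs} for $\epsilon$-stable quasimaps (the obstruction-theory comparison you flag as the main obstacle is exactly the content of Lemma \ref{lem:quasimapcompare} in the appendix), decompose $\cM(X)$ by curve classes $\beta'$ with $\iota_*\beta' = \beta$, and apply Theorem \ref{thm:mainthm} as in the stable-maps case. The only detail you gloss over is the identification $U^{ss}_\theta = U^s_\theta = U \cap W^{ss}_\theta$, which is needed both to make sense of $\Mbar_{g,n}^\epsilon(U\sslash_\theta G, \beta')$ and to see that $X \cap Y^0$ is the smooth locus required by the hypotheses of Theorem \ref{thm:mainthm}; the paper checks this via the chain of inclusions $(W^s_\theta \cap U) \subset U^s_\theta \subset U^{ss}_\theta \subset (W^{ss}_\theta \cap U)$.
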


\subsection{Contents of the paper}
Our proof of Theorem \ref{thm:mainthm} follows roughly the strategy of \cite{CL12}. We construct (Section \ref{sec:target}) an auxiliary moduli space $\cM \rightarrow \Mgnt\times \AA^1$. This space is roughly analogous to $\cM(Y, E)$ but with $Y$ replaced by the deformation to the normal cone of $X$ in $Y$, and in fact a generic fiber of $\cM$ is isomorphic to $\cM(Y, E)$. We show that $\cM$ (resp.\ its fibers) is a Deligne--Mumford stack with the necessary properties (Section \ref{sec:target}) carrying a canonical perfect obstruction theory relative to $\Mgnt \times \AA^1$ (resp.\ $\Mgnt$; Section \ref{sec:pot}). 

The section $s$ induces a cosection $\sigma$ of the perfect obstruction theory on $\cM$ with degeneracy locus $\cM(X)\times \AA^1$; moreover $\sigma$ specializes to cosections on the fibers of $\cM$ with degeneracy loci $\cM(X)$ (Section \ref{sec:cosec}). A torus localization argument shows that the cosection localized virtual cycle on the special fiber is equal to the usual virtual cycle on $\cM(X)$, up to a sign (Section \ref{sec:specialfiber}), while the cosection localized class of the generic fiber is precisely the class $[\cM(Y, E)]^{\vir}_{\loc}$ of Theorem \ref{thm:mainthm}. 

Section \ref{sec:applications} elaborates on the main applications of our result, Appendix~\ref{sec:moduli-sections} contains several technical lemmas related to the moduli of sections, and Appendix~\ref{sec:cosection-localization} discusses a simplified construction of cosection-localized virtual cycles.

\subsection{Conventions}\label{sec:conventions}
By a sheaf on an algebraic stack $Y$ we mean a sheaf in the lisse-\'etale topos $Y_{lis-et}$, unless otherwise stated. By the derived category $D(Y)$ we mean the unbounded derived category of $\OO_Y$-modules in $Y_{lis-et}$ with quasi-coherent cohomology, as defined in \cite{HR17}. Our derived functors are the ones defined in (loc. cit.). 

If $\mathscr{P}$ is a principal $\CC^*$-bundle on an algebraic stack $X$ and $V$ is a $G$-stack, then $\mathscr{P}\times_{\CC^*} V$ is the quotient of $\mathscr{P} \times V$ by $\CC^*$, where $\CC^*$ acts with the diagonal action.

The universal objects on the moduli of sections $\Sec{Z}{\Ct}$ (or an open substack $\cM$) will usually be denoted $\pi_{\cM}\colon \C_{\cM} \rightarrow \cM$ for the universal curve and $\n_{\cM}\colon \C_{\cM} \rightarrow Z$ for the universal section. We will use $\omega_{\cM}$ to denote the relative dualizing sheaf of the morphism $\C \rightarrow \cM$. We will use $\omegabul_{\cM}=\omega_{\cM}[1]$ to denote the dualizing object in the derived category. If the subscript $\cM$ on any of these notations can be safely deduced from context we may omit it. One consistent exception to our convention is when $Z = \C \times Y$ for some algebraic stack $Y$. In this case $\Sec{\C \times Y}{\C}$ is canonically identified with the moduli of prestable maps to $Y$, and we use $f\colon \C_{\Sec{\C \times Y}{\C}} \rightarrow \C\times Y$ for the universal section.

\subsection{Acknowledgments}

The first author was partially supported by NSF grant DMS-1700682 and DMS-2001089. 
The second author was partially supported by an AMS--Simons travel grant and
NSF grants DMS-1901748 and DMS-1638352.
The third author was partially supported by an NSF Postdoctoral Research Fellowship, award number DMS-2002131. The third author would like to thank Tom Graber, Daniel Halpern-Leistner, Melissa Liu, and Martin Olsson for helpful discussions.
The authors thank Jonathan Wise, who shared with us several deformation arguments that greatly simplified the proof; Jack Hall, who provided a crucial lemma for the functoriality needed in Lemma \ref{lem:technical}; Bumsig Kim and Jeongseok Oh, who explained the proof of Lemma \ref{lem:global-res}; and Richard Thomas for suggestions for improving the introduction.
The authors are grateful to the AGNES conference and the Casa Matem\'atica Oaxaca which facilitated the completion of this project.

\section{Deformation to the normal cone}\label{sec:target}

\subsection{The family of targets}
Recall the notations defined in Section \ref{sec:introdefs}; in particular, $Y$ is an algebraic stack with locally free sheaf $\E$ and a global section $s$. We recall the definition of regularity for $s$.

\begin{definition}\label{def:regular-section}
The section $s$ is \textit{regular} if smooth affine locally, components of $s$ form a regular sequence.
\end{definition}

This definition makes sense when $Y$ is an algebraic stack because regularity is preserved by flat base change (see \cite[067P]{stacks-project}).

From now on we will assume that $s$ is a regular section and that $X \cap Y^0$ is smooth. Some parts of our construction do not require these assumptions, and we will indicate where this is the case.

Let $I_X$ be the ideal sheaf of $X$ in $Y$, and $J = (I_X, t)$ be the ideal sheaf of $X\times 0$ in $Y\times\AA^1$ where $t$ is the coordinate of $\AA^1$. Consider $\overline{\defY} \to Y\times \AA^1$ the blow-up of $Y\times\AA^1$ along the ideal $J$. Let $\defY$ be obtained by removing the proper transform of $Y\times 0$ from $\overline{\defY}$. Then we have a flat family of embeddings
\begin{equation}\label{eq:deformation}
\xymatrix{
X\times\AA^1 \ar[rr]^{\iota} \ar[rd]_{p_2} & & \defY \ar[ld]^{\rho_{\AA^1}}\\
& \AA^1 &
}
\end{equation}
Indeed, the family $\rho_{\AA^1}$ is the deformation to the normal cone of $X$ in $Y$ \cite[Section~5.1]{Fulton}. For any $c \in \AA^1$, denote by $\defY_c$ the fiber of $\rho_{\AA^1}$ over $c$. Then we have  
\begin{equation}\label{eq:target-fiber}
\defY_c \cong Y \ \ \mbox{for any } c \neq 0, \ \ \  \mbox{and} \ \ \  \defY_0 \cong N_{X/Y},
\end{equation} 
where $N_{X/Y}$ is the normal bundle to $X$ in $Y$. Note that $N_{X/Y}$ is a vector bundle over $X$ as the embedding $X \hookrightarrow Y$ is lci.

Note that the morphism $s^{\vee}\colon  \E^{\vee} \to \OO_Y$ factors through $I_X \subset \OO_Y$, hence induces a surjection 
\[
\E^{\vee}|_{Y\times \AA^1} \oplus \OO_{Y\times \AA^1} \to J.
\]
This defines a surjection of graded algebras
\[
\Sym^{\bullet}_{Y\times \AA^1} (\E^{\vee}|_{Y\times \AA^1} \oplus \OO_{Y\times \AA^1}) \to \Sym^\bullet_{Y\times\AA^1} J,
\]
hence a closed embedding
\begin{equation}\label{eq:target-embedding}
\PP(\E|_{Y\times \AA^1} \oplus \OO_{Y\times \AA^1}) \hookleftarrow \overline{\defY}.
\end{equation}
A local calculation shows that this embedding is regular
(see \cite[Lemma~2.1]{Aluffi}).
Furthermore, this embedding restricts to a regular embedding
$ \defY \to E|_{Y\times \AA^1} $.

\subsection{The superpotential}\label{sec:superpotential}

The pullback $s|_{\defY}$ vanishes along the fiber $\defY_0$, hence defines a section $s_\defY^-$ of $\defsE := \E_\defY(-\defY_0)$. The following lemma will be useful for computations later. 

\begin{lemma}\label{lem:computecosec}
The section $s_\defY^-$ of $\defsE$ is a regular section with zero locus $X \times \AA^1 \subset \defY$. Its restriction $s_\defY^-|_{\defY_0}$ is canonically identified with the tautological section of $\E|_{N_{X/Y}} \simeq \E|_{E|_X}$.
\end{lemma}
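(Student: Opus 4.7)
The plan is a direct local computation in four short steps.

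First, to see that $s_\defY^-$ is well-defined I would argue that $s_\defY := s|_\defY$ vanishes identically on $\defY_0$: by \eqref{eq:target-fiber} the composite $\defY_0 \simeq N_{X/Y} \to X \hookrightarrow Y$ factors through $X$, and $s|_X = 0$. Since $\defY_0 \subset \defY$ is an effective Cartier divisor, the inclusion $\defsE = \E_\defY(-\defY_0) \hookrightarrow \E_\defY$ identifies $\defsE$ with the subsheaf of $\E_\defY$ of sections vanishing along $\defY_0$, and therefore $s_\defY$ admits a unique lift $s_\defY^- \in H^0(\defsE)$.

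The remaining assertions are smooth-local on $Y$, so I would reduce to the model $Y = \Spec A$ with $E = \OO_Y^{\oplus r}$ trivial and $s = (s_1, \ldots, s_r)$ a regular sequence. A standard computation of the blow-up of $(s_1, \ldots, s_r, t)$ in $Y \times \AA^1$ shows that $\defY$ is covered by the single affine chart
\[
\defY = \Spec A[t, u_1, \ldots, u_r]/(s_i - t u_i)_{i=1}^{r},
\]
where $t$ is the $\AA^1$-coordinate and $u_i = s_i/t$ on the generic fibre; the other charts of the blow-up lie entirely in the proper transform of $Y \times 0$, which has been removed. The relations $s_i = tu_i$ then show that, in the trivialization $\defsE \simeq \OO_\defY^{\oplus r}$ induced by the isomorphism $\OO_\defY(-\defY_0) \xrightarrow{\sim} t \cdot \OO_\defY$, one has $s_\defY^- = (u_1, \ldots, u_r)$.

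From this explicit formula the claims follow routinely. The zero locus is cut out by $u_1 = \cdots = u_r = 0$, which reduces the chart relations to $s_1 = \cdots = s_r = 0$, giving $\Spec(A/(s_1, \ldots, s_r))[t] = X \times \AA^1$. For regularity, the two sequences $(u_1, \ldots, u_r, s_1 - tu_1, \ldots, s_r - tu_r)$ and $(u_1, \ldots, u_r, s_1, \ldots, s_r)$ generate the same ideal in $A[t, u_1, \ldots, u_r]$; since the second is a regular sequence of length $2r$ (both halves are regular and the total quotient $\Spec(A/(s))[t]$ has the expected codimension), after reordering in the local rings at points of $X \times \AA^1$ the sequence $(u_1, \ldots, u_r)$ is regular modulo $(s_i - t u_i)$. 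Finally, setting $t = 0$ in the chart identifies $\defY_0$ with $\Spec(A/(s))[u_1, \ldots, u_r]$, the total space of $N_{X/Y} \simeq E|_X$ with the $u_i$ as fibre coordinates, and $s_\defY^-|_{\defY_0} = (u_1, \ldots, u_r)$ is manifestly the tautological section.

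I expect the \emph{main obstacle} to be a bookkeeping matter rather than a conceptual one: one must verify that the local identification $\defsE|_{\defY_0} \simeq \E|_{E|_X}$ used above agrees with the canonical one coming from the deformation-to-the-normal-cone construction, not merely up to a global scalar. This amounts to matching the local trivialization of $\OO_\defY(-\defY_0)$ by the section $t$ with the standard identification of the exceptional divisor of the blow-up of $X \times 0 \subset Y \times \AA^1$ with $\PP(N_{X/Y} \oplus \OO_X)$, under which $\defY_0$ is the affine piece $N_{X/Y}$; after checking this compatibility, the global statement follows by smooth descent.
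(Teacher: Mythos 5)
Your proposal is correct and follows essentially the same route as the paper: both reduce smooth-locally to the case of an affine scheme with a trivialized bundle and a regular sequence, and then compute in the standard presentation of the deformation to the normal cone — your chart $\Spec A[t,u_1,\dots,u_r]/(s_i - t u_i)$ with $u_i = s_i/t$ is exactly the paper's extended Rees algebra with $u_i = a_i T^{-1}$, and the identifications of the zero locus with $X \times \AA^1$ and of the restriction to $\defY_0$ with the tautological section are the same. Your regularity argument (comparing the ideals $(u_i, s_i - tu_i)$ and $(u_i, s_i)$ and permuting in local rings) fills in a step the paper leaves as "straightforward to check," which is a welcome addition.
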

\begin{proof}
The deformation $\defY$ is constructed smooth locally, so we may assume that $Y = \Spec(A)$ is an affine scheme and $\E$ splits. Let $s \in \Gamma(Y, \E)$ be given by elements $(a_1, \ldots, a_r) \in A$; by Definition \ref{def:regular-section} this sequence is regular. By \cite[Sec~5.1]{Fulton}, $\defY$ is equal to $\Spec(S^\bullet_{(T)}),$ where $S^\bullet_{(T)}$ is the graded ring
\[
S^\bullet_{(T)} = \ldots \oplus I^2T^{-2} \oplus IT^{-1}\oplus A \oplus AT \oplus AT^2 \oplus \ldots.
\]
The ideal of $\defY_0$ is generated by $T$ in degree 1. So while $s|_\defY$ is given by the sequence $(a_1, \cdots, a_r)$, as a section of $\defsE$ it is given by $(a_1T^{-1},\cdots,a_rT^{-1})$. It is straightforward to check that this sequence is regular, and that it generates the ideal of $X \times \AA^1 \subset \defY$. 

Restricting to the fiber $T=0$, we get the coordinate ring
\[
A/I \oplus (I/I^2)T^{-1} \oplus (I^2/I^3)T^{-2}\oplus \ldots
\]
and our section is the sequence $(a_1T^{-1},\cdots a_rT^{-1})$ in degree 1 ($T$ has degree $-1$). Since the sequence is regular it identifies this ring with $Sym^\bullet (A^{\oplus r})$, and under this identification the sequence becomes the tautological one: $(e_1, \ldots, e_r)$ where $e_i$ is 1 in the $i^{th}$ coordinate and 0 elsewhere.
\end{proof}

Let $\defE$ be the vector bundle on $\defY$ whose sheaf of sections is $\defsE$. The dual of $s_\defY^-$ induces a morphism $W$ called the {\em super-potential}:
\begin{equation}\label{eq:family-super-potential}
W\colon (\defE)^{\vee} \to \CC.
\end{equation}
It is linear on the fibers of the vector bundle $(\defE)^\vee =  (E|_{\defY}(\defY_0))^\vee$---in other
words, letting $\CC^*$ act by scaling on both the source and target of
\eqref{eq:family-super-potential}, $W$ is equivariant. The next lemma says that we can study either the zeros of $s_{\defY}^-$ or the critical locus of $W$.

\begin{lemma}\label{lem:critical-loci}
 Suppose $Y$ is a smooth Deligne--Mumford stack and $E$ is a vector bundle on $Y$ with a regular section $s$ and zero locus $X \subset Y$. Let $W\colon E^\vee \rightarrow \CC$ be the function induced by the dual of $s$. Then $X$ is smooth if and only if the critical locus of $W$ equals $X$. 
\end{lemma}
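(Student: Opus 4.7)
The plan is to reduce to a local coordinate computation and identify both conditions with the fiberwise injectivity of the conormal map
\[
J \colon \E^\vee|_X \longrightarrow \Omega_Y|_X.
\]
Working \'etale-locally on $Y$, I trivialize $\E$ with frame $e_1, \ldots, e_r$ and write $s = \sum_i a_i\, e_i$ with $a_1, \ldots, a_r \in \OO_Y$ a regular sequence (by regularity of $s$). Let $p_1, \ldots, p_r$ be the dual fiber coordinates on the total space $E^\vee$. Then the superpotential becomes $W = \sum_i a_i\, p_i$, and
\[
dW \;=\; \sum_i a_i\, dp_i \;+\; \sum_i p_i\, da_i.
\]
At a point $(y, p) \in E^\vee$, vanishing of the fiber-direction summand forces $a_i(y) = 0$ for all $i$, i.e., $y \in X$. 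Vanishing of the horizontal summand then demands $\sum_i p_i\, da_i|_y = 0$ in $\Omega_Y|_y$. In particular, the zero section $X \hookrightarrow E^\vee$ is always contained in $\mathrm{Crit}(W)$, giving the inclusion $X \subseteq \mathrm{Crit}(W)$ with no further hypotheses.

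Since $s$ is regular, $X \hookrightarrow Y$ is a regular closed immersion of codimension $r$ whose conormal sheaf is canonically isomorphic to $\E^\vee|_X$. Under this identification, in the local coordinates above the conormal map $J$ sends $p$ to $\sum_i p_i\, da_i|_X$. Combined with the preceding paragraph, $\mathrm{Crit}(W)$ equals $X$ (viewed as the zero section over $X$) if and only if $J$ is injective at every point of $X$.

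To conclude, I invoke the conormal exact sequence
\[
\E^\vee|_X \xrightarrow{\;J\;} \Omega_Y|_X \longrightarrow \Omega_X \longrightarrow 0.
\]
Since $Y$ is smooth, $\Omega_Y|_X$ is locally free of rank $\dim Y$, and $\E^\vee|_X$ is locally free of rank $r$. Hence $J$ is injective at every point if and only if $\Omega_X$ is locally free of rank $\dim Y - r = \dim X$, i.e., if and only if $X$ is smooth. Combined with the previous step, this gives the equivalence. The only real subtleties are verifying the coordinate description of $J$ and identifying $X \subset E^\vee$ via the zero section; both are standard, and in fact the argument extends to scheme-theoretic equality $\mathrm{Crit}(W) = X$ by choosing local coordinates on $Y$ in which $a_1, \ldots, a_r$ form part of a regular system of parameters near a smooth point of $X$.
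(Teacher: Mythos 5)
Your proof is correct and follows essentially the same route as the paper's: reduce to an \'etale-local trivialization, compute $dW = \langle p, ds\rangle + \langle s, dp\rangle$, observe that $X \subseteq \mathrm{Crit}(W)$ unconditionally, and identify the reverse inclusion with pointwise linear independence of the $da_i$ along $X$. The only cosmetic difference is that the paper concludes via the Jacobian criterion directly, whereas you repackage the same condition as fiberwise injectivity of the conormal map and invoke the conormal exact sequence; both are the standard smoothness criterion for the lci $X \subset Y$.
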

\begin{proof}
  Both sides can be checked \'etale locally, and we may therefore
  assume that $Y$ is a smooth scheme, and that $E$ is trivial of
  rank $r$ on $Y$.
  If we write $s = (s_1, \dotsc, s_r)\colon Y \to \CC^r$,
  then the function $W:Y \times \CC^r \to \CC$ induced by $s$ is given
  by
  \begin{equation*}
    (y, p_1, \dotsc, p_r) \mapsto \sum_{i = 1}^r p_i \cdot s_i(y),
  \end{equation*}
  and has differential
  \begin{equation*}
    \sum_{i = 1}^r [dp_i \cdot s_i(y) + p_i d(s_i(y))].
  \end{equation*}

  Let $Z$ be the critical locus of $W$, which is given by the
  equations $s_i(y) = 0$, $p_i d(s_i(y)) = 0$ for all $i$.
  On the other hand, $X  \subset E^\vee$ is given by
  $s_i(y) = 0$, $p_i = 0$ for all $i$.
  Hence with no assumptions, we have $Z \subset X$.
  
  The other inclusion holds if and only if for every closed
  $y \in X$ the collection of vectors $\{d(s_i(y))\}_i$ are linearly independent. This amounts to saying that the Jacobian matrix of $(s_1, \dotsc, s_r)$ has rank $r$. In particular, $X$ is smooth of codimension $r$. 
\end{proof}

We apply this lemma as follows.
Let $\defY^0 \subset \defY$ denote the
 deformation to the normal cone of $X \cap Y^0$ in $Y^0$; observe that $\defY^0$ is smooth and Deligne--Mumford.
Let $E^0 = \defE \times_\defY \defY^0$ be the restriction
of $\defE$ to $\defY^0$ and let $W^0$ be the corresponding
restriction of \eqref{eq:family-super-potential}.
Let $Crit(W^0) \subset (E^0)^\vee$ be the critical locus of $W^0$.
By Lemmas \ref{lem:critical-loci} and \ref{lem:computecosec}, we have
\begin{equation}\label{eq:critical-locus}
Crit(W^0) = (X \cap Y^0)\times \AA^1
\end{equation} 
Let $\overline{Crit(W^0)}$ be the closure in $(E^0)^\vee$. Hence $\overline{Crit(W^0)} \subset X \times \AA^1$.

\subsection{The moduli}\label{ss:moduli}
Recall from Section \ref{sec:introdefs} that we defined the moduli of $p$-fields $\cM(Y,E)$ as a substack of the \textit{moduli of sections}. We found it convenient to work with these moduli throughout our paper, in particular for constructing perfect obstruction theories. The general construction is as follows.  
Consider a tower of algebraic stacks over $\CC$
\begin{equation}\label{eq:generalsetup}
Z \rightarrow \C\xrightarrow{\pi} \mathfrak{U}
\end{equation}
where $\pi\colon\C\rightarrow \mathfrak{U}$ is a flat finitely-presented family of connected, nodal, twisted curves and $Z\rightarrow \mathfrak{U}$ is locally finitely presented, quasi-separated, and has affine stabilizers. These technical conditions are imposed to guarantee the algebracity of hom-stacks below. We define the \textit{moduli of sections} $\Sec{Z}{\C}$ to be the stack whose fiber over $T \rightarrow \mathfrak{U}$ is \begin{equation}\label{eq:defsec}
\Sec{Z}{\C}(T) = \Hom_{{\C}}(\C\times_\mathfrak{U} T, Z) = \Hom_{{\C\times_\mathfrak{U} T}}(\C\times_\mathfrak{U} T, Z\times_\mathfrak{U} T).\end{equation}
By \cite[Thm~1.3]{HaRy19}, the stack $\Sec{Z}{\C}$ is algebraic and the canonical morphism $\Sec{Z}{\C}\rightarrow \mathfrak{U}$ is locally finitely presented, quasi-separated, and has affine stabilizers.

We first apply this construction to define a deformation of the space $\cM(Y, E)$. Recall the universal family of twisted curves $\Ct \to \Mgnt$ from Section~\ref{sec:introdefs}. Consider $\Mgnt_{\AA^1} = \Mgnt \times \AA^1$ with the universal curve $\Ct_{\AA^1} = \Ct\times \AA^1 \to \Mgnt_{\AA^1}$. Let $\omega$ be the relative dualizing sheaf of $\Ct_{\AA^1} \to \Mgnt_{\AA^1}$. On the algebraic stack $\defY$ denote by $\defsE$ the locally free sheaf $(\rho_Y^*\E)(-\defY_0)$. Define
\begin{equation}\label{eq:universal-target}
\defZ = Vb_{\Ct_{\AA^1} \times_{\AA^1} \defY}(\omega\otimes(\defsE)^\vee).
\end{equation}
The projections $\defZ \rightarrow \Ct_{\AA^1}\times_{\AA^1} \defY \rightarrow \Ct_{\AA^1}\times Y$ induce a morphism 
\begin{equation}\label{eq:proj-to-Y}
\Sec{\defZ}{\Ct_{\AA^1}} \rightarrow \Sec{\Ct_{\AA^1}\times Y}{\Ct_{\AA^1}} \cong \Sec{\Ct\times Y}{\Ct} \times \AA^1.
\end{equation}

\begin{definition}
The deformation moduli space of $p$-fields, denoted $\cM(\defY, \defsE)$, is the fiber product
\[
\cM(\defY, \defsE) = (\cM(Y)\times \AA^1) \times_{\Sec{\Ct_{\AA^1}\times Y}{\Ct_{\AA^1}}} \Sec{\defZ}{\Ct_{\AA^1}}.
\]
\end{definition}
Observe that $\cM(\defY, \defsE)$ is a stack over $\Mgnt_{\AA^1}$, and in particular has a canonical projection to $\AA^1$.
\begin{proposition}\label{prop:goodstack}
The stack $\cM(\defY, \defsE)$ is a separated Deligne--Mumford stack of finite type.
\end{proposition}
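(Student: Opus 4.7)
The plan is to study the canonical morphism
\[
\pi\colon \cM(\defY, \defsE) \to \cM(Y) \times \AA^1
\]
coming from the fiber-product definition, and to deduce the desired properties of $\cM(\defY, \defsE)$ from those of $\cM(Y) \times \AA^1$ (separated, Deligne--Mumford, of finite type by assumption~(1) of Section~\ref{sec:introdefs}) together with those of $\pi$ itself. The factorization $\defZ \to \Ct_{\AA^1} \times_{\AA^1} \defY \to \Ct_{\AA^1} \times Y$ of morphisms over $\Ct_{\AA^1}$ induces a factorization $\pi = \pi_2 \circ \pi_1$ with intermediate stack
\[
\cM(\defY) := (\cM(Y) \times \AA^1) \times_{\Sec{\Ct_{\AA^1} \times Y}{\Ct_{\AA^1}}} \Sec{\Ct_{\AA^1} \times_{\AA^1} \defY}{\Ct_{\AA^1}},
\]
whose $T$-points are families $(f\colon \C_T \to Y,\, t\colon T \to \AA^1)$ in $\cM(Y) \times \AA^1$ together with a lift $\tilde f\colon \C_T \to \defY$ of $(f, t)$; a $T$-point of $\cM(\defY, \defsE)$ then enhances this by a $p$-field, i.e.\ a global section of $\tilde f^*(\defsE)^\vee \otimes \omega$ on $\C_T$.

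For $\pi_1$: since $\defZ \to \Ct_{\AA^1} \times_{\AA^1} \defY$ is the total space of a vector bundle, $\pi_1$ is, after base change, the relative moduli of global sections of $\tilde f^*(\defsE)^\vee \otimes \omega$ on the universal curve over $\cM(\defY)$, and standard cohomology-and-base-change arguments (see Appendix~\ref{sec:moduli-sections}) show this is representable by an affine morphism of finite type. For $\pi_2$: the regular closed embedding $\defY \hookrightarrow E|_{Y \times \AA^1}$ noted after \eqref{eq:target-embedding} realizes $\cM(\defY)$ as a closed substack of the relative moduli of global sections of $f^*E$ on the universal curve over $\cM(Y) \times \AA^1$; the latter is itself affine and of finite type over $\cM(Y) \times \AA^1$ by the same principle applied to the vector bundle $E$. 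Hence $\pi_2$ is also representable, affine, and of finite type.

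Composing, $\pi$ is representable, affine, and of finite type, and separatedness, the Deligne--Mumford property, and finite-typeness all descend along such a morphism from the target to the source, yielding the conclusion. The main technical input, and the only delicate step, is the representability and finite-typeness of the moduli of sections of a vector bundle over a family of prestable twisted curves; this rests on \cite[Thm~1.3]{HaRy19} combined with cohomology-and-base-change, as streamlined in Appendix~\ref{sec:moduli-sections}.
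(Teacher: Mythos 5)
Your proof is correct and follows essentially the same route as the paper's: both factor the projection to $\cM(Y)\times\AA^1$ through the intermediate moduli of maps to $\defY$, use the regular closed embedding $\defY\hookrightarrow E|_{Y\times\AA^1}$ of \eqref{eq:target-embedding} to handle that middle step, and invoke affineness and finite-typeness of the moduli of sections of a vector bundle over a family of twisted curves for the two outer steps (the paper cites \cite[Prop~2.2]{CL12} for this last point, and Lemma~\ref{lem:closed} for the closed-embedding step).
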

\begin{proof}
We have a sequence of morphisms
\[
\Sec{\defZ}{\Ct_{\AA^1}} \rightarrow \Sec{\Ct_{\AA^1}\times_{\AA^1}\defY}{\Ct_{\AA^1}} \rightarrow \Sec{\Ct_{\AA^1}\times \rho_Y^*E}{\Ct_{\AA^1}}\rightarrow \Sec{\Ct_{\AA^1}\times Y}{\Ct_{\AA^1}}
\]
Applying \cite[Prop~2.2]{CL12} for orbifold curves, the first and last are representable by affine schemes of finite type, hence in particular they are separated, finite type, and representable. The middle arrow is a closed embedding by Lemma \ref{lem:closed} since \eqref{eq:target-embedding} is so. Pulling back the above sequence along $\cM(Y)\times \AA^1 \to \Sec{\Ct_{\AA^1}\times Y}{\Ct_{\AA^1}}$, the assumption that $\cM(Y)$ is a separated Deligne--Mumford stack of finite type implies that $\cM(\defY, \defsE)$ has the same properties.
\end{proof}

\subsection{Specialization} \label{sec:special1}
Let $c \in \AA^1$. We compute the fiber $\cM_c$ of $\cM := \cM(\defY, \defsE)$ over $c$, first expressing it in terms of $\defZ_c$. The fiber diagram
\[\begin{tikzcd}
\defZ_c \arrow[r] \arrow[d] & \Ct \times\defY_c \arrow[r]\arrow[d] &
\Ct \times Y \arrow[r] \arrow[d] &
\Ct \arrow[r] \arrow[d] & \Mgnt\times\{c\} \arrow[d] \\
 \defZ \arrow[r]&
 \Ct_{\AA^1}\times_{\AA^1}\defY \arrow[r]&
 \Ct_{\AA^1}\times Y \arrow[r] &
 \Ct_{\AA^1} \arrow[r]&
\Mgnt\times \AA^1 
\end{tikzcd}
\]
induces the following fiber diagram with isomorphic horizontal arrows:
\[
\begin{tikzcd}
\Sec{\defZ}{\Ct_{\AA^1}}\times_{\Mgnt\times \AA^1} (\Mgnt\times\{c\}) \arrow[r] \arrow[d] & \Sec{\defZ_c}{\Ct} \arrow[d] \\
\Sec{\Ct_{\AA^1}\times Y}{\Ct_{\AA^1}}\times_{\Mgnt\times \AA^1} (\Mgnt\times\{c\}) \arrow[r] & \Sec{\Ct \times Y}{\Ct} 
\end{tikzcd}
\]
Pulling back this square over $\cM(Y) \subset \Sec{\Ct\times Y}{\Ct}$, we see that
\begin{equation}\label{eq:specialm}
\cM_c = \cM(Y) \times_{\Sec{\Ct\times Y}{\Ct}} \Sec{\defZ_c}{\Ct}.
\end{equation}
When $c \neq 0$, by \eqref{eq:target-fiber} we have $\cM_c = \cM(Y, E)$ the moduli constructed in Section~\ref{sec:introdefs}.

To simplify \eqref{eq:specialm} when $c=0$, we first compute the restriction:
\[
  \defsE|_{\defY_0} = \left( \E_{\defY}\right)|_{\defY_0}\otimes \left( \OO(-\defY_0)\right)|_{\defY_0} = \E|_{\defY_0} \otimes \left( \OO_{\AA^1}(-0)\right)|_{\defY_0} \cong \E|_{\defY_0},
\]
since $\defY_0$ is the fiber over the origin $0 \in \AA^1$.
We conclude that the special fiber $\cM_0$ is equal to
\begin{equation}\label{eq:specialm0}
\cM_0 = \cM(Y) \times_{\Sec{\Ct \times Y}{\Ct}} \Sec{Vb_{\Ct \times \defY_0}(\omega \otimes \E^\vee|_{\defY_0})}{\Ct} = \cM(X) \times_{\Sec{\Ct \times X}{\Ct}} \Sec{\defZ_0}{\Ct}
\end{equation}
where $\defZ_0 = Vb_{\Ct \times X}(\E_X \oplus \omega \otimes \E_X^\vee)$. The final equality uses that $\defY_0 = N_{X/Y} \cong \E|_X$ since differentiating the regular section $s$ induces a surjection $T_Y|_X \to \E|_X$ with kernel $T_X$.
In particular we have the following corollary to Proposition \ref{prop:goodstack}.
\begin{corollary}
  The stack $\cM(Y, E)$ is a separated Deligne--Mumford stack of finite type.
\end{corollary}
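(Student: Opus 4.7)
The plan is to deduce this corollary from Proposition~\ref{prop:goodstack} by identifying $\cM(Y,E)$ with a nonzero fiber of the deformation family $\cM := \cM(\defY, \defsE) \to \AA^1$ and invoking stability of the three properties under base change.

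First I would unpack the fiber formula \eqref{eq:specialm} at a point $c \in \AA^1 \setminus \{0\}$. By \eqref{eq:target-fiber} we have $\defY_c \cong Y$, and since the line bundle $\OO_\defY(-\defY_0)$ restricts trivially to $\defY_c$ for $c \neq 0$, we get
\begin{equation*}
\defsE|_{\defY_c} \;\cong\; \E_{\defY}|_{\defY_c} \;\cong\; \E.
\end{equation*}
Consequently $\defZ_c \cong \vb_{\Ct \times Y}(\omega \otimes \E^\vee) = \vb_{\Ct \times Y}(\E \boxtimes \omega^\vee)$, which is exactly the vector bundle $Z$ appearing in the definition of $\cM(Y,E)$ in Section~\ref{sec:introdefs}. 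Plugging this identification into \eqref{eq:specialm} yields
\begin{equation*}
\cM_c \;=\; \cM(Y) \times_{\Sec{\Ct\times Y}{\Ct}} \Sec{\defZ_c}{\Ct} \;=\; \cM(Y,E),
\end{equation*}
matching the fiber-product definition of $\cM(Y,E)$.

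Second I would observe that $\cM_c$ is the base change of $\cM \to \AA^1$ along the closed point $\Spec \CC \to \AA^1$ with image $c$. The properties of being separated, Deligne--Mumford, and of finite type are all preserved under arbitrary base change of algebraic stacks. Proposition~\ref{prop:goodstack} supplies all three of these properties for $\cM$, so they transfer to $\cM_c = \cM(Y,E)$.

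I do not expect any real obstacle: the deformation moduli $\cM(\defY, \defsE)$ was set up precisely so that its special fiber recovers $\cM(X)$-related geometry while its generic fiber recovers $\cM(Y,E)$, and Proposition~\ref{prop:goodstack} has already done all the representability and finiteness work uniformly over $\AA^1$. Alternatively, a direct proof can be obtained by copying the argument of Proposition~\ref{prop:goodstack} verbatim with $(\defY, \defsE, \Mgnt_{\AA^1}, \Ct_{\AA^1})$ replaced by $(Y, \E, \Mgnt, \Ct)$, but the specialization route is the most economical presentation.
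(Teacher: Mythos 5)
Your proposal is correct and follows exactly the paper's route: the corollary is stated as an immediate consequence of Proposition~\ref{prop:goodstack} via the identification $\cM_c = \cM(Y,E)$ for $c \neq 0$ established in \eqref{eq:specialm}, with the three properties passing to the fiber. Your closing alternative (rerunning the proof of Proposition~\ref{prop:goodstack} with $Y$ in place of $\defY$) is also precisely the paper's subsequent remark.
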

\begin{remark}
The proof of Proposition \ref{prop:goodstack} can also be used to directly show that $\cM(Y, E)$ is a separated Deligne--Mumford stack of finite type. In particular, no properties of $s$ are needed for this result.
\end{remark}

\section{The perfect obstruction theory}\label{sec:pot}

\subsection{A general set-up}
We now construct a ``candidate'' perfect obstruction theory for the moduli of sections defined in \eqref{eq:defsec}, in the general situation of \eqref{eq:generalsetup}. Consider the family $Z \rightarrow \C \rightarrow \mathfrak{U}$ in \eqref{eq:generalsetup}, and recall the notational conventions in \ref{sec:conventions}.

Let $\LL_{Z/\C}$ denote the relative cotangent complex of \cite{olsson07}. We have a morphism in the derived category of $\C_{Sec}$ 
\[
L\n^*\LL_{Z/\C} \rightarrow \LL_{\C_{Sec}/\C} \xleftarrow{\sim} \pi^*\LL_{\Sec{Z}{\C}/\mathfrak{U}}.
\]
Tensoring this morphism with $\omegabul_{Sec}$ and applying $R\pi_*$, we obtain
\[
R\pi_*(L\n^*\LL_{Z/\C} \otimes \omegabul_{Sec}) \rightarrow R\pi_*(\pi^*\LL_{\Sec{Z}{\C}/\mathfrak{U}} \otimes \omegabul_{Sec}).
\]
Since $\pi^!\bullet = \omegabul_{Sec} \otimes \pi^*\bullet$ is right adjoint to $R\pi_*$, we obtain
\[
R\pi_*(\pi^*\LL_{\Sec{Z}{\C}/\mathfrak{U}} \otimes \omegabul_{Sec}) = R\pi_*\pi^!\LL_{\Sec{Z}{\C}/\mathfrak{U}} \to \LL_{\Sec{Z}{\C}/\mathfrak{U}} 
\]
hence a morphism
\begin{equation}\label{eq:candidatepot}
\phi_{\Sec{Z}{\C}/\mathfrak{U}}\colon \EE^\bullet_{\Sec{Z}{\C}/\mathfrak{U}}:= R\pi_*(L\n^*\LL_{Z/\C} \otimes \omegabul_{Sec}) \rightarrow \LL_{\Sec{Z}{\C}/\mathfrak{U}}.
\end{equation}

Applying the construction of \eqref{eq:candidatepot} to $Z = \Ct\times Y \to \Ct \to \Mgnt$, we obtain 
\begin{equation}\label{eq:initialpotY}
\phi_{Y} \colon \EE_{\cM(Y)/\Mgnt} := R\pi_*(L f^*\LL_{Y}\otimes\omegabul) \longrightarrow  \LL_{\cM(Y)/\Mgnt}
\end{equation}
where $f\colon \C \to Y$ is the composition of the universal section with the projection $\Ct \times Y \rightarrow Y$. By assumption, $\phi_{Y}$ is a perfect obstruction theory for $\cM(Y) \to \Mgnt$. Similarly replacing $Y$ by $X$, we obtain 
\begin{equation}\label{eq:initialpotX}
\phi_{X} \colon \EE_{\cM(X)/\Mgnt} := R\pi_*(L f^*\LL_{X}\otimes\omegabul) \longrightarrow  \LL_{\cM(X)/\Mgnt}
\end{equation}
which we assume is a perfect obstruction theory of $\cM(X) \to \Mgnt$.

\subsection{The family of perfect obstruction theories}
We investigate some situations when \eqref{eq:candidatepot} is a (perfect) obstruction theory in the sense of \cite{BF97}. Consider the following commutative diagram

\begin{equation}\label{eq:allmoduli}
\xymatrix{
&& && \defZ \ar[d]^q & \\
&& f^{-1}\defZ \ar[d] \ar[urr] && \Ct_{\AA^1}\times Y \ar[d] &\\
\C_{\cM(\defY, \defsE)} \ar[urr]^{\tilde\n} \ar@/^2pc/[uurrrr]^{\n} \ar[rr]^{\mu} \ar[d]^{\pi} && \C_{\cM(Y)_{\AA^1}} \ar[rr] \ar[d] \ar[urr]^{\m} && \Ct_{\AA^1} \ar[d] & \\
\cM(\defY, \defsE) \ar[rr]^{p} &&\cM(Y)_{\AA^1} \ar[rr] && \Mgnt_{\AA^1} \ar[r] & \AA^1\\
}
\end{equation}
where the two bottom squares are cartesian, $\m$ is the universal map of $\cM(Y)$, and $\n$ is the universal map of $\cM(\defY, \defsE)$.  Applying \eqref{eq:candidatepot} to $\cM := \cM(\defY, \defsE) \to \Mgnt_{\AA^1}$, we obtain 
\begin{equation}\label{eq:family-pot}
\phi \colon \EE_{\cM/\Mgnt_{\AA^1}} := R\pi_*(L\n^*\LL_{\defZ/\Ct_{\AA^1}} \otimes \omegabul_{\cM(\defY, \defsE)})  \longrightarrow  \LL_{\cM(\defY, \defsE)/\Mgnt_{\AA^1}}.
\end{equation}

For later use, we describe the fibers of $\phi$ over $\AA^1$ as follows.
Let $c$ be a closed point in $\AA^1$ and let $\iota_c\colon \cM_c \rightarrow \cM(\defY, \defsE)$ be the fiber of $\cM(\defY, \defsE) \to \AA^1$ over $c$ as in Section \ref{sec:special1}. The \textit{specialization} $\phi_c$ of $\phi$ to $\cM_c$ is the composition
\[
\phi_c\colon \EE_{\cM_c/\Mgnt} := \iota_c^*\EE_{\AA^1} \xrightarrow{\iota_c^*\phi} \iota_c^*\LL_{\cM(\defY, \defsE)/\Mgnt_{\AA^1}} \longrightarrow \LL_{\cM_c/\Mgnt}
\]
where the second arrow is the canonical one. By Lemma \ref{lem:functoriality2}, since $\defY \rightarrow \AA^1$ is flat, the morphism $\phi_c$ is isomorphic to the one constructed by applying \eqref{eq:candidatepot} to the fiber of \eqref{eq:allmoduli} over $c \in \AA^1$. That is, we have a commuting diagram
\begin{equation}\label{eq:specialpot}
\begin{tikzcd}
\EE_{\cM_c/\Mgnt} \arrow[d, "\sim"] \arrow[r, "\phi_c"] & \LL_{\cM_c/\Mgnt} \arrow[d, equal]\\
 R\pi_*(L\n_c^*\LL_{\defZ_c/\Ct} \otimes \omegabul_{\cM_c})  \arrow[r]  &\LL_{\cM_c/\Mgnt}
\end{tikzcd}
\end{equation}
We will also refer to the bottom arrow as $\phi_c$.

The main result of this section is the following:

\begin{proposition}\label{prop:defspace} The morphism $\phi$ (resp. $\phi_c$) defines a perfect obstruction theory for $\cM(\defY, \defsE) \to \Mgnt_{\AA^1}$ (resp. $\cM_c \to \Mgnt$).
\end{proposition}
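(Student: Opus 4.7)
The plan is to deduce the statement for $\phi$ from the hypothesis that $\phi_Y$ is a perfect obstruction theory, using the factorization $\cM(\defY, \defsE) \xrightarrow{p} \cM(Y)_{\AA^1} \to \Mgnt_{\AA^1}$ visible in diagram \eqref{eq:allmoduli}, and then to deduce the statement for $\phi_c$ by specialization via \eqref{eq:specialpot}. Explicitly, the distinguished triangle of cotangent complexes for the composition $\defZ \to \Ct_{\AA^1}\times Y \to \Ct_{\AA^1}$, pulled back along $\n$ and pushed forward by $R\pi_*(-\otimes \omegabul_\cM)$, fits into a commutative diagram of distinguished triangles
\[
\begin{tikzcd}
L p^*\EE_{\cM(Y)_{\AA^1}/\Mgnt_{\AA^1}} \ar[r] \ar[d, "L p^*\phi_Y"'] & \EE_{\cM/\Mgnt_{\AA^1}} \ar[r] \ar[d, "\phi"] & \EE^{\mathrm{rel}} \ar[r] \ar[d, "\phi^{\mathrm{rel}}"] & {[1]} \\
L p^*\LL_{\cM(Y)_{\AA^1}/\Mgnt_{\AA^1}} \ar[r] & \LL_{\cM/\Mgnt_{\AA^1}} \ar[r] & \LL_{\cM/\cM(Y)_{\AA^1}} \ar[r] & {[1]}
\end{tikzcd}
\]
where $\EE^{\mathrm{rel}} := R\pi_*(L\n^*\LL_{\defZ/\Ct_{\AA^1}\times Y} \otimes \omegabul_\cM)$. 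Since $\phi_Y$ is a perfect obstruction theory by assumption, $L p^*\phi_Y$ is one for $\cM(Y)_{\AA^1}/\Mgnt_{\AA^1}$, so via the long exact sequence of cohomology sheaves it suffices to prove that $\phi^{\mathrm{rel}}$ is a perfect obstruction theory for $p$.

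For the relative statement, I would exploit the factorization $\defZ \to \Ct_{\AA^1}\times_{\AA^1}\defY \to \Ct_{\AA^1}\times Y$. The first arrow is a vector bundle (with sheaf of sections $\omega \otimes \defsE^\vee$), while by \eqref{eq:target-embedding} the second factors as a regular embedding into $E|_{Y\times \AA^1}$ followed by a smooth morphism to $Y$. Consequently $\LL_{\defZ/\Ct_{\AA^1}\times Y}$ is perfect of amplitude in $[-1,0]$, and relative Serre duality for the family of twisted nodal curves $\pi$ shows that $\EE^{\mathrm{rel}}$ is perfect of amplitude in $[-1,0]$ as well. The obstruction-theoretic property of $\phi^{\mathrm{rel}}$ then follows from the standard deformation theory of sections: at a closed point $[\n_0]$ of $\cM$, the cohomology groups of $\n_0^* T_{\defZ/\Ct_{\AA^1}\times Y}$ on $\C_{\n_0}$ compute the tangent and obstruction spaces to deforming $\n_0$ along $p$, and these match $h^0$ and $h^{-1}$ of $(\phi^{\mathrm{rel}})^\vee$ after dualizing.

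For the statement about $\phi_c$, diagram \eqref{eq:specialpot} together with Lemma \ref{lem:functoriality2} identifies $\phi_c$ with the candidate POT built directly from the fiber diagram over $c \in \AA^1$. When $c \neq 0$, we have $\defY_c = Y$ and the factorization $\cM_c \to \cM(Y) \to \Mgnt$ is handled by the argument above, using the hypothesis on $\phi_Y$. When $c = 0$, by \eqref{eq:specialm0} the relevant factorization becomes $\cM_0 \to \cM(X) \to \Mgnt$, and the same argument applies using the assumed POT $\phi_X$ together with the description $\defZ_0 = \Vb_{\Ct \times X}(\E_X \oplus \omega \otimes \E_X^\vee)$ and the fact that $\defY_0 \cong E|_X$ is smooth over $X$.

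The main obstacle is likely the rigorous verification of the obstruction-theory condition for $\phi^{\mathrm{rel}}$, not its perfectness: while the identification of cohomology with deformation and obstruction spaces is standard for smooth targets, here one must handle the genuinely two-term contribution of $\LL_{\defY/Y}$ carefully, owing to the non-smoothness of the deformation-to-the-normal-cone map. I expect this to be cleanly addressed via the base change and functoriality results developed in Appendix~\ref{sec:moduli-sections}.
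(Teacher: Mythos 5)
Your reduction of the obstruction-theory condition to the relative piece $\phi^{\mathrm{rel}}$ via the morphism of distinguished triangles and the five lemma is exactly the paper's strategy, and your instinct that the deformation theory of sections is the delicate point there is sound (the paper handles it with a square-zero-extension lemma relying on Olsson's results, since $\LL_{\defZ/\Ct_{\AA^1}\times Y}$ is not a cotangent complex of ringed topoi). The genuine gap is in the perfectness argument. You assert that $\EE^{\mathrm{rel}} = R\pi_*(L\n^*\LL_{\defZ/\Ct_{\AA^1}\times Y}\otimes\omegabul)$ is perfect of amplitude $[-1,0]$; this is false in general. Since $\LL_{\defZ/\Ct_{\AA^1}\times Y}$ is perfect in $[-1,0]$ and $\omegabul=\omega[1]$, the tensor product sits in $[-2,-1]$ and pushforward along a family of curves only lands in $[-2,0]$. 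The problematic group is $h^{-2}(\EE^{\mathrm{rel}}) = R^0\pi_*\bigl(h^{-1}(L\n^*\LL_{\defZ/\Ct_{\AA^1}\times Y})\otimes\omega\bigr)$, and it does not vanish: over $t=0$ the map $\defY\to Y\times\AA^1$ is not smooth, so for a curve mapping into $\defZ_0$ the sheaf $h^{-1}(L\n^*\LL_{\defZ/\Ct_{\AA^1}\times Y})$ contains a locally free summand (coming from the conormal of the regular embedding $\defY\hookrightarrow E|_{Y\times\AA^1}$), and $R^0\pi_*$ of its twist by $\omega$ is generically nonzero. Relative Serre duality does not repair this. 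Consequently your triangle only yields that $\EE_{\cM/\Mgnt_{\AA^1}}$ is perfect in $[-2,0]$.

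The missing step is a separate proof that $h^2(\EE^{\vee}_{\cM/\Mgnt_{\AA^1}})=0$. The paper establishes this by restricting to the fibers over $\AA^1$ (flatness plus tor-independent base change reduce the vanishing to the fibers) and showing each $\EE_{\cM_c/\Mgnt}$ is perfect in $[-1,0]$ using a \emph{different} factorization: at $c=0$ one factors $\defZ_0\to\Ct\times X\to\Ct$, so the relative cotangent complex is a vector bundle in degree $0$ and the relative contribution is perfect in $[0,1]$ after rotation, whence $\EE_{\cM_0/\Mgnt}$ inherits amplitude $[-1,0]$ from the assumed perfect obstruction theory on $\cM(X)$. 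You invoke precisely this factorization when treating $\phi_0$, but you do not feed it back into the perfectness of the family complex, which is where it is actually needed; also note the paper deduces the $\phi_c$ statement from the family statement via \cite[Theorem~7.2]{BF97} rather than reproving it on each fiber. With the fiberwise $h^2$-vanishing argument inserted, your proof matches the paper's.
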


\subsection{Proof of Proposition \ref{prop:defspace}}

By \cite[Theorem~7.2]{BF97}, it suffices to show that $\phi$ is a perfect obstruction theory. We first show that $\phi$ defines an obstruction theory of $\cM(\defY, \defsE) \to \Mgnt_{\AA^1}$ in the sense of \cite[Definition 4.4]{BF97}. 

The composition $\defZ \to \Ct_{\AA^1}\times Y \to \Ct_{\AA^1}$ as in \eqref{eq:allmoduli} induces a triangle of cotangent complexes
\[
Lq^*\LL_{\Ct_{\AA^1}\times Y/\Ct_{\AA^1}}|_{\defZ} \longrightarrow \LL_{\defZ/\Ct_{\AA^1}}|_{\defZ} \longrightarrow \LL_{\defZ/\Ct_{\AA^1}\times Y} \stackrel{[1]}{\longrightarrow}.
\]
By Lemma \ref{lem:functoriality1} we have a morphism of distinguished triangles
\begin{equation}\label{eq:proofpot}
\begin{tikzcd}
 p^*\EE_{\cM(Y)_{\AA^1}/\Mgnt_{\AA^1}}\arrow[r]\arrow[d]& \EE_{\cM/\Mgnt_{\AA^1}} \arrow[r]\arrow[d, "\phi"]&R\pi_*(\n^*\LL_{\defZ/(\Ct_{\AA^1} \times Y)}\otimes \omegabul) \arrow[r]\arrow[d, "\phi_{rel}"]&{}\\
 p^*\LL_{\cM(Y)_{\AA^1}/\Mgnt_{\AA^1}}\arrow[r]  & \LL_{\cM(\defY, \defsE)/\Mgnt_{\AA^1}} \arrow[r] &\LL_{\cM(\defY, \defsE)/\cM(Y)_{\AA^1}}  \arrow[r] & {}
\end{tikzcd}
\end{equation}
We claim that the left and right vertical maps have the property that
$h^0$ is an isomorphism and $h^{-1}$ is surjective.
Granting this, applying the five lemma to the long exact sequence of
cohomology shows that the middle vertical map does as well. 
The claim on the leftmost arrow holds since we have assumed that \eqref{eq:candidatepot} defines an obstruction theory on $\cM(Y)$ over $\Mgnt$, so by Lemma \ref{lem:basechange2} the morphism \eqref{eq:candidatepot} also defines an obstruction theory on $\cM(Y)_{\AA^1}$ over $\Mgnt_{\AA^1}$ (in this case, both horizontal arrows in \eqref{eq:functoriality2} are quasi-isomorphisms). The claim on the rightmost arrow is the content of the following lemma.

\begin{lemma}\label{lem:proofopt}
The morphism $\phi_{rel}$ is an obstruction theory in the sense of \cite[Definition 4.4]{BF97}.
\end{lemma}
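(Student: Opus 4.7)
The plan is to verify the deformation-theoretic criterion of \cite[Theorem~4.5]{BF97} for $\phi_{rel}$: for every square-zero extension $T \hookrightarrow \bar T$ over $\cM(Y)_{\AA^1}$ with ideal sheaf $J$ and every morphism $g\colon T \to \cM(\defY, \defsE)$ over $\cM(Y)_{\AA^1}$, I must exhibit a canonical obstruction class $\mathrm{ob}(g,\bar T) \in \Ext^1_T(g^*\EE_{rel}, J)$, functorial in $(T,\bar T,g)$, whose image under $\phi_{rel}^*$ is the standard obstruction $\omega_g \in \Ext^1_T(g^*\LL_{\cM(\defY, \defsE)/\cM(Y)_{\AA^1}}, J)$ to extending $g$ to $\bar T$; and, when $\mathrm{ob}(g,\bar T)=0$, the set of lifts must form a torsor under $\Hom_T(g^*\EE_{rel}, J)$. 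Here $\EE_{rel}$ denotes the domain of $\phi_{rel}$.

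By the universal property of the moduli of sections, giving $g$ over $\cM(Y)_{\AA^1}$ is equivalent to giving a $\Ct_{\AA^1}\times Y$-morphism $\n_T\colon \C_T \to \defZ$ on the universal curve over $T$, and lifts of $g$ are in bijection with $\Ct_{\AA^1}\times Y$-extensions $\n_{\bar T}\colon \C_{\bar T} \to \defZ$ of $\n_T$ compatible with the base-map extension fixed by $\bar T \to \cM(Y)_{\AA^1}$. By Illusie's deformation theory, the obstruction $\omega_{\n_T}$ to such an extension lies in $\Ext^1_{\C_T}(L\n_T^*\LL_{\defZ/(\Ct_{\AA^1}\times Y)}, \pi_T^*J)$, and lifts form a torsor under $\Hom_{\C_T}(L\n_T^*\LL_{\defZ/(\Ct_{\AA^1}\times Y)}, \pi_T^*J)$ when this obstruction vanishes.

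Next I will apply Grothendieck--Verdier duality for the proper, flat, Gorenstein morphism $\pi_T\colon \C_T \to T$ with invertible dualizing complex $\omegabul_T$. Since $\pi_T^! J = \omegabul_T \otimes \pi_T^*J$, setting $\mathcal{F} := L\n_T^*\LL_{\defZ/(\Ct_{\AA^1}\times Y)}$ yields
\[
\Ext^i_{\C_T}(\mathcal{F}, \pi_T^*J) \cong \Ext^i_{\C_T}(\mathcal{F}\otimes \omegabul_T, \pi_T^! J) \cong \Ext^i_T(R\pi_{T,*}(\mathcal{F}\otimes \omegabul_T), J) = \Ext^i_T(g^*\EE_{rel}, J),
\]
which transports the obstruction class and torsor structure to the $\EE_{rel}$ side. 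The key compatibility is that, by inspection of the construction of $\phi_{rel}$ as the third vertical in \eqref{eq:proofpot}, the Grothendieck--Verdier adjoint of $\phi_{rel}$ is precisely the canonical morphism $\mathcal{F} \to \pi_T^*\LL_{\cM(\defY, \defsE)/\cM(Y)_{\AA^1}}|_{\C_T}$ arising from the factorization of $\n_T$ through $\C_T \to \C_{\cM(\defY, \defsE)}$. By the naturality of Illusie's obstructions in the target, this map sends $\omega_g$ to $\omega_{\n_T}$, so after duality we obtain $\phi_{rel}^*\omega_g = \mathrm{ob}(g,\bar T)$; the same argument handles the torsor identification.

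The hard part will be confirming that both the formation of $\EE_{rel}$ and the duality adjunction commute with base change along $g\colon T \to \cM(\defY, \defsE)$, so that $\mathrm{ob}(g,\bar T)$ is genuinely well-defined and functorial. For this I will invoke the derived base-change statement Lemma~\ref{lem:basechange2} and the functoriality computation Lemma~\ref{lem:functoriality1} from Appendix~\ref{sec:moduli-sections}, together with the flatness of $\defZ \to \Ct_{\AA^1}\times Y$ and standard properties of families of twisted nodal curves (which secure the Gorenstein hypothesis needed for $\pi_T$).
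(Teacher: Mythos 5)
Your overall architecture — reduce to a pointwise lifting statement for sections of $\defZ \to \Ct_{\AA^1}\times Y$ over a square-zero extension of the curve, then transport the obstruction class and torsor structure to $\EE_{rel}$ via Grothendieck--Verdier duality and check base-change compatibility — is exactly the route the paper takes (it delegates the duality/functoriality bookkeeping to the argument of \cite[Prop~4.2]{ACGS-puncture}). However, there is a genuine gap at the step you treat as routine: the claim that ``by Illusie's deformation theory'' the obstruction to extending $\n_T$ lies in $\Ext^1_{\C_T}(L\n_T^*\LL_{\defZ/\Ct_{\AA^1}\times Y}, \pi_T^*J)$ with lifts a torsor under $\Ext^0$. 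The paper explicitly flags that this does \emph{not} follow from \cite[III.2.2.4]{illusie71}, because $\LL_{\defZ/\Ct_{\AA^1}\times Y}$ is Olsson's cotangent complex of a morphism of algebraic stacks, not a cotangent complex of ringed topoi (and $\C_T$ is itself a twisted curve, hence a Deligne--Mumford stack); nor does it follow from \cite[Thm~1.5]{olsson06b}, because the base $\Ct_{\AA^1}\times Y$ is not a scheme. So the single sentence you lean on is precisely the point that requires a new argument, and your subsequent appeal to ``naturality of Illusie's obstructions in the target'' to identify $\phi_{rel}^*\omega_g$ with $\mathrm{ob}(g,\bar T)$ inherits the same problem.

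The paper closes this gap by isolating it as a separate lemma and proving it from the distinguished triangle
$L\n^*_T \LL_{\defZ/\Ct_{\AA^1}\times Y} \to \LL_{\C_T/\Ct_{\AA^1}\times Y} \to \LL_{\C_T/\defZ} \to$,
using \cite[Thm~1.1]{olsson06b} (which classifies square-zero deformations of \emph{representable} morphisms to algebraic stacks) to identify the classes $[f_{T'}]$ and $[\n_{T'}]$ in the relevant $\Ext^1$ groups, chasing the resulting long exact sequence to produce the obstruction, and then verifying the torsor statement by a direct computation with differences of sections of $\C_{T'}\times_{\Ct_{\AA^1}\times Y}\defZ \to \C_{T'}$. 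To repair your proof you would need to supply this argument (or an equivalent one); everything else in your outline — the duality step $\Ext^i_{\C_T}(\mathcal{F},\pi_T^*J)\cong \Ext^i_T(R\pi_{T,*}(\mathcal{F}\otimes\omegabul_T),J)$ and the base-change checks via Lemmas \ref{lem:basechange2} and \ref{lem:functoriality1} — is consistent with what the paper does.
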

The argument for Lemma \ref{lem:proofopt} is standard. For example, the proof of \cite[Prop~4.2]{ACGS-puncture} applies verbatim to our situation, once we have the following lemma:

\begin{lemma}
Suppose we have a commutative diagram of solid arrows
\begin{equation}\label{eq:proofobs}
\xymatrix{
T \ar[d]_{J^2 = 0} && \C_T \ar[d] \ar[ll] \ar[rr]^{\n_T} && \defZ \ar[d]_p \\
T' && \C_{T'} \ar[ll] \ar[rr] \ar@{-->}[rru]^{\n_{T'}} \ar[rr]_{f_{T'}} &&  \Ct_{\AA^1}\times Y
}
\end{equation}
where $T \to T'$ is an embedding defined by a square zero ideal $J$, the left square is cartesian with $\C_T$ and $\C_{T'}$ twisted curves, and $[f_{T'}] \in \cM(Y)(T')$ and $[\n_T] \in \cM(\defY, \defsE)(T)$. Then a lift $\n_{T'}$ of $\n_T$ exists if and only if the obstruction
\[
L\n_T^*\LL_{\defZ/\Ct_{\AA^1}\times Y} \rightarrow \LL_{\C_T/\C_{T'}}\rightarrow J[1]
\] in $\Ext^1(L\n_T^*\LL_{\defZ/\Ct_{\AA^1}\times Y}, J)$ vanishes, and in this case the set of extensions is a torsor under $\Ext^0(L\n_T^*\LL_{\defZ/\Ct_{\AA^1}\times Y}, J)$.
\end{lemma}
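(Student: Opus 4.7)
The plan is to recognize this lemma as a direct application of the standard deformation theory of morphisms of algebraic stacks over a fixed base (e.g.\ \cite{olsson07}, extending the classical theory in \cite{BF97}). Concretely, giving a lift $\n_{T'}$ making \eqref{eq:proofobs} commute is the same as giving an extension of the morphism $\n_T\colon \C_T \to \defZ$ along the thickening $\C_T \hookrightarrow \C_{T'}$ in the $2$-category of algebraic stacks over $\Ct_{\AA^1}\times Y$, where the reference morphisms $\C_{T'} \to \Ct_{\AA^1}\times Y$ and $\defZ \to \Ct_{\AA^1}\times Y$ are $f_{T'}$ and the canonical projection respectively.

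First, I would identify the square-zero thickening $\C_T \hookrightarrow \C_{T'}$ as having ideal $\pi_{T'}^*J$, where $\pi_{T'}\colon \C_{T'} \to T'$ is the universal curve. This uses only that $\pi_{T'}$ is flat (as a family of twisted curves), so that flat pullback preserves both the closed immersion and the square-zero condition. Consequently $\LL_{\C_T/\C_{T'}} \simeq \pi_T^*J[1]$, and the displayed composition $L\n_T^*\LL_{\defZ/\Ct_{\AA^1}\times Y} \to \LL_{\C_T/\C_{T'}} \to J[1]$ naturally lives in $\Ext^1_{\C_T}(L\n_T^*\LL_{\defZ/\Ct_{\AA^1}\times Y}, \pi_T^*J)$, which is the Ext group the statement refers to under the usual abuse $J = \pi_T^*J$.

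Next, I would invoke the abstract deformation theory: for a morphism $g\colon X \to Z$ of algebraic stacks over a base $B$ and a square-zero thickening $X \hookrightarrow X'$ over $B$ with ideal $\mathcal I$, the set of extensions of $g$ to $X' \to Z$ over $B$ is a torsor under $\Ext^0_X(Lg^*\LL_{Z/B}, \mathcal I)$ when nonempty, and it is nonempty if and only if the obstruction class $Lg^*\LL_{Z/B} \to \LL_{X/X'} \to \mathcal I[1]$ vanishes in $\Ext^1_X(Lg^*\LL_{Z/B}, \mathcal I)$. This is the content of Olsson's extension of Illusie's deformation theory to algebraic stacks \cite{olsson07}. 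Applying this with $g = \n_T$, $B = \Ct_{\AA^1}\times Y$, $Z = \defZ$, and $\mathcal I = \pi_T^*J$ produces exactly the obstruction class and torsor structure claimed.

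I do not anticipate a serious obstacle: the ingredients (flatness of the family of twisted curves, the cotangent complex of a square-zero thickening, and the deformation theory of morphisms of algebraic stacks over a base) are all standard and already invoked elsewhere in the paper. Note also that the lemma requires neither the regularity of $s$ nor the smoothness of $X \cap Y^0$, which is consistent with its purely formal, deformation-theoretic character.
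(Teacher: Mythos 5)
Your first paragraph (identifying the thickening $\C_T \hookrightarrow \C_{T'}$ as square-zero with ideal the flat pullback of $J$) is fine. The gap is in the second paragraph: the ``abstract deformation theory'' you invoke --- existence of lifts of $g\colon X \to Z$ over a square-zero thickening controlled by $\Ext^1(Lg^*\LL_{Z/B}, \mathcal I)$, with torsor structure under $\Ext^0$, for algebraic stacks over an \emph{algebraic-stack} base $B$ --- is precisely what is not available in the cited literature, and is in effect the statement of the lemma itself. The paper flags this explicitly immediately after the lemma: Illusie's III.2.2.4 does not apply because $\LL_{\defZ/\Ct_{\AA^1}\times Y}$ is not a cotangent complex of ringed topoi, and Olsson's lifting theorem (Thm~1.5 of the deformation-theory paper) does not apply because the base $\Ct_{\AA^1}\times Y$ is not a scheme. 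The reference \cite{olsson07} constructs the cotangent complex for morphisms of algebraic stacks but does not prove this lifting/obstruction statement in the required generality, so as written your argument is circular, or at best rests on a citation that does not contain the claimed result.

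The paper's actual proof routes around this. It uses the distinguished triangle
$L\n^*_T \LL_{\defZ/\Ct_{\AA^1}\times Y} \to \LL_{\C_T/\Ct_{\AA^1}\times Y} \to \LL_{\C_T/\defZ}$
and the associated long exact sequence of $\Ext(-,J)$ groups; it then identifies the given datum $[f_{T'}]$ and any candidate lift $[\n_{T'}]$ with classes in $\Ext^1(\LL_{\C_T/\Ct_{\AA^1}\times Y}, J)$ and $\Ext^1(\LL_{\C_T/\defZ}, J)$ respectively, via Olsson's classification of square-zero extensions of \emph{representable} morphisms of algebraic stacks (Thm~1.1 of that paper, which does apply here since all maps in the relevant square are representable), checks compatibility of this identification with the connecting maps, and reads off the existence criterion from exactness. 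The torsor structure under $\Ext^0(L\n_T^*\LL_{\defZ/\Ct_{\AA^1}\times Y}, J)$ is then verified by a direct difference-of-sections computation, using that $\defZ \to \Ct_{\AA^1}\times Y$ is affine so that two lifts inducing the same $f_{T'}$ are sections of $\C_{T'}\times_{\Ct_{\AA^1}\times Y}\defZ \to \C_{T'}$. To repair your proposal you would need to either reproduce an argument of this kind or supply an actual proof of the general lifting statement over a stacky base.
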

 Observe that this lemma does not follow from \cite[III.2.2.4]{illusie71} because the cotangent complex $\LL_{\defZ/\Ct_{\AA^1\times Y}}$ is not a cotangent complex of ringed topoi, nor does it follow from \cite[Thm~1.5]{olsson06b} because our base $\Ct_{\AA^1}\times Y$ is not a scheme.
\begin{proof}
 The distinguished triangle of cotangent complexes
\[
L\n^*_T \LL_{\defZ/\Ct_{\AA^1}\times Y} \to \LL_{\C_T/\Ct_{\AA^1}\times Y} \to \LL_{\C_T/\defZ} \to 
\]
leads to a long exact sequence
\[
\Ext^0(L\n^*_T \LL_{\defZ/\Ct_{\AA^1}\times Y}, J) \to \Ext^1(\LL_{\C_T/\defZ},J) \xrightarrow{\alpha} \Ext^1(\LL_{\C_T/\Ct_{\AA^1}\times Y}, J) \xrightarrow{o} \Ext^1(L\n^*_T \LL_{\defZ/\Ct_{\AA^1}\times Y}, J).
\]
The diagram \eqref{eq:proofobs} defines commuting triangles
\[
\xymatrix{
\C_T \ar[r] \ar[d] & \C_{T'} \ar[ld]^{f_{T'}} & \mbox{and} & \C_T \ar[r] \ar[d] & \C_{T'} \ar@{-->}[ld]^{\n_{T'}} \\
\Ct_{\AA^1}\times Y &                   && \defZ 
}
\]
Applying \cite[Theorem 1.1]{olsson06b}, since all maps in the right square of \eqref{eq:proofobs} are representable, we see that the left triangle gives an element $[f_{T'}] \in \Ext^1(\LL_{\C_T/\Ct_{\AA^1}\times Y}, J)$, and likewise the right triangle gives an element $[\n_{T'}] \in \Ext^1(\LL_{\C_T/\defZ}, J)$. Under this identification, the map given by sending $[\n_{T'}]$ to $[p \circ \n_{T'}]$ is compatible with the map on Ext groups. This follows from the definition of the isomorphism in \cite[Thm~1.1]{olsson06b} and the functoriality described in \cite[III.1.2.2]{illusie71}.
Hence a lift $\n_{T'}$ exists if and only if the fiber $\alpha^{-1}([f_{T'}])$ is nonempty, if and only if the obstruction $o([f_{T'}])$ vanishes. Furthermore, the obstruction $o([f_{T'}])$ is given by the composition 
\[
L\n_T^*\LL_{\defZ/\Ct_{\AA^1}\times Y} \rightarrow \LL_{\C_T/\C_{T'}}\rightarrow J[1]
\]
where in particular the first arrow is the composition $L\n_T^*\LL_{\defZ/\Ct_{\AA^1}\times Y} \rightarrow \LL_{\C_T/\Ct_{\AA^1}\times Y} \rightarrow \LL_{\C_T/\C_{T'}}.$ This follows from the definition of the isomorphism in \cite[Thm~1.1]{olsson06b}.

Assuming $o([f_{T'}]) = 0$, we compute directly that the lifts form an $\Ext^0(L\n^*_T \LL_{\defZ/\Ct_{\AA^1}\times Y}, J)$-torsor.  
Suppose we have two lifts $\n_i$ of $\n_T$ for $i=1,2$ inducing the same $f_{T'}$. We may then view $\n_i$ as sections of $\C_{T'}\times_{\Ct_{\AA^1}\times Y}\defZ \to \C_{T'}$. We have their differences
\[
(\n^*_1 - \n^*_2) \in \Ext^0(\n^*_T\Omega_{\C_T\times_{\Ct_{\AA^1}\times Y}\defZ/\C_T}, J) = \Ext^0(\n^*_T \Omega_{\defZ/\Ct_{\AA^1}\times Y}, J) \cong \Ext^0(L\n^*_T \LL_{\defZ/\Ct_{\AA^1}\times Y}, J)
\]
by a standard calculation.
 \end{proof}

\begin{lemma}
 $R\pi_*(L\n^*\LL_{\defZ/(\Ct_{\AA^1} \times Y)}\otimes \omegabul)$ is perfect of amplitude $[-2, 0]$.
 \end{lemma}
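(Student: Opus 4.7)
The plan is to factor the morphism $\defZ \to \Ct_{\AA^1}\times Y$ as
\[
\defZ \longrightarrow \Ct_{\AA^1}\times_{\AA^1}\defY \longrightarrow \Ct_{\AA^1}\times Y,
\]
analyze the cotangent complex of each piece, and then track Tor-amplitudes through $L\n^*$, $\otimes\,\omegabul$, and $R\pi_*$. The first arrow is the structural projection of the vector bundle $\defZ=\vb(\omega\otimes(\defsE)^\vee)$, hence smooth, and contributes a locally free cotangent sheaf in degree $0$. The second arrow is the Cartesian base change along $\Ct_{\AA^1}\to \AA^1$ of $\defY\to Y\times \AA^1$; by the regular embedding $\defY\hookrightarrow E|_{Y\times\AA^1}$ constructed in Section~\ref{sec:target} followed by the smooth vector bundle projection $E|_{Y\times\AA^1}\to Y\times\AA^1$, this morphism is lci, and $\LL_{\defY/Y\times\AA^1}$ is locally represented by the two-term complex of locally free sheaves $[N^\vee_{\defY/E|_{Y\times\AA^1}}\to \Omega_{E|_{Y\times\AA^1}/Y\times\AA^1}|_\defY]$ in degrees $[-1,0]$. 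Combining via the distinguished triangle for the composition, $\LL_{\defZ/(\Ct_{\AA^1}\times Y)}$ is perfect of Tor-amplitude $[-1,0]$.

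Pulling back along the universal section $\n\colon \C_\cM \to \defZ$ preserves Tor-amplitude, so $L\n^*\LL_{\defZ/(\Ct_{\AA^1}\times Y)}$ is perfect of Tor-amplitude $[-1,0]$ on $\C_\cM$. Since $\omegabul_\cM = \omega_\cM[1]$ is an invertible sheaf shifted by $[1]$, tensoring gives a perfect complex of Tor-amplitude $[-2,-1]$.

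Finally, $\pi\colon \C_\cM \to \cM(\defY,\defsE)$ is proper, flat, and has at most one-dimensional nodal fibers, hence has relative cohomological dimension $\leq 1$ and $R\pi_*$ preserves perfectness. Representing $L\n^*\LL_{\defZ/(\Ct_{\AA^1}\times Y)}\otimes\omegabul$ locally by a two-term complex $[A^{-2}\to A^{-1}]$ with $A^i$ locally free, each $R\pi_*A^i$ lies in degrees $[0,1]$ relative to the position of $A^i$, so the total complex has amplitude contained in $[-2,0]$, yielding the claim.

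The statement is essentially formal once the factorization is in place; the only bookkeeping care needed is (i) the index shift under $\omega[1]$ and (ii) the upper-bound increase by $1$ under $R\pi_*$ for flat nodal curve families, both of which follow from standard truncation/spectral sequence arguments on a two-term locally free resolution. No deeper obstacle is anticipated beyond correctly identifying $\defZ \to \Ct_{\AA^1}\times Y$ as lci via the two-step factorization above.
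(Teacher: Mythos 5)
Your proof is correct and follows essentially the same route as the paper's: both identify $\defZ \to \Ct_{\AA^1}\times Y$ as affine and lci via the regular embedding \eqref{eq:target-embedding} composed with a vector bundle projection, conclude $\LL_{\defZ/(\Ct_{\AA^1}\times Y)}$ is perfect in $[-1,0]$, shift to $[-2,-1]$ after tensoring with $\omegabul=\omega[1]$, and gain one degree under $R\pi_*$ along the curve family. Your write-up merely makes explicit the factorization and the cohomological-dimension bookkeeping that the paper leaves implicit.
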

 \begin{proof}
 Consider the composition $\defY \to E|_{Y\times\AA^1} \to Y\times\AA^1$ where the first arrow is a regular embedding. Thus by \eqref{eq:universal-target}, the morphism $\defZ \to (\Ct_{\AA^1} \times Y)$ is affine and lci. This implies that $\LL_{\defZ/(\Ct_{\AA^1} \times Y)}$ is perfect of amplitude $[-1,0]$, hence $L\n^*\LL_{\defZ/(\Ct_{\AA^1} \times Y)}\otimes \omegabul$ is perfect of amplitude $[-2,-1]$. Then (1) follows as we push forward along a family of twisted curves. 
 \end{proof}

It remains to verify that $\EE_{\cM/\Mgnt_{\AA^1}}$ is perfect in $[-1,0]$. Rotating the top of \eqref{eq:proofpot}, we obtain a distinguished triangle 
\[
R\pi_*(L\n^*\LL_{\defZ/(\Ct_{\AA^1} \times Y)}\otimes \omegabul)[-1] \longrightarrow p^*\EE_{\cM(Y)_{\AA^1}/\Mgnt_{\AA^1}} \longrightarrow \EE_{\cM/\Mgnt_{\AA^1}} \longrightarrow 
\]
Since the middle complex is perfect in $[-1,0]$ and the left one is perfect in $[-1,1]$ (this uses regularity of \eqref{eq:target-embedding}, $\EE_{\cM/\Mgnt_{\AA^1}}$ is perfect in at least $[-2,0]$. We will next prove that $\EE_{\cM/\Mgnt_{\AA^1}}$ is perfect in $[-1,0]$ by showing that $h^2(\EE^{\vee}_{\cM/\Mgnt_{\AA^1}}) = 0$.

This coherent sheaf vanishes if its fibers do. Recall the inclusion $\iota_c\colon \cM_c \to \cM(\defY, \defsE)$  of a fiber over $\AA^1$. We have
\begin{equation}\label{eq:pot-f-perfect}
\iota_c^*h^2(\EE^{\vee}_{\cM/\Mgnt_{\AA^1}}) = h^2(L\iota_c^*\EE^{\vee}_{\cM/\Mgnt_{\AA^1}} )= h^2\left(R \pi_*\big( L\iota_c^*L\n^*(\LL_{\defZ/\Ct_{\AA^1}}\otimes \omegabul)^{\vee}\big)\right),
\end{equation}
where now $L\iota_c^*$ is pullback to a closed fiber of the universal curve on $\cM(\defY, \defsE)$. 
The two equalities hold because (1) $\EE^{\vee}_{\AA^1}$ is perfect, so its derived pullback is computed by the usual pullback applied to each term; and (2) $\pi$ is flat so we may apply the tor-independent base change theorem (see e.g. \cite[Cor~4.13]{HR17}). 
The right hand side of \eqref{eq:pot-f-perfect} is precisely the second cohomology of $\EE^{\vee}_{\cM_c/\Mgnt}$ (using \eqref{eq:specialpot}). Thus, the proof of Proposition \ref{prop:defspace} is concluded by the following observation:

\begin{lemma}\label{lem:fperfect}
The fibers $\EE_{\cM_c/\Mgnt}$ in \eqref{eq:specialpot} are perfect in $[-1,0]$.
\end{lemma}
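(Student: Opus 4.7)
The plan is to analyze $\cM_c$ in the two cases $c \ne 0$ and $c = 0$ separately and, in each, produce a distinguished triangle involving $\EE_{\cM_c/\Mgnt}$ whose other two terms are visibly perfect of amplitude $[-1,0]$.

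For $c \ne 0$, \eqref{eq:target-fiber} gives $\defY_c \cong Y$ and $\defsE|_{\defY_c} \cong \E$, so $\defZ_c = Vb_{\Ct \times Y}(\omega \otimes \E^\vee)$ and $\cM_c = \cM(Y,E)$. Let $q\colon \defZ_c \to \Ct \times Y$ be the vector bundle projection and $p\colon \cM_c \to \cM(Y)$ the natural map induced by \eqref{eq:specialm}. The cotangent triangle of the factorization $\defZ_c \to \Ct \times Y \to \Ct$, after pullback along $\n_c$, tensoring with $\omegabul_{\cM_c}$, and $R\pi_*$, becomes
\[
p^*\EE_{\cM(Y)/\Mgnt} \longrightarrow \EE_{\cM_c/\Mgnt} \longrightarrow R\pi_*(f^*\E)[1] \longrightarrow,
\]
where $f\colon \C_{\cM_c} \to Y$ is the universal map. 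The identification of the left-hand term uses Lemma \ref{lem:functoriality1} and the base-change statement of Lemma \ref{lem:basechange2}; the right-hand term uses that $q$ is smooth with $\LL_{\defZ_c/\Ct\times Y} = q^*(\omega^\vee \otimes \E)$ in degree zero, combined with the projection formula. The left term is perfect of amplitude $[-1,0]$ by the standing POT hypothesis on $\cM(Y)/\Mgnt$, and the right term is the shift by $[1]$ of the pushforward of a locally free sheaf along a family of twisted curves, hence perfect of amplitude $[-1,0]$. The long exact sequence in cohomology then confines $\EE_{\cM_c/\Mgnt}$ to amplitude $[-1,0]$, and it is perfect because distinguished triangles preserve perfection.

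For $c = 0$, Lemma \ref{lem:computecosec} identifies $\defsE|_{\defY_0}$ with $\E|_X$ under the canonical projection $\defY_0 \cong E|_X \to X$, so $\defZ_0$ is naturally a vector bundle on $\Ct \times X$ with sheaf of sections $\E_X \oplus \omega \otimes \E_X^\vee$. By \eqref{eq:specialm0} the natural map $p\colon \cM_0 \to \cM(X)$ is obtained from the fiber product description, and $\n_0$ factors through $\Ct \times X$ compatibly with the universal map of $\cM(X)$. Running the same argument with the factorization $\defZ_0 \to \Ct \times X \to \Ct$ produces
\[
p^*\EE_{\cM(X)/\Mgnt} \longrightarrow \EE_{\cM_0/\Mgnt} \longrightarrow \bigl(R\pi_*(f^*\E_X) \oplus R\pi_*(f^*\E_X^\vee \otimes \omega)\bigr)[1] \longrightarrow,
\]
whose outer terms are both perfect in $[-1,0]$ by the same reasoning (the POT hypothesis on $\cM(X)/\Mgnt$ on the left, and curve-pushforwards of locally free sheaves on the right). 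The conclusion follows.

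The main subtlety I expect is the identification of the leftmost term of each triangle with the pullback of the hypothesized POT on $\cM(Y)$ or $\cM(X)$ through $p$. This amounts to checking that $q \circ \n_c$ (resp.\ the analogous composition in the $c=0$ case) realizes the universal map of $\cM(Y)$ (resp.\ $\cM(X)$) on $\cM_c$, which is immediate from the fiber-product descriptions \eqref{eq:specialm} and \eqref{eq:specialm0}, combined with Lemma \ref{lem:functoriality1} to compare the resulting candidate POTs.
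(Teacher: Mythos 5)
Your proof is correct and follows essentially the same route as the paper: in each case ($c=0$ and $c\neq 0$) the paper also uses the cotangent triangle of the factorization $\defZ_c \to \Ct\times X$ (resp.\ $\Ct\times Y$) $\to \Ct$, identifies the outer terms with the assumed perfect obstruction theory on $\cM(X)$ (resp.\ $\cM(Y)$) and with the curve-pushforward of a locally free sheaf shifted by $[1]$, and concludes by the two-out-of-three argument. The only differences are cosmetic (order of the cases and whether the triangle is rotated before reading off amplitudes).
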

\begin{proof}
We first consider the case $c=0$. The sequence $\defZ_0 \to \Ct\times X \to \Ct$ induces a triangle of cotangent complexes
\[
L f^* \LL_{\Ct\times X/\Ct} \longrightarrow L\n^*_0\LL_{\defZ_0/\Ct} \longrightarrow L\n^*_0\LL_{\defZ_0/\Ct\times X}\longrightarrow
\]
Applying the construction of \eqref{eq:candidatepot} and rotating the triangle, we obtain a distinguished triangle
\[
R\pi_*(L\n^*_0\LL_{\defZ_0/\Ct\times X}\otimes\omegabul)[-1] \longrightarrow \EE_{\cM(X)/\Mgnt} \longrightarrow \EE_{\cM_0/\Mgnt} \longrightarrow 
\]
Since $\defY_0 = E|_X \to X$ is the projection of a vector bundle, $\defZ_0 \to \Ct\times X$ factors through a sequence of  smooth representable morphisms $\defZ_0 \to \Ct\times\defY_0 \to \Ct\times X$. Thus $\LL_{\defZ_0/\Ct\times X}$ is a vector bundle over $\defZ_0$. This implies that the left side of the above triangle is perfect in $[0,1]$. Since by assumption $\EE_{\cM(X)/\Mgnt}$ is perfect in $[-1,0]$, $\EE_{\cM_0/\Mgnt}$ is perfect in $[-1,0]$ as well. 

Replacing $X$ by $Y$, and noticing that $\EE_{\cM(Y)/\Mgnt}$ is perfect in $[-1,0]$ by assumption, the case of $c\neq 0$ can be then proved similarly.
\end{proof}

\subsection{Virtual cycles}
We now have the necessary data to define virtual cycles on $\cM=\cM(\defY, \defE)$ and its fibers $\cM_c$ using \cite[Sec~5]{BF97} (note that the need for global resolutions was removed in \cite{kresch99}). When $c \neq 0,$ by Section \ref{sec:special1} we have $\cM_c = \cM(Y, E)$. This, combined with \eqref{eq:specialpot}, yields the following corollary to Proposition \ref{prop:defspace}.
\begin{corollary}\label{cor:virtual_cycles}The moduli space $\cM$ (resp. $\cM_c$) carries a virtual fundamental class $[\cM]^\vir$ (resp. $[\cM_c]^\vir$) defined by the relative perfect obstruction theory given by the canonical morphism \eqref{eq:candidatepot}. In particular, when $c \neq 0$ we obtain a virtual fundamental class $[\cM(Y, E)]^\vir := [\cM_c]^\vir$ on $\cM(Y, E)=\cM_c$.
\end{corollary}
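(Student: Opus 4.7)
The plan is to invoke Proposition \ref{prop:defspace} together with the construction of virtual fundamental classes of Behrend--Fantechi. First, Proposition \ref{prop:defspace} provides a perfect obstruction theory $\phi\colon \EE_{\cM/\Mgnt_{\AA^1}} \to \LL_{\cM/\Mgnt_{\AA^1}}$ relative to $\Mgnt_{\AA^1}$, and similarly for each fiber $\phi_c\colon \EE_{\cM_c/\Mgnt} \to \LL_{\cM_c/\Mgnt}$ relative to $\Mgnt$. Since $\Mgnt$ is a smooth algebraic stack (the moduli of prestable twisted curves, cf.\ \cite{AV02}), the product $\Mgnt_{\AA^1} = \Mgnt \times \AA^1$ is likewise smooth, so both bases are smooth algebraic stacks.

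Next, I would apply the Behrend--Fantechi machinery \cite[Sec.~5]{BF97} to produce the virtual fundamental classes $[\cM]^\vir \in A_*(\cM)$ and $[\cM_c]^\vir \in A_*(\cM_c)$. As the parenthetical remark in the statement indicates, the original Behrend--Fantechi construction required a global resolution of the obstruction theory by a two-term complex of vector bundles, but this hypothesis was removed by Kresch \cite{kresch99} by defining the intrinsic normal cone directly as a cycle in the stack of cones; hence no global resolution is needed here. The inputs to Kresch's construction are precisely (i) a perfect obstruction theory in $[-1,0]$ relative to a smooth base, and (ii) a separated Deligne--Mumford stack of finite type, both of which are supplied by Proposition \ref{prop:defspace} and Proposition \ref{prop:goodstack}.

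Finally, for the identification when $c \neq 0$, I would invoke the computation from Section \ref{sec:special1}: by \eqref{eq:target-fiber} and \eqref{eq:specialm}, we have $\cM_c = \cM(Y, E)$ as stacks over $\Mgnt$. Moreover, the diagram \eqref{eq:specialpot} shows that the perfect obstruction theory $\phi_c$ on $\cM_c$ is canonically identified with the one obtained by applying the construction \eqref{eq:candidatepot} directly to $\cM(Y,E)$ with $Z = \defZ_c$ and $\C = \Ct$. Therefore the class $[\cM_c]^\vir$ is intrinsic to $\cM(Y,E)$ and may be taken as the definition of $[\cM(Y,E)]^\vir$. Since every step is a direct citation, there is no substantive obstacle to overcome—the corollary is essentially a bookkeeping consequence of the perfect obstruction theory already constructed, combined with the identification of the generic fiber of the deformation.
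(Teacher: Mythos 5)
Your proposal is correct and follows essentially the same route as the paper: the corollary is obtained by combining Proposition \ref{prop:defspace} with the Behrend--Fantechi construction (citing \cite{kresch99} to dispense with global resolutions), and the identification $\cM_c = \cM(Y,E)$ for $c \neq 0$ via Section \ref{sec:special1} and \eqref{eq:specialpot}. No gaps.
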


\begin{remark}
One can directly show that the canonical theory \eqref{eq:candidatepot}  is a perfect obstruction theory on $\cM(Y, E)$, and hence induces a virtual fundamental class $[\cM(Y, E)]^\vir$, independent of any assumptions on $s$.
\end{remark}

\section{The cosection}\label{sec:cosec}

\subsection{Relative cosection in the general setting}
We now define a cosection of the relative obstruction sheaf of the candidate obstruction theory \eqref{eq:candidatepot} on (an open substack of) the moduli of sections \eqref{eq:defsec}, in the general situation of \eqref{eq:generalsetup}. Consider the family $Z \rightarrow \C \rightarrow \mathfrak{U}$ in \eqref{eq:generalsetup}, and recall the notational conventions in \ref{sec:conventions}. Let $\H\rightarrow \C$ be the total space of the vector bundle corresponding to the dualizing sheaf $\omega_{\mathfrak{U}}$ on $\C$.
The cosection construction applies when the morphism $Z \rightarrow \C$ factors as
\[
Z \xrightarrow{\W} \H \rightarrow \C.
\]
We also assume that the base $\mathfrak{U}$ is smooth for constructing an absolute cosection in the next section.

The morphism $\W$ induces a canonical map
\[
\omega_{Sec}^\vee = L\n^*L\W^*\LL_{\H/\C} \rightarrow L\n^*\LL_{Z/\C}.
\]
Tensoring this morphism with $\omegabul_{Sec}$, applying $R\pi_*$, taking the (derived) dual, and finally composing with the canonical map $(R\pi_*(\OO_{\C_{Sec}}[1]))^\vee \xrightarrow{} \OO_{{Sec}}[-1]$, we get a \textit{cosection} for the obstruction theory as a morphism in the derived category:
\begin{equation}\label{eq:defcoseccomplex}
\sigma\colon \EE_{\Sec{Z}{\C}/\mathfrak{U}}^\vee \rightarrow (R \pi_*( \OO_{\C_{Sec}}[1]))^\vee \xrightarrow{} \OO_{{Sec}}[-1].
\end{equation}
The map $(R\pi_*(\OO_{\C_{Sec}}[1]))^\vee \xrightarrow{} \OO_{{Sec}}[-1]$ is the dual of a shift of the map $\OO_{{Sec}} \rightarrow R\pi_*(\OO_{\C_{Sec}}) $ induced via adjunction by $\pi^*\OO_{Sec} = L\pi^*\OO_{Sec} \rightarrow \OO_{\C_{Sec}}$.
The cosection $\sigma$ of the obstruction theory induces a cosection $\sigma^1$ of the relative obstruction sheaf $Ob_{\Sec{Z}{\C}/\mathfrak{U}} := h^1(\EE_{\Sec{Z}{\C}/\mathfrak{U}}^\vee)$, defined to be $h^1$ applied to \eqref{eq:defcoseccomplex}:
\begin{equation}\label{eq:defcosec}
\sigma^1\colon h^1(\EE_{\Sec{Z}{\C}/\mathfrak{U}}^\vee) \rightarrow h^1( \pi_*( \OO_{\C_{Sec}}[1]))^\vee ) \xrightarrow{\sim} \OO_{\Sec{Z}{\C}},
\end{equation}
where the second map is now an isomorphism.
The \textit{degeneracy locus} of $\sigma$ (or of $\sigma^1$) is the closed subset of $\Sec{Z}{\C}$ where the fiber of $\sigma^1$ vanishes. We denote it $\Sec{Z}{\C}(\sigma)$.

The cosection \eqref{eq:defcoseccomplex} has the useful property that it vanishes on the image of the intrinsic normal cone in $h^1/h^0(\EE^\vee_{\Sec{Z}{\C}})$. This is the content of the next lemma.
\begin{lemma}\label{lem:general-vanishing}
The map of cones induced by $\sigma \circ \phi^\vee$ is zero.
\end{lemma}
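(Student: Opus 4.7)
The plan is to unwind the constructions of $\sigma$ and $\phi$, identify their composite with a cotangent-complex map on $\C_\cM$ (where $\cM = \Sec{Z}{\C}$) arising from the composition $\C_{\cM} \xrightarrow{\n} Z \xrightarrow{\W} \H \to \C$, and then exploit the fact that $\W\circ\n$ is a section of $\H \to \C$ (pulled back to $\C_\cM$) to conclude that the induced map of cone stacks vanishes. Passing to the dual, it is equivalent to show that $\phi\circ\sigma^\vee\colon \OO_\cM[1] \to \LL_{\cM/\mathfrak{U}}$ induces the zero map of bundle stacks. The morphism $\phi$ is obtained from the counit of the Grothendieck-duality adjunction $R\pi_*\dashv\pi^!$ applied to the natural map $L\n^*\LL_{Z/\C}\to\LL_{\C_{\cM}/\C}=\pi^*\LL_{\cM/\mathfrak{U}}$, while $\sigma^\vee$ arises from the unit of $\pi^*\dashv R\pi_*$ applied to the inclusion $\omega_{\cM}^\vee = L\n^*L\W^*\LL_{\H/\C}\hookrightarrow L\n^*\LL_{Z/\C}$, tensored with $\omegabul_{\cM}$.

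Tracing through the two adjunctions, $\phi\circ\sigma^\vee$ corresponds to applying $R\pi_*(-\otimes\omegabul_{\cM})$ to the natural cotangent map
\[
\omega_{\cM}^\vee = L(\W\circ\n)^*\LL_{\H/\C} \longrightarrow \LL_{\C_{\cM}/\C}
\]
associated to the composition $\C_{\cM} \xrightarrow{\W\circ\n} \H \xrightarrow{p} \C$, whose composite is the structural projection $\C_{\cM}\to\C$. This map fits into the cotangent triangle of the composition:
\[
L(\W\circ\n)^*\LL_{\H/\C} \longrightarrow \LL_{\C_{\cM}/\C} \longrightarrow \LL_{\C_{\cM}/\H} \xrightarrow{[1]}.
\]
Dualizing and passing to $h^1/h^0$, the induced map of bundle stacks on $\C_\cM$ has the property (by functoriality of intrinsic normal cones under a composition) that, when restricted to $\mathfrak{C}_{\C_\cM/\C}\subset h^1/h^0(\LL_{\C_\cM/\C}^\vee)$, it factors through the image of $\mathfrak{C}_{\C_\cM/\H}$ in $h^1/h^0(L(\W\circ\n)^*\LL_{\H/\C}^\vee) = \omega_{\cM}$.

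The crucial geometric input is that $\W\circ\n$ is a section of the pulled-back vector bundle $p_{\C_\cM}^*\H\to \C_\cM$, so the embedding $\C_{\cM}\hookrightarrow\H$ is a regular immersion with conormal bundle canonically $\omega_{\cM}^\vee$; a direct local computation then shows that the induced connecting map of cones is the zero map (intuitively, the cone of a section of a vector bundle is the bundle itself, while the map to $\omega_\cM$ arising from the triangle is the differential of a fixed section, which vanishes on tangent directions to that section). Since $\C_{\cM} = \C\times_{\mathfrak{U}}\cM$ is a flat base change, $\mathfrak{C}_{\C_{\cM}/\C}$ is canonically identified with $\pi^*\mathfrak{C}_{\cM/\mathfrak{U}}$, and Grothendieck duality transfers the vanishing on $\C_\cM$ to the desired vanishing on $\cM$. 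I expect the main obstacle to be the careful bookkeeping of the adjunctions, shifts, and the functoriality of intrinsic normal cones in the derived/stacky setting, together with verifying that pushing forward from $\C_\cM$ to $\cM$ via $R\pi_*$ correctly transfers the cone-level vanishing.
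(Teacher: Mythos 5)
Your reduction of $\sigma\circ\phi^\vee$ to the cotangent-complex map $L(\W\circ\n)^*\LL_{\H/\C}\to\LL_{\C_{\cM}/\C}$ on the universal curve is correct, but the argument does not close, for two reasons. First, the mechanism you invoke for the curve-level vanishing is wrong: $\W\circ\n\colon\C_{\cM}\to\H$ is a morphism over $\C$, not an immersion (only its graph $\C_{\cM}\hookrightarrow\H\times_{\C}\C_{\cM}$ is a regular immersion), and the cone you must kill is $\coneC_{\C_{\cM}/\C}\cong\pi^*\coneC_{\cM/\fU}$ --- the flat pullback of the intrinsic normal cone of the moduli problem --- which has nothing to do with the normal cone of the section. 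The map $L(\W\circ\n)^*\LL_{\H/\C}\to\LL_{\C_{\cM}/\C}$ is the relative differential of $W\circ n$ and is not zero as a map of complexes, so no ``local computation with a fixed section'' makes it vanish. (A curve-level vanishing of the induced map of cone stacks does hold, but because $\H\to\C$ is \emph{smooth}, so that $h^1/h^0$ of the dual of $L(\W\circ\n)^*\LL_{\H/\C}$ is the trivial cone stack $[0/(\W\circ\n)^*T_{\H/\C}]$ --- not because of your regular-immersion picture.)

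Second, and more seriously, the final ``transfer'' step is exactly where the content of the lemma lies, and it is missing. The lemma concerns the map of cone stacks $\coneC_{\cM/\fU}\to\CC_{\cM}$ on $\cM$, which is obtained from the curve-level map by applying $R\pi_*((-)\otimes\omegabul)$ and the trace; but $h^1/h^0$ does not commute with $R\pi_*$, and there is no pushforward of cone stacks along $\pi$ that converts a fiberwise vanishing on $\C_{\cM}$ into the required vanishing on $\cM$. The paper closes this gap by descending the factorization through $\H$ to the moduli level: composing sections with $\W$ gives $\mu\colon\Sec{Z}{\C}\to\Sec{\H}{\C}$, the functoriality of \eqref{eq:candidatepot} (Lemma \ref{lem:functoriality1}) shows that $\hat\sigma\circ\phi^\vee$ factors through $(\mu^*\LL_{\Sec{\H}{\C}/\fU})^\vee$, and since $\Sec{\H}{\C}\to\fU$ is smooth and representable by affine schemes its relative intrinsic normal sheaf vanishes, killing the cone. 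Your proposal never introduces $\Sec{\H}{\C}$ (or any substitute for it), and without such a device the passage from $\C_{\cM}$ back to $\cM$ is unjustified.
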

\begin{proof}
Let $\Sec{\H}{\C}$ be the moduli of sections of $\H \to \C$ over $\mathfrak{U}$. By \cite[Prop~2.2]{CL12} for orbifold curves, the morphism $\Sec{\H}{\C} \rightarrow \mathfrak{U}$ is smooth and representable by affine schemes. Composing sections of $Z \to \C$ with $\W$ induces a morphism $\mu\colon \Sec{Z}{\C} \rightarrow \Sec{\H}{\C}$.
From the left square in \eqref{eq:a.2.2-1}, we have a commuting square 
\[
\begin{tikzcd}
(\mu^*\EE_{\Sec{\H}{\C}/\mathfrak{U}})^\vee & (\EE_{\Sec{Z}{\C}/\mathfrak{U}})^\vee \arrow[l, "\hat \sigma"'] \\
(\mu^*\LL_{\Sec{\H}{\C}/\mathfrak{U}})^\vee \arrow[u] & (\LL_{\Sec{Z}{\C}/\mathfrak{U}})^\vee \arrow[l] \arrow[u, "\phi^\vee_{\Sec{Z}{\C}/\mathfrak{U}}"']
\end{tikzcd}
\]
where $\hat \sigma$ is the morphism \[\EE_{\Sec{Z}{\C}/\mathfrak{U}}^\vee \rightarrow (R \pi_*( \OO_{\C_{Sec}}[1]))^\vee\] from \eqref{eq:defcoseccomplex} followed by a quasi-isomorphism. After applying $h^1/h^0$ to this diagram, the composition $\uparrow_\leftarrow$ vanishes because
\begin{equation*}
  h^1/h^0((\mu^*\LL_{\Sec{\H}{\C}/\mathfrak{U}})^\vee) = \mathfrak{N}_{\Sec{\H}{\C}/\mathfrak{U}}=0,
\end{equation*}
where $\mathfrak{N}$ denotes the intrinsic normal sheaf, because
$\Sec{\H}{\C} \rightarrow \mathfrak{U}$ is smooth and representable by
affine schemes.
On the other hand, the desired map of cones factors through the
composition ${}^\leftarrow \uparrow$.
\end{proof}

\subsection{The family cosection}

Let $\cM = \cM(\defY, \defsE)$ with fiber $\cM_c$ over a closed point $c \in \AA^1$. 
Recall the perfect obstruction theory 
$\phi \colon \EE_{\cM/\Mgnt_{\AA^1}}\rightarrow \LL_{\cM/\Mgnt_{\AA^1}}$ and its specialization $\phi_c\colon \EE_c=\EE_{\cM_c/\Mgnt} \rightarrow\LL_{\cM_c/\Mgnt}$, see \eqref{eq:family-pot} and Proposition \ref{prop:defspace}.

In this section, we apply the general construction of \eqref{eq:defcoseccomplex} to get a cosection
\[
\sigma\colon \EE_{\cM/\Mgnt_{\AA^1}}^\vee \to \OO_{\cM}[-1].
\]
To do so, let $\varpi \rightarrow \Ct_{\AA^1}$ be the $\CC^*$-torsor of $\omega = \omega_{\Ct_{\AA^1}}$
so that $\omega = \varpi \times_{\CC^*} \CC$, hence
$\defZ = \varpi \times_{\CC^*} (\defE)^{\vee}$.
Let $\H\rightarrow \Ct_{\AA^1}$ be the total space of $\omega$.
Taking the product of \eqref{eq:family-super-potential} with $\varpi$ and
then quotienting by $\CC^*$, we see that $W$ descends to a map
\[
\W\colon \defZ \rightarrow \H
\]
over $\Ct_{\AA^1}$, so we get the relative cosection $\sigma$ above.

\subsection{Degeneracy locus}
The inclusion $X \to Y$ induces a closed immersion $\Sec{\Ct \times X}{\Ct} \hookrightarrow \Sec{\Ct \times Y}{\Ct}$ by Lemma~\ref{lem:closed}. Let $\cM(X)$ denote the intersection
\[
\cM(X) := \Sec{\Ct \times X}{\Ct} \times_{\Sec{\Ct \times Y}{\Ct}}\cM(Y).
\]
The closed embedding $X \times \AA^1 \subset \defZ$ (inclusion in $\defY$ followed by the zero section) induces a closed embedding $\cM(X)\times \AA^1 \subset \cM(\defY, \defsE)$ over $\AA^1$ by Lemma~\ref{lem:closed}.

In this section we compute the degeneracy locus $\cM(\sigma) \subset \cM$. Our main result is the following.
\begin{proposition}\label{prop:degeneracy}
As closed sets, $\cM(\sigma) = \cM(X) \times \AA^1$.
\end{proposition}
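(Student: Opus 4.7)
The plan is to compute the cosection $\sigma^1$ explicitly at a closed point $[\n\colon C \to \defZ]$ of $\cM$, identify its vanishing with the condition that $\n$ factors through the critical locus of $\W$, and then invoke Lemmas~\ref{lem:critical-loci} and \ref{lem:computecosec} to relate this critical locus to $X$.

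First I would unwind \eqref{eq:defcoseccomplex} at the fiber over $[\n]$. Using Grothendieck--Serre duality and the identification $\omega_C^\vee \otimes \omegabul \simeq \OO_C[1]$, the fiberwise cosection takes the form
\[
\sigma^1_{[\n]}\colon H^1(C, \n^* T_{\defZ/\Ct_{\AA^1}}) \longrightarrow H^1(C, \omega_C) \simeq \CC,
\]
induced by the pullback $\n^*(d\W)\colon \n^*T_{\defZ/\Ct_{\AA^1}} \to \omega_C$ of the differential of $\W$. Under Serre duality, this linear functional on $H^1$ is dual to the map $\CC = H^0(C, \OO_C) \to H^0(C, \n^*\Omega_{\defZ/\Ct_{\AA^1}} \otimes \omega_C)$ sending $1 \mapsto \alpha := \n^*(d\W)$. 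Therefore $\sigma^1$ vanishes at $[\n]$ if and only if the global section $\alpha$ vanishes identically, if and only if $\n$ factors set-theoretically through the critical locus of $\W$ in $\defZ$.

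With this identification in hand, both inclusions are short. For $\cM(X) \times \AA^1 \subseteq \cM(\sigma)$, suppose $\n$ factors through the zero section $X \times \AA^1 \hookrightarrow \defZ$. By Lemma~\ref{lem:computecosec}, $s_\defY^-$ vanishes on $X \times \AA^1$, and the local formula $W = \sum_i p_i s_i$ from the proof of that lemma shows $dW$ vanishes identically on the zero section over $X \times \AA^1$; twisting by $\varpi$ yields $d\W = 0$ there, hence $\alpha = 0$. Conversely, suppose $\alpha = 0$, and let $f\colon C \to Y$ denote the induced map. Condition~(3) shows $U := f^{-1}(Y^0) \subset C$ is dense open, and on $U$ the section $\n|_U$ takes values in $\defZ|_{\defY^0}$ over the smooth part. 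Lemma~\ref{lem:critical-loci} applied to the smooth Deligne--Mumford stack $\defY^0$ with the regular section $s_\defY^-|_{\defY^0}$---whose zero locus $(X \cap Y^0) \times \AA^1$ is smooth by Lemma~\ref{lem:computecosec} and \eqref{eq:critical-locus}---identifies $Crit(W^0) = (X \cap Y^0) \times \AA^1$ as the zero section in $(E^0)^\vee$; the $\CC^*$-equivariance of $W$ preserves this through the $\varpi$-twist to $\W$. Hence $\n|_U$ factors through the zero section over $(X \cap Y^0) \times \AA^1 \subseteq X \times \AA^1$, and density of $U$ in $C$ together with the closedness of $X \times \AA^1 \hookrightarrow \defZ$ forces $\n$ to factor through $X \times \AA^1$ on all of $C$, i.e., $[\n] \in \cM(X) \times \AA^1$.

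The main obstacle is the first step: threading the abstract cosection \eqref{eq:defcoseccomplex} through Grothendieck--Serre duality to obtain the explicit cup-product description as multiplication by $\alpha$, with appropriate care for the fact that $\defZ \to \Ct_{\AA^1}$ is lci but not smooth. Once this identification is secured, the remaining geometric arguments are short and rely directly on Lemmas~\ref{lem:critical-loci} and \ref{lem:computecosec} together with condition~(3).
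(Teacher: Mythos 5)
Your proposal follows the paper's proof essentially step for step: you reduce the vanishing of the cosection fiber at $(C,n,c)$ to the vanishing of the section $h^0(Ln^*(d\W_Z))$ of $h^0(Ln^*\LL_{Z/C})\otimes\omega_C$, prove the forward inclusion by restricting to the dense open locus over $Y^0$ (where Lemma~\ref{lem:critical-loci} together with the $\CC^*$-torsor description identifies $Crit(\W)$ with the zero section over $(X\cap Y^0)\times\AA^1$) and then taking closures, and prove the backward inclusion from the local formula $dW=\langle p,ds\rangle+\langle s,dp\rangle$. The one place you are noticeably lighter than the paper is the backward inclusion: the paper devotes all of Lemma~\ref{lem:deriv_vanishes} (local model, universal $GL(n)\times\CC^*$-quotient, functoriality of the cotangent complex) to upgrading that coordinate computation to the vanishing of the map of cotangent complexes $d\W$ restricted to $X\times\AA^1$ inside the stack $\defZ$, and your phrase ``twisting by $\varpi$ yields $d\W=0$'' compresses exactly that globalization, which is where the actual work lies since $Y$ is a stack and $\defZ\to\Ct_{\AA^1}$ is only lci; also note that your intermediate ``if and only if'' with the set-theoretic critical locus is really only an implication in the direction you use it, because over the non-smooth locus $h^0(Ln^*\LL_{Z/C})$ need not be locally free.
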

After describing the points of $\cM(\sigma)$ in more detail, we will prove each containment in Proposition \ref{prop:degeneracy} separately. The forward containment uses our assumption that $X \cap Y^0$ is smooth. 

Let $(C, n, c)$ be a closed point in $\cM$, with $C$ a twisted curve, $n\colon C \rightarrow \defZ$ a map over $\Ct_{\AA^1}$ and $c \in \AA^1$ a closed point. By definition, $(C, n, c)$ is in $\cM(\sigma)$ if the fiber $\sigma|_{(C, n, c)}$ vanishes.
We have a commuting diagram
\[
\begin{tikzcd}
Z \arrow[r, "\W_Z"] \arrow[d] & \H \arrow[d] \\
C \times \defY_c \arrow[r] & C
\end{tikzcd}
\]
where $Z = \defZ \times_{\Ct_{\AA^1}} C$, $\H\rightarrow C$ is the vector bundle of $\omega_C$, and $\W_Z$ is the restriction of $\W$.
Restricting \eqref{eq:defcoseccomplex}, we see that $\sigma|_{(C, n, c)}$ is $h^1$ of the map \[
(R \pi_*(Ln^*\LL_{Z/C}\otimes \omegabul))^\vee \rightarrow (R \pi_*\OO_C[1])^\vee
\]
followed by a quasi-isomorphism (we have used flatness of $\defZ \rightarrow \Ct_{\AA^1}$).
Both the source and target of this map are (quasi-isomorphic to) complexes of vector bundles in [0,1]. Cancelling the shifts and taking duals, we see that the fiber of the cosection vanishes if and only if
\begin{equation}
R^0\pi_*(\OO_C) \rightarrow R^0\pi_*(Ln^*\LL_{Z/C} \otimes \omega_C)
\end{equation}
is zero. 
Since base points are discrete, the sheaf $h^i(\omega_C \otimes n^*\LL_{Z/C})$ is torsion if $i\neq 0$. Using a spectral sequence, we obtain that 
\[
R^0 \pi_*(Ln^*\LL_{Z/C}\otimes \omega_C) = R^0 \pi_* \big(h^0(Ln^*\LL_{Z/C}\otimes \omega_C) \big).
\]
Therefore, $\sigma|_{(C, n, c)} = 0$ if and only if
\begin{equation}\label{eq:degeneracy1}
h^0(Ln^*(d\W_Z))\colon \omega_C^\vee \rightarrow h^0(Ln^*\LL_{Z/C})
\end{equation}
vanishes, where $d\W_Z$ denotes the canonical map of cotangent complexes induced by $\W_Z$.

The forward containment of Proposition \ref{prop:degeneracy} follows from the next lemma and Lemma \ref{lem:critical-loci} (the notation is defined in Section \ref{sec:superpotential}).
\begin{lemma}
If $(C, n, c)$ is in $\cM(\sigma)$, then $n$ factors through $\Ct_{\AA^1} \times_{\AA^1} \overline{Crit(W^0)}.$
\end{lemma}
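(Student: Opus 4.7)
The plan is to restrict to the open dense locus $C^0 \subset C$ where $n$ lands in the smooth part $\defZ^0$, translate the vanishing of \eqref{eq:degeneracy1} there into a critical locus condition, and then take closures. I begin by identifying $C^0$. Let $f\colon C \to Y$ be the map obtained by composing $n$ with the projections $\defZ \to \defY \to Y$. The preimage of $Y^0$ in $\defY$ is precisely $\defY^0$: over any $c \neq 0$, $\defY_c \cong Y$ and $\defY^0_c = Y^0$; while over $c = 0$, the map $\defY_0 \cong N_{X/Y} \to X \hookrightarrow Y$ has preimage $N_{X/Y}|_{X\cap Y^0} = N_{X\cap Y^0/Y^0} = \defY^0_0$ since $Y^0 \subset Y$ is open. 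Hence $C^0 := n^{-1}(\defZ^0) = f^{-1}(Y^0)$ is open and dense in $C$ by condition (3) of Section \ref{sec:introdefs}.

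On $C^0$, the projection $\defZ^0 \to \Ct_{\AA^1}$ is smooth (being a vector bundle over the smooth stack $\Ct_{\AA^1}\times_{\AA^1}\defY^0$), so $\LL_{\defZ^0/\Ct_{\AA^1}}$ is locally free in degree $0$. Consequently $Ln^*\LL_{Z/C}|_{C^0} = n^*\Omega_{Z/C}|_{C^0}$, and the vanishing of \eqref{eq:degeneracy1} restricts to the vanishing of the honest pullback differential
\[
n^*(d\W_Z)|_{C^0}\colon \omega_C^\vee|_{C^0} \to n^*\Omega_{Z/C}|_{C^0}.
\]
Because $n$ is a section of $Z \to C$, this vanishing says precisely that for every closed point $x \in C^0$, the fiber differential $d\W_Z|_{n(x)}$ is zero, i.e.\ $n(x)$ lies in the relative critical locus of $\W_Z$ over $C$.

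By base change, $Crit(\W_Z/C) = Z\times_{\defZ}Crit(\W/\Ct_{\AA^1})$, so it suffices to identify $Crit(\W/\Ct_{\AA^1})$ on $\defZ^0$. \'Etale-locally on $\Ct_{\AA^1}$, a trivialization of the $\CC^*$-torsor $\varpi$ identifies $\defZ$ with $\Ct_{\AA^1}\times_{\AA^1}(\defE)^\vee$ and carries $\W$ to $W$; the induced identification of critical loci is well defined thanks to the $\CC^*$-equivariance of $W$. Under this identification, $Crit(\W/\Ct_{\AA^1})|_{\defZ^0}$ corresponds to $\Ct_{\AA^1}\times_{\AA^1}Crit(W^0)$, and we conclude $n(C^0) \subset \Ct_{\AA^1}\times_{\AA^1}Crit(W^0)$. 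Since $C^0$ is dense in $C$ and $n$ is continuous, $n(C) \subset \Ct_{\AA^1}\times_{\AA^1}\overline{Crit(W^0)}$, yielding the desired factorization. The one technical subtlety is the descent of the critical locus under the $\omega$-twist in this last paragraph, which hinges on the $\CC^*$-equivariance of $W$; once this is in hand, the remainder is routine.
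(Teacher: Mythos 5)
Your proof is correct and follows essentially the same route as the paper's: restrict to the open dense locus $C^0 = f^{-1}(Y^0)$ where the target is smooth over the curve, interpret the vanishing of \eqref{eq:degeneracy1} there as factoring through the relative critical locus of $\W$, use the $\CC^*$-equivariance of $W$ to identify that critical locus with (the pullback of) $Crit(W^0)$, and conclude by taking closures. The paper packages the last identification via the global $\varpi$-torsor quotient rather than local trivializations, but this is only a cosmetic difference, and your explicit appeal to condition (3) for the density of $C^0$ makes the closure step cleaner.
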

\begin{proof}

Let $Z^0 = Z \times_{C\times \defY_c}(C\times \defY^0_c)$ and let $C^0 \subset C$ be the complement of the base locus---i.e., $C^0 = n^{-1}(\defZ_0)$. If \eqref{eq:degeneracy1} vanishes, its restriction to $C^0$ also vanishes. But the restriction $h^0(Ln^*\LL_{Z/C})|_{C^0}$ is equal to $\Omega_{Z^0/C^0}^1$ since $Z^0 \rightarrow C^0$ is smooth and Deligne--Mumford, so we know that if \eqref{eq:degeneracy1} vanishes then
\[
n^*_{C^0}d\W_{Z^0} \colon n^*_{C^0} \W_{Z^0}^*\Omega^1_C|_{C^0} \rightarrow n^*_{C^0}\Omega^1_{Z^0/C^0}
\]
vanishes; that is, $n_{C^0}$ factors through $Crit(\W_{Z^0})$.

On the other hand, recall that we have the following diagram:
\[
\begin{tikzcd}
\varpi \times (E^0)^\vee \arrow[d] \arrow[r, "id \times W^0"] & \varpi \times \CC\arrow[d]\\
Z^0  \arrow[r, "\W_{Z^0}"] & Vb(\omega_C)|_{C^0}
\end{tikzcd}
\]
where $\varpi$ is the $\CC^*$-torsor on $C$ such that $\varpi \times_{\CC^*} \CC = \omega_C$ where $\CC^*$ acts on $\CC$ via multiplication.
This diagram realizes the top row as a $\CC^*$-torsor over the bottom row; in particular the square is fibered. By functoriality of the map of cotangent sheaves, the locus $Crit(\W_{Z^0})$ is the quotient of $\varpi \times Crit(W^0)$ by $\CC^*$. We know from Lemma \ref{lem:critical-loci} that $Crit(W^0)$ is $\CC^*$-invariant, so this quotient is precisely $C \times Crit(W^0)$. Hence, by the previous paragraph, $n_{C^0}$ factors through $C\times Crit(W^0)$.
 Taking closure, topology forces $n$ to factor through $C \times \overline{Crit(W^0)}$.
\end{proof}

For the backwards containment of Proposition \ref{prop:degeneracy}, we claim that if $(C, n, c)$ is a closed point in $\cM$ such that $n$ factors through $\Ct_{\AA^1}\times X$, then the map $h^0(Ln^*(d\W_Z))$ of \eqref{eq:degeneracy1} is zero. The claim is a consequence of the following lemma.

\begin{lemma}\label{lem:deriv_vanishes}
  Let $Y \to S$ be a morphism of algebraic stacks, let $L$ be a line
  bundle on $S$, and let $E$ be a vector bundle on $Y$ with a section
  $s$ with zero locus $X$.
  Let $W\colon L \otimes E^\vee \to L$ be the morphism induced by $s$.
  Then $dW|_X\colon L^\vee \cong \LL_{L/S}|_X \to \LL_{Y/S}|_X$ is the
  zero morphism.
\end{lemma}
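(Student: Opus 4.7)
The plan is to identify the map $dW|_X$ with the differential (restricted to $X$) of a composition $Y \to L$ that manifestly factors through $S$, and is therefore zero.

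Let $p\colon L \otimes E^\vee \to Y$ denote the structural projection of the vector bundle and $0\colon Y \hookrightarrow L \otimes E^\vee$ its zero section. Since $p$ is smooth with $p \circ 0 = \mathrm{id}_Y$, the cotangent-complex triangle for $L \otimes E^\vee \to Y \to S$ pulled back along $0$ is split, giving a canonical splitting $0^*\LL_{L \otimes E^\vee/S} \simeq \LL_{Y/S} \oplus L^\vee \otimes E$. Pulling back $dW\colon W^*\LL_{L/S} \to \LL_{L \otimes E^\vee/S}$ along $0$ and projecting onto the $\LL_{Y/S}$-summand yields a map $(W \circ 0)^*\LL_{L/S} \to \LL_{Y/S}$, which by functoriality of the cotangent complex is precisely the differential of the composition $W \circ 0\colon Y \to L$. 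After further restriction to $X \subset Y$, this map is exactly the map $dW|_X$ of the statement, and the identification $(W \circ 0)^*\LL_{L/S}|_X \cong L^\vee$ comes from the fact that $W \circ 0$ lands in the zero section $0_L \subset L$.

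The key observation is that $W \circ 0$ is the constant zero map: for any $y \in Y$ we have $W(y, 0) = \langle 0, s(y) \rangle = 0 \in L_{\pi(y)}$, regardless of the value of $s(y) \in E_y$. Hence $W \circ 0$ factors as $Y \xrightarrow{\pi} S \xrightarrow{0_L} L$, and its differential $(W \circ 0)^*\LL_{L/S} \to \LL_{Y/S}$ therefore factors through $\pi^*\LL_{S/S} = 0$, vanishing on all of $Y$. Restricting to $X$ preserves the vanishing.

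This argument is essentially formal manipulation of cotangent complexes, with no real obstacle; the content is the elementary observation that $W$ restricted to the zero section of $L \otimes E^\vee$ is itself the zero section of $L$. In particular, no hypothesis on $X$ or on the section $s$ enters---the vanishing of this component of $dW$ holds on all of $Y$.
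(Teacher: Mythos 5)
There is a genuine gap: you prove the vanishing of only one of the two components of $dW$ along the zero section, and it is the other component that carries the content of the lemma. In your splitting $0^*\LL_{L\otimes E^\vee/S}\simeq \LL_{Y/S}\oplus (L^\vee\otimes E)$, the differential of $W$ restricted to the zero section has a $\LL_{Y/S}$-component, locally $\sum_i p_i\,ds_i$, which is indeed identified with $d(W\circ 0)$ and vanishes on all of $Y$ exactly as you argue; but it also has a fibre-direction component landing in $L^\vee\otimes E=\LL_{(L\otimes E^\vee)/Y}|_{0}$, locally $\sum_i s_i\,dp_i$, which is $\mathrm{id}_{L^\vee}\otimes s$ and vanishes precisely on $X$, not on all of $Y$. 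Your proposal projects this component away and then concludes that ``no hypothesis on $X$ or on the section $s$ enters.'' That conclusion is the tell: if the full $dW$ vanished along the whole zero section, the cosection of Section~\ref{sec:cosec} would be identically zero, contradicting Proposition~\ref{prop:degeneracy}, whose degeneracy locus is $\cM(X)\times\AA^1$ rather than all of $\cM$.

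The stated target $\LL_{Y/S}|_X$ appears to be a typo: the paper's own proof computes the full differential $dW_1=\langle p,ds\rangle+\langle s,dp\rangle$ and uses the vanishing of \emph{both} $p$ and $s$ on $X_1$, and the application (showing that \eqref{eq:degeneracy1} vanishes when $n$ factors through the zero section over $X$) requires the full map into $\LL_{L\otimes E^\vee/S}|_X$, fibre directions included. So your argument proves only the literal, weakened reading of the statement. To close the gap you must also show that the $L^\vee\otimes E$-component of $0^*dW$ restricts to zero on $X$; this is immediate from $s|_X=0$ (and is where the hypothesis on $X$ actually enters), but it is a separate and essential step. Note also that the paper's proof is structured to handle the stack-theoretic generality by reducing to a universal example over $[\CC^n/GL(n)]\times B\CC^*$ via functoriality of the cotangent complex; if you keep your splitting-based approach, you still need to justify the local computation of the fibre component in that generality, e.g.\ by a similar reduction.
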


\begin{proof}
As a first example, let $Y_1 = \CC^n$ with coordinates
  $x_1, \dotsc, x_n$, let $S_1$ be a point and $L_1 = \CC$ be trivial, set
  $E_1 = Y_1 \times Y_1$ and $s_1 = (x_1, \dotsc, x_n)$, so that
  $X_1$ is the origin.
  Then $E_1^\vee\otimes L_1  =  Y_1 \times Y_1^\vee\otimes L_1 \cong \CC^n \times \CC^n$ with coordinates
  $x_1, \dotsc, x_n, p_1, \dotsc, p_n$, and $W_1\colon E_1^\vee\otimes L_1 \to L_1$
  is given by
  \begin{equation*}
    W_1(x_1, \dotsc, x_n, p_1, \dotsc, p_n)
    = p_1 x_1 + \dotsb + p_n x_n
    = \langle p, s\rangle,
  \end{equation*}
  where $p = (p_1, \dotsc, p_n)$.
  It follows that
  $dW_1\colon \LL_{L_1/S_1}|_{E_1^\vee} = \OO_{E_1^\vee} \to \Omega_{E_1^\vee} =
  \LL_{L_1 \otimes E_1^\vee/S_1}$ is given by
  \begin{equation*}
    dW_1 = \sum_{i = 1}^n (p_i dx_i + x_i dp_i)
    = \langle p, ds\rangle + \langle s, dp\rangle,
  \end{equation*}
  and hence $dW_1|_{X_1}=0$ since $p$ and $s$ vanish on $X_1$.
  
  Now consider a second example that is the quotient of the above example. We let $\CC^*$ act on $L_1 \rightarrow S_1$ by scaling $L_1 \cong \CC$, and we let $G = GL(n)\times \CC^*$ act diagonally on $E_1 \rightarrow Y_1$ via the standard representation of $GL(n)$ on $Y_1$ and the trivial action of $\CC^*$. In the resulting example we have $Y_2 = [Y_1/G]$, $S_2 = B\CC^*$, $L_2 = [L_1/\CC^*]$, and $E_2 = [(Y_1 \times Y_1)/G].$ The section $s_2$ is still the diagonal one and its vanishing locus $X_2$ is $BG \subset Y_2$. Note that $E_2^\vee \otimes L_2 = [(Y_1 \times Y_1^\vee\otimes L_1)/G]$.
  We wish to show that the map of cotangent complexes $dW_2$ for the diagram
  \begin{equation}\label{eq:deriv_vanishes1}
    \begin{tikzcd}
      E_2^\vee \otimes L_2 \arrow[r, "W_2"] \arrow[d] & {[L_1/\CC^*]} \arrow[d] \\
      B\CC^* \arrow[r, equal] & B\CC^*
    \end{tikzcd}
  \end{equation}
  vanishes after pulling back to $X_2$. For this it suffices to check that the pull-back $(d W_2)|_{X_1}$
  of $(dW_2)|_{X_2}$ along $X_1=pt \to X_2$ vanishes. Consider the commutative diagram of solid arrows where the bottom
  sequence is the pullback along $X_1 \rightarrow E_1^\vee \otimes L_1$ of triangle associated to the quotient map
  $E_1^{\vee}\otimes L_1 \to E_2^{\vee}\otimes L_2$:
  \[
  \xymatrix{
  && \LL_{L_1}|_{X_1} \ar@{-->}[lld]_{\varphi} \ar[d]^{(d W_2)|_{X_1}} \ar[rrd]^{(d W_1)|_{X_1} = 0} && & \\
  \LL_{(E_1^{\vee}\otimes L_1)/(E_2^{\vee}\otimes L_2)}|_{X_1}[-1] \ar[rr] && \LL_{E_2^{\vee}\otimes L_2}|_{X_1} \ar[rr] && \LL_{E_1^{\vee}\otimes L_1}|_{X_1} \ar[r] &
  }
  \]
  The vanishing $(d W_1)|_{X_1} = 0$ implies that $(d W_2)|_{X_1}$
  factors through $\varphi$.
  Since both $\LL_{L_1}$ and
  $\LL_{(E_1^{\vee}\otimes L_1)/(E_2^{\vee}\otimes L_2)}|_{X_1}$ are
  locally free in degree $0$, we observe $\varphi = 0$, hence
  $(d W_2)|_{X_1} = 0$.
  
  Finally, we claim that the general case factors through the second example. Indeed, given $Y, S, E, L$, and $s$ as in the statement of Lemma \ref{lem:deriv_vanishes}, we get a commuting cube whose left and right sides are fibered:
  \[
  \begin{tikzcd}[row sep=tiny, column sep=tiny]
& E_2^\vee \otimes L_2  \arrow[rr, "W_2"] \arrow[dd] & & L_2  \arrow[dd] \\
E^\vee \otimes L \arrow[rr, crossing over, "\quad \quad W"]\arrow[ur] \arrow[dd] & & L \arrow[ur]\\
& Y_2  \arrow[rr] & & S_2  \\
Y \arrow[rr]\arrow[ur] & & S \arrow[ur]\arrow[from=uu, crossing over]\\
\end{tikzcd}
\]
  The map $Y \rightarrow Y_2 = [Y_1/G] = [Y_1/GL(n)]\times B\CC^*$ is induced the vector bundle $E$ and its section $s$ (which yield a map $Y \rightarrow [Y_1/GL(n)]$) and the pullback of $L$ (which yields the map $Y \rightarrow B\CC^*$). 
  From the functoriality of the cotangent complex (see e.g. \cite[Lem~2.2.12]{webb-thesis} we get a commuting square
  \[
  \begin{tikzcd}
  \LL_{L_2/S_2} \arrow[r, "dW_2"] \arrow[d, "\sim"] & \LL_{E_2^\vee\otimes L_2/S_2} \arrow[d] \\
  \LL_{L/S} \arrow[r, "dW"] & \LL_{E^\vee \otimes L/S}
  \end{tikzcd}
  \]
  Since $dW_2$ vanishes, so does $dW$.
\end{proof}

\subsection{Specialization}
We can specialize $\sigma$ to obtain relative cosections $\sigma_c\colon \EE_{\cM_c/\Mgnt}^\vee \rightarrow \OO_{\cM_c}[-1]$. Let $\iota_c:\cM_c\rightarrow \cM$ be the inclusion.
\begin{definition}
The \textit{specialization} $\sigma_c$ of $\sigma$ is the restriction
\[
\sigma_c \colon L\iota_c^*\EE_{\cM/\Mgnt_{\AA^1}}^\vee \xrightarrow{L\iota_c^*\sigma} L\iota_c^*(R \pi_*(\OO_{\C_{\cM}}[1]))^\vee \xrightarrow{} L\iota_c^*(\OO_{\cM}[-1]) = \OO_{\cM_c}[-1].
\]
Applying $h^1$ we get a cosection $\sigma_c^1$ of $Ob_{\cM_c/\Mgnt}$.
\end{definition}

We could also define a cosection for $\EE^\vee_{\cM_c/\Mgnt}$ by applying the construction \eqref{eq:defcosec} to the restriction $\W_c\colon \defZ_c \rightarrow \H_c$ of $\W$, using the identification in \eqref{eq:specialpot}. Call this alternate cosection $\rho$. 

\begin{lemma} The cosections $\sigma_c$ and $\rho$ are canonically isomorphic.
\end{lemma}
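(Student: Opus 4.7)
The plan is to show that each of the individual ingredients used to build $\sigma$ in \eqref{eq:defcoseccomplex} is compatible with the base-change $L\iota_c^*$, so that applying $L\iota_c^*$ to the construction on $\cM = \cM(\defY,\defsE)$ recovers the very same construction carried out on the fiber $\cM_c$. Under the identification \eqref{eq:specialpot}, this identifies $\sigma_c$ with $\rho$.

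More concretely, I would proceed as follows. First, let $\iota_{c,\C}\colon \C_{\cM_c}\to \C_{\cM}$ denote the induced inclusion of the universal curves, so that we have the tor-independent cartesian square of $\pi_c = \pi\circ\iota_{c,\C}$ above $\iota_c$. Since $\pi$ is flat and the universal section of $\defZ$ pulls back to the universal section of $\defZ_c$ (both $\defZ\to\Ct_{\AA^1}$ and $\defZ_c\to\Ct$ being flat vector-bundle data), flat base change (e.g.\ \cite[Cor~4.13]{HR17}) gives a canonical isomorphism
\[
L\iota_c^* R\pi_*(-) \xrightarrow{\ \sim\ } R\pi_{c,*}\, L\iota_{c,\C}^*(-).
\]
Second, compatibility of the cotangent complex with flat base change yields canonical isomorphisms $L\iota_{c,\C}^* L\n^*\LL_{\defZ/\Ct_{\AA^1}} \cong L\n_c^*\LL_{\defZ_c/\Ct}$ and $L\iota_{c,\C}^* L\n^* L\W^*\LL_{\H/\Ct_{\AA^1}} \cong L\n_c^*L\W_c^*\LL_{\H_c/\Ct}$; moreover, the relative dualizing complex $\omegabul$ pulls back along $\iota_{c,\C}$ to $\omegabul_{\cM_c}$ since $\Ct_{\AA^1}\to\Mgnt_{\AA^1}$ is a flat family of twisted curves.

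Third, assembling these, the canonical map $\omega^\vee = L\n^* L\W^*\LL_{\H/\Ct_{\AA^1}}\to L\n^*\LL_{\defZ/\Ct_{\AA^1}}$ and the adjunction map $\OO_{\cM}\to R\pi_*\OO_{\C_{\cM}}$ pull back along $L\iota_c^*$ to the corresponding morphisms on $\cM_c$, because formation of the differential of $\W$ and of the unit of adjunction both commute with flat base change on the base. Combining these compatibilities with \eqref{eq:specialpot} gives a commutative diagram identifying $L\iota_c^*\sigma$ with the composition \eqref{eq:defcoseccomplex} constructed from $\W_c\colon \defZ_c\to\H_c$, i.e.\ with $\rho$.

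The only subtlety I expect is bookkeeping with the tor-independent base change and with the shifts/duals, particularly making sure that the canonical map $(R\pi_*\OO_{\C_{\cM}}[1])^\vee\to\OO_{\cM}[-1]$, which relies implicitly on Grothendieck duality along $\pi$, is genuinely compatible with $L\iota_c^*$. This follows from base-change for the dualizing complex of a flat family of twisted curves, but writing it out carefully is the main task; once it is in place, the isomorphism $\sigma_c\cong\rho$ is canonical by construction.
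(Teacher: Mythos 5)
Your proposal is correct and follows essentially the same route as the paper: the paper's proof likewise checks that the cotangent-complex map induced by $\W$ base-changes along $\iota_c$ (using flatness of $\defZ$ and $\H$ to get the vertical isomorphisms), then applies $R\pi_*((-)\otimes\omegabul)$, dualizes, and invokes compatibility of the adjunction/trace maps with base change (Lemma \ref{lem:technical}) to identify $\sigma_c$ with $\rho$. The "subtlety" you flag about the trace map is exactly what that appendix lemma supplies.
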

\begin{proof}
By the functoriality of the cotangent complex we have a commuting square
\[
\begin{tikzcd}
L\iota_c^*L\W^*\LL_{\H/\Ct_{\AA^1}} \arrow[r] \arrow[d,"\sim"] &  \arrow[d, "\sim"] L\iota_c^*\LL_{\defZ/\Ct_{\AA^1}} \\
L\W_c^*\LL_{\H_c/\Ct} \arrow[r] & \LL_{\defZ_c/\Ct}
\end{tikzcd}
\]
where all arrows are canonical morphisms of cotangent complexes except for the left arrow, which differs from a canonical arrow by a quasi-isomorphism. The two vertical arrows are isomorphisms because $\defZ$ and $\H$ are both flat over $\Ct$.
After applying $L\n_c^*$, tensoring with $\omegabul$, applying $R (\pi_c)_*$ and dualizing, we obtain the square
\[
\begin{tikzcd}
L\iota_c^*(R\pi_*\OO_{\C_{\cM}}[1])^\vee  &   \EE_{\cM_c/\Mgnt}^\vee \arrow[l] \\
(R(\pi_c)_*\OO_{\C_{\cM_c}}[1])^\vee \arrow[u, "\sim"]& (R(\pi_c)_* (L\n_c^* \LL_{\defZ_c/\Ct}\otimes \omegabul))^\vee\arrow[l] \arrow[u, "\sim"]
\end{tikzcd}
\]
where the right vertical arrow is the dual of the left vertical arrow in \eqref{eq:specialpot}.
Composing with the adjunction homomorphisms, the top horizontal arrow
becomes $\sigma_c$, while the bottom horizontal arrow becomes
$\rho$.
\end{proof}

This together with Lemma \ref{lem:general-vanishing} implies that the relative cosections $\sigma_c$ have the property that the map of cones induced by $\sigma_c \circ \phi_c^\vee$ is zero. Moreover, combining this result with Lemma \ref{lem:computecosec} and Proposition \ref{prop:degeneracy} we have the following.
\begin{corollary}\label{cor:specialcosec}
The specialization $\sigma_c$ for $c\neq 0$ is induced by the section $s \in \Gamma(Y, \E)$, and $\sigma_0$ is induced by the tautological section of $\E|_{N_{X/Y}} \simeq \E|_{E_X}$. In either case, the degeneracy locus $\cM_c(\sigma_c)$ of the specialized cosection is equal to $\cM(X) \subset \cM_c$.
\end{corollary}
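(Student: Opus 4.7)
The plan is to combine the isomorphism $\sigma_c \cong \rho$ established in the preceding lemma with the fiber-by-fiber description of $s_\defY^-$ given by Lemma \ref{lem:computecosec}, and then read off the degeneracy locus from Proposition \ref{prop:degeneracy}.

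For the first assertion, I would unpack the construction of $\rho$. By definition, $\rho$ is obtained by applying the general recipe \eqref{eq:defcoseccomplex} to the restricted map $\W_c \colon \defZ_c \to \H_c$, and tracing through Section \ref{sec:superpotential} shows that $\W_c$ is precisely the map induced by the dual of $s_\defY^-|_{\defY_c}$, viewed as a section of $\defsE|_{\defY_c}$. Thus identifying $\sigma_c$ reduces to identifying $s_\defY^-|_{\defY_c}$.

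For $c \neq 0$, the isomorphism $\defY_c \cong Y$ in \eqref{eq:target-fiber} combined with the fact that $\OO(-\defY_0)$ is canonically trivial away from the origin yields $\defsE|_{\defY_c} \cong \E$; under this identification $s_\defY^-|_{\defY_c}$ pulls back to $s$, as one checks directly from the local description in the proof of Lemma \ref{lem:computecosec}. For $c = 0$, the second half of Lemma \ref{lem:computecosec} states exactly that $s_\defY^-|_{\defY_0}$ becomes the tautological section under $\defY_0 \cong N_{X/Y} \cong E|_X$. Together with $\sigma_c \cong \rho$, this gives the first half of the corollary.

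For the degeneracy locus, $\sigma_c = L\iota_c^*\sigma$ by definition, so the fiber of $\sigma_c^1$ at a closed point $x \in \cM_c$ coincides (under canonical isomorphism) with the fiber of $\sigma^1$ at $\iota_c(x) \in \cM$; the second arrow in \eqref{eq:defcosec} is a quasi-isomorphism, and this is preserved by pullback. Consequently $\cM_c(\sigma_c) = \cM(\sigma) \cap \cM_c$, and Proposition \ref{prop:degeneracy} identifies this with $(\cM(X) \times \AA^1) \cap \cM_c = \cM(X)$ under the identifications of Section \ref{sec:special1}. Since the substantive content—namely the total degeneracy computation and the functoriality $\sigma_c \cong \rho$—has already been carried out, no real obstacle remains; the only point requiring care is to verify that the identification $\sigma_c \cong \rho$ is compatible with the sources/targets one uses to read off degeneracy, which is immediate from the construction.
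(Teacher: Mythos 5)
Your proposal is correct and follows essentially the same route as the paper: the authors likewise deduce the corollary by combining the preceding lemma identifying $\sigma_c$ with the cosection $\rho$ built directly from $\W_c$, Lemma \ref{lem:computecosec} for the identification of $s_\defY^-$ on the fibers, and Proposition \ref{prop:degeneracy} restricted to $\cM_c$ for the degeneracy locus. The extra detail you supply (unpacking $\rho$ via $\W_c$ and checking that degeneracy is a fiberwise condition compatible with restriction) is exactly the implicit content of the paper's one-line justification.
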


\subsection{Cosection localized virtual cycles}

We have the necessary data to define cosection localized virtual cycles on $\cM = \cM(\defY, \defsE)$ and its fibers $\cM_c$. Specifically, set
\[
[\cM]^{\vir}_{\sigma} := 0^!_{\sigma}([\mathfrak{C}_{\cM/\Mgnt_{\AA^1}}])\in A_*(\cM(X)\times \AA^1) \quad \quad \quad [\cM_c]^\vir_{\sigma_c}:= 0^!_{\sigma_c}([\mathfrak{C}_{\cM_c/\Mgnt}]) \in A_*(\cM(X))
\]
where the cosection localized Gysin maps $0^!_\bullet$ are defined in \cite[(3.5)]{KiLi13}. By Proposition \ref{prop:cos-loc-func} these classes agree with the ones defined in \cite{KiLi13} using the corresponding absolute obstruction theory and cosection, so that in particular
by \cite[Theorem~5.2]{KiLi13}, we have 
\begin{equation}\label{eq:proof-of-1}
\iota_* [\cM]^\vir_\sigma = [\cM]^\vir  \quad \quad \quad \iota_*[\cM_c]^\vir_{\sigma_c}=[\cM_c]^\vir,
\end{equation}
where the virtual classes $[\cM]^\vir$ and $[\cM_c]^\vir$ were defined in Corollary \ref{cor:virtual_cycles}. Recall that by definition, $[\cM(Y, E)]^\vir$ is equal to the class $[\cM_c]^\vir$ for $c \neq 0$.

These cosection localized virtual cycles are related by the following lemma, which is \cite[Theorem~4.6]{CL18}. Our proof below follows exactly that of (loc. cit.); we include it for completeness.
\begin{lemma}\label{lem:comparevc}
The constructed virtual cycles satisfy
\[
\iota_c^![\cM]^\vir_{\sigma} = [\cM_c]^\vir_{\sigma_c}
\]
where $\iota_c^!$ is the Gysin map for the regular embedding $\iota_c\colon \cM_c \rightarrow \cM$.
\end{lemma}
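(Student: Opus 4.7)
The plan is to deduce the identity from the general compatibility of cosection-localized Gysin maps with pullback along regular embeddings, specialized to the fiber inclusion $\iota_c$. Since $\cM_c = \cM \times_{\AA^1} \{c\}$ and $\{c\} \hookrightarrow \AA^1$ is a regular embedding of codimension one with trivial normal bundle, so is $\iota_c\colon \cM_c \hookrightarrow \cM$, and $\iota_c^!$ is the usual refined Gysin pullback.

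First, I would assemble the compatibility of all the data defining the two sides. By \eqref{eq:specialpot} the relative perfect obstruction theory on $\cM_c/\Mgnt$ is identified with $L\iota_c^* \phi$ (composed with the canonical arrow $L\iota_c^* \LL_{\cM/\Mgnt_{\AA^1}} \to \LL_{\cM_c/\Mgnt}$), and by definition $\sigma_c = L\iota_c^*\sigma$. Next, since $\Mgnt_{\AA^1} = \Mgnt \times \AA^1 \to \Mgnt$ is smooth, applying $h^1/h^0$ to the triangle
\[
L\iota_c^*\LL_{\cM/\Mgnt_{\AA^1}} \longrightarrow \LL_{\cM_c/\Mgnt} \longrightarrow \LL_{\cM_c/\cM}
\]
and using that $\LL_{\cM_c/\cM}[-1] = N^\vee_{\cM_c/\cM}$ is a trivial line bundle placed in degree $1$ yields the key compatibility
\[
\iota_c^* \coneC_{\cM/\Mgnt_{\AA^1}} = \coneC_{\cM_c/\Mgnt}
\]
as closed substacks of $h^1/h^0(L\iota_c^*\EE^\vee_{\cM/\Mgnt_{\AA^1}}) = h^1/h^0(\EE^\vee_{\cM_c/\Mgnt})$, where the pullback is refined (cf.\ Kim--Kresch--Pantev or \cite[Prop~7.1]{BF97}).

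With these compatibilities, it remains to verify that $\iota_c^!$ commutes with the cosection-localized Gysin map $0^!_\sigma$, in the sense that $\iota_c^! \circ 0^!_\sigma = 0^!_{\sigma_c} \circ \iota_c^!$ applied to the class $[\coneC_{\cM/\Mgnt_{\AA^1}}]$. Under our compatibilities, the right-hand side evaluates to $0^!_{\sigma_c}([\coneC_{\cM_c/\Mgnt}]) = [\cM_c]^\vir_{\sigma_c}$, while the left-hand side evaluates to $\iota_c^!([\cM]^\vir_\sigma)$, which gives the lemma. This commutation is most conveniently established using the simplified construction of $0^!_\sigma$ from Appendix~\ref{sec:cosection-localization} (where the cosection is everywhere defined): the localized Gysin map is built from standard flat pullback and intersection operations on the kernel subbundle of the cosection, all of which commute with refined regular embedding Gysin maps.

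The main obstacle is the last step: checking that $\iota_c^!$ commutes with $0^!_\sigma$ at the level of cycle operations. The subtlety is that the bundle containing the cone is not fixed as $c$ varies; one has to use that the kernel of $\sigma$ on $\cM$ restricts to the kernel of $\sigma_c$ on $\cM_c$ (which follows from $\sigma_c = L\iota_c^*\sigma$) and that the cycle class of $\coneC_{\cM/\Mgnt_{\AA^1}}$ behaves well under refined pullback by $\iota_c^!$. Granting these, the commutation is formal from the bivariant formalism, and the lemma follows.
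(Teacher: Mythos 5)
Your reduction to ``$\iota_c^!$ commutes with $0^!_\sigma$'' is the right target, but the key compatibility you use to get there --- the identification $\iota_c^*\coneC_{\cM/\Mgnt_{\AA^1}} = \coneC_{\cM_c/\Mgnt}$ as closed substacks of $h^1/h^0(\EE^\vee_{\cM_c/\Mgnt})$ --- is false in general, and the argument you give for it (applying $h^1/h^0$ to the cotangent triangle and noting that $\LL_{\cM_c/\cM}[-1]$ is a trivial line bundle) only controls the intrinsic normal \emph{sheaves}, not the cones sitting inside them. The intrinsic normal cone does not commute with restriction to a fiber unless $\cM$ is flat over $\AA^1$ near $\cM_c$, and flatness of the \emph{moduli space} over $\AA^1$ is precisely what is not known here (only the target $\defY \to \AA^1$ is flat); the interesting case $c=0$ is exactly where one expects the cone to jump. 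What is true is a Vistoli-type rational equivalence between $[\coneC_{\cM_c/\Mgnt}]$ and $\iota_c^![\coneC_{\cM/\Mgnt_{\AA^1}}]$, and the entire content of the lemma is that this rational equivalence is supported inside the kernel of the cosection, so that the \emph{localized} Gysin maps agree. That is not ``formal from the bivariant formalism'': $0^!_\sigma$ is a localized Gysin map, not an ordinary bivariant class, and its compatibility with $\iota_c^!$ is a theorem, namely \cite[Thm~5.2]{KiLi13}.

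The paper's proof (following \cite[Thm~4.6]{CL18}) therefore does not attempt to compare cones directly. It first uses Proposition \ref{prop:cos-loc-func} to replace $(\EE_{\cM/\Mgnt_{\AA^1}},\sigma)$ by a compatible obstruction theory $(\EE_{\cM/\Mgnt},\rho)$ relative to $\Mgnt$ with the same localized class, then restricts the resulting morphism of distinguished triangles to $\cM_c$ to produce exactly the compatibility datum \eqref{eq:comparevc} required by \cite[Thm~5.2]{KiLi13}, and concludes by citing that theorem. To repair your proposal you would either need to invoke \cite[Thm~5.2]{KiLi13} at the last step (after setting up the compatible triple, as the paper does), or reprove the cosection-localized Vistoli rational equivalence from scratch; the literal equality of cones cannot be salvaged.
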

\begin{proof}
This argument follows that of \cite[Theorem~4.6]{CL18}.
We have morphisms of algebraic stacks $\cM \rightarrow \Mgnt\times \AA^1 \rightarrow \Mgnt$ and a relative perfect obstruction theory $\phi\colon \EE_{\cM/\Mgnt \times \AA^1} \rightarrow \LL_{\cM/\Mgnt\times \AA^1}$ with a cosection $\sigma$. By Proposition \ref{prop:cos-loc-func} there is a relative perfect obstruction theory $\EE_{\cM/\Mgnt} \rightarrow \LL_{\cM/\Mgnt}$ on $\cM$ with a cosection $\rho$ such that $\rho$ descends to an absolute cosection and $[\cM]_\rho^{\vir}=[\cM]_{\sigma}^{\vir}$. Moreover, restricting the morphism of distinguished triangles \eqref{eq:cos-loc-func1} to $\cM_c \subset \cM$, we obtain the top two rows of the following diagram of distinguished triangles:
\begin{equation}\label{eq:comparevc}
\begin{tikzcd}
\EE_{\cM/\Mgnt}|_{\cM_c} \arrow[r] \arrow[d] & \EE_{\cM/\Mgnt\times\AA^1}|_{\cM_c} \arrow[r] \arrow[d] & q^*\LL_{\Mgnt \times \AA^1/\Mgnt}[1]|_{\cM_c} \arrow[r]\arrow[d, equal] & {}\\
\LL_{\cM/\Mgnt}|_{\cM_c} \arrow[r] \arrow[d, equal] & \LL_{\cM/\Mgnt\times \AA^1}|_{\cM_c} \arrow[r] \arrow[d] & q^*\LL_{\Mgnt \times \AA^1/\Mgnt}[1]|_{\cM_c} \arrow[r] \arrow[d, dashrightarrow]& {}\\
\LL_{\cM/\Mgnt}|_{\cM_c} \arrow[r]& \LL_{\cM_c/\Mgnt} \arrow[r] & \LL_{\cM_c/\cM} \arrow[r] & {}
\end{tikzcd}
\end{equation}
the bottom row is the canonical triangle and the dotted arrow is induced by the mapping cone axiom. The middle column is precisely the specialization $\phi_c$ in \eqref{eq:specialpot} which by Proposition \ref{prop:defspace} is isomorphic to the perfect obstruction theory on $\cM_c$. The morphisms between the top and bottom rows of \eqref{eq:comparevc} are the compatibility required to apply \cite[Thm~5.2]{KiLi13}. By that result, $\iota^![\cM]^{\vir}_\rho = [\cM_c]^\vir_{\sigma_c}$ as desired. 
\end{proof}

\section{Calculation in the special fiber}
\label{sec:specialfiber}

In this section we investigate the special fiber $\cM_0$ of $\cM = \cM(\defY, \defsE)$. Recall from \eqref{eq:specialm0} that
\[
\cM_0 = \cM(X) \times_{\Sec{\Ct\times X}{\Ct}} \Sec{\defZ_0}{\Ct} \quad \quad \text{with} \quad \defZ_0 = Vb_{\Ct \times X}(\E_X \oplus \omega \otimes \E_X^\vee).
\]
Recall from Proposition \ref{prop:defspace} and \eqref{eq:specialpot} that $\cM_0$ has a canonical relative perfect obstruction theory
\[
\phi_0\colon R \pi_*(L\n^*\LL_{\defZ_0/\Ct}\otimes \omegabul) \rightarrow \LL_{\cM_0/\Mgnt}
\]
where $\n\colon \C_{\cM_0} \rightarrow \defZ_0$ is the universal section, and by Corollary \ref{cor:specialcosec} it carries a cosection $\sigma_0$ induced by the tautological section of $\E|_{E_X}$. The degeneracy locus of the cosection is contained in $\cM(X) \subset \cM_0$, embedded via the zero section. The goal of this section is to prove the following.

\begin{theorem}\label{thm:specialfiber}
  We have an equality of virtual classes
  \[
    [\cM_0]^{\vir}_{\sigma_0} = (-1)^{\chi(E, \cM(X))}[\cM(X)]^{\vir}\;\text{in}\;A_*(\cM(X)).
  \]
  Here, $(-1)^{\chi(E, \cM(X))} \in A^0(\cM(X))$ is the locally
  constant function defined via \eqref{eq:virtual-rank-intro}.
\end{theorem}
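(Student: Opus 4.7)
\textbf{Proof plan for Theorem \ref{thm:specialfiber}.}

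The plan is to apply a $\CC^*$-torus localization to the cosection-localized virtual cycle of $\cM_0$. Let $\CC^*$ act on $\defZ_0 = \Vb_{\Ct \times X}(\E|_X \oplus \omega \otimes \E^\vee|_X)$ by scaling the first summand with weight $+1$ and the second with weight $-1$. The induced action on $\Sec{\defZ_0}{\Ct}$, and hence on $\cM_0$, fixes the zero section $\cM(X) \subset \cM_0$, and by Corollary~\ref{cor:specialcosec} the fixed locus coincides with the degeneracy locus of $\sigma_0$. The superpotential $W \colon \defZ_0 \to \H$, which pairs the $E$-field against the $p$-field, is $\CC^*$-invariant, so the induced cosection $\sigma_0$ is naturally $\CC^*$-equivariant of weight zero.

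The first step is to verify that the perfect obstruction theory $\phi_0$ of \eqref{eq:specialpot} and the cosection $\sigma_0$ are $\CC^*$-equivariant. Both are essentially tautological consequences of the functoriality of the constructions of Sections~\ref{sec:pot} and~\ref{sec:cosec} applied to the $\CC^*$-equivariant universal target $\defZ_0$ and to the $\CC^*$-invariant superpotential $W$.

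Next, apply a $\CC^*$-equivariant version of the cosection-localized virtual cycle construction together with a torus localization formula in the style of \cite{CL12} (a general version was given by Chang--Kiem--Li). Since $\sigma_0$ vanishes on the fixed locus $\cM(X)$, the induced cosection there is trivial and the cosection-localized class on $\cM(X)$ is simply $[\cM(X)]^{\vir}$. The localization formula then reads
\begin{equation*}
[\cM_0]^{\vir}_{\sigma_0} \;=\; \frac{[\cM(X)]^{\vir}}{e_{\CC^*}\!\bigl(N^{\vir}_{\cM(X)/\cM_0}\bigr)}\bigg|_{t=0} \quad \text{in}\; A_*(\cM(X)),
\end{equation*}
with the specialization $t = 0$ well-defined because $N^{\vir}_{\cM(X)/\cM_0}$ has virtual rank zero. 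Indeed, at $[f \colon C \to X] \in \cM(X)$ the virtual normal bundle equals
\begin{equation*}
R\pi_*(f^*\E) \cdot \chi_{1} \;\oplus\; R\pi_*(f^*\E^\vee \otimes \omega) \cdot \chi_{-1},
\end{equation*}
and by relative Grothendieck--Serre duality $R\pi_*(f^*\E^\vee \otimes \omega) \simeq (R\pi_* f^*\E)^\vee[-1]$, so the two summands have opposite virtual ranks. A direct computation using a two-term locally free resolution of $R\pi_*(f^*\E)$ on $\cM(X)$, evaluating the equivariant Euler class at $t = 0$, yields
\begin{equation*}
e_{\CC^*}\!\bigl(N^{\vir}_{\cM(X)/\cM_0}\bigr)\big|_{t=0} \;=\; (-1)^{h^1(f^*\E) - h^0(f^*\E)} \;=\; (-1)^{\chi(E, \cM(X))},
\end{equation*}
locally constantly on $\cM(X)$, which combined with the previous display yields the theorem.

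The main technical obstacle is installing the $\CC^*$-equivariant torus localization formula for cosection-localized virtual cycles in our setting: one must carefully track equivariance through a two-term locally-free resolution of the perfect obstruction theory $\EE_{\cM_0/\Mgnt}$, verify compatibility of the cosection-localized Gysin pullback of \cite[(3.5)]{KiLi13} with the decomposition of the intrinsic normal cone along $\cM(X)$, and match the resulting fixed-locus contribution with $[\cM(X)]^{\vir}$. Once this framework is set up, the equivariance of $\phi_0$ and $\sigma_0$ is automatic from the construction, and the sign computation reduces to elementary linear algebra combined with Serre duality.
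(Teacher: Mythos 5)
Your proposal follows essentially the same route as the paper: the same $\CC^*$-action with weights $\pm 1$ on the two summands of $\defZ_0$, identification of the fixed locus with $\cM(X)$, the cosection-localized torus localization formula of Chang--Kiem--Li, Serre duality pairing $R\pi_*(f^*\E^\vee\otimes\omegabul)$ with $(R\pi_*f^*\E)^\vee$, and the sign $(-1)^{\chi(E,\cM(X))}$ extracted from a two-term locally free resolution (whose existence the paper secures via the relatively ample line bundle assumption). The technical points you flag at the end — equivariance of the absolute obstruction theory and cosection, and the global resolution needed for the localization theorem — are exactly the lemmas the paper supplies.
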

\begin{remark}\label{rmk:virtual-rank}
  In Theorem \ref{thm:specialfiber}, the function $\chi(E, \cM(X))$ is
  constant on each connected component of $\cM(X)$.
  If $\{\cM_i\}_{i=1}^m$ are the connected components of $\cM(X)$,
  then we define 
  \[(-1)^{\chi(E, \cM(X))} [\cM(X)]^{\vir} = \bigoplus_{i=1}^m (-1)^{\chi(E, \cM_i)}[\cM_i]^{\vir}\]
  as a class in $A_*(\cM(X)) = \bigoplus A_*(\cM_i).$

  Explicitly, by Riemann--Roch for twisted curves
  \cite[Theorem~7.2.1]{AGV08}
  \begin{equation}
    \label{eq:virtual-rank}
    \chi(E, \cM_i)
    = \rk(\E) (1 - g) + \int_\beta c_1(\E) - \sum_{j = 1}^n \mathrm{age}_j(\E)
  \end{equation}
  where $\mathrm{age}_j(\E)$ is the age of $f^* \E$ at the $j$th
  marking, which is constant on $\cM_i$.
\end{remark}

The proof of Theorem \ref{thm:specialfiber} uses cosection localized
torus localization \cite[Thm~3.4]{CKL17}.
We recall two definitions from there.
First, if $X$ is a Deligne--Mumford stack with a $\CC^*$-action, the
\textit{fixed locus} $X^{\CC^*}$ is defined \'etale-locally as
follows.
Let $\Spec(A) \rightarrow X$ be an \'etale affine chart, equivariant
after reparametrization (such charts exist by \cite[Thm~4.3]{AHR19});
then $A$ is a $\ZZ$-graded ring and in this chart the fixed locus is
cut out by the ideal generated by elements of positive weight.
One can check that a closed point $x \in X(\CC)$ is in the fixed locus
if and only if $tx$ is isomorphic to $x$ for every $t \in \CC^*$ (see
for example \cite[Prop~5.23]{AHR19}).

Second, if $E$ is a perfect complex of sheaves on $X$, we say that $E = A \oplus B$ is a decomposition into fixed and moving parts if given an equivariant \'etale chart $U \rightarrow X$ and a quasi-isomorphism $f\colon E|_U\rightarrow F$ with $F$ a bounded complex of vector bundles, $f$ induces quasi-isomorphisms $A \rightarrow F^{\fix}$ and $B \rightarrow F^{\mv}$ to the fixed and moving parts of $F$, respectively.

In preparation to apply torus localization, we prove the following lemmas. The proof of the first was explained to us by Bumsig Kim and Jeongseok Oh (see also \cite[Footnote~1,~p.~35]{CFGKS}).
\begin{lemma}\label{lem:global-res}
Let $\cM$ be a Deligne-Mumford stack of finite type and let $\pi:\C \rightarrow \cM$ be a family of prestable twisted curves as in \cite{AV02}. If there exists a $\pi$-relatively ample line bundle on $\C$, then for any vector bundle $E$ on $\C$, the complex $R\pi_*E$ is globally isomorphic to a 2-term complex of vector bundles. 
\end{lemma}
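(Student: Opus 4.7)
The plan is to use the relatively ample line bundle $\mathcal{L}$ on $\C$ to express $R\pi_*E$ as the cone of a morphism between two vector bundles placed in cohomological degree $1$. By relative Serre vanishing for the $\pi$-ample line bundle $\mathcal{L}$, I would first choose $n \gg 0$ large enough that $R^1\pi_*(E \otimes \mathcal{L}^n)=0$ and the evaluation morphism $\pi^*\pi_*(E \otimes \mathcal{L}^n) \to E \otimes \mathcal{L}^n$ is surjective. Since $\cM$ is Deligne--Mumford, cohomology and base change is available \'etale-locally, so $F_0 := \pi_*(E \otimes \mathcal{L}^n)$ is locally free on $\cM$. Twisting back, one obtains a short exact sequence on $\C$
\begin{equation*}
0 \to K \to \pi^*F_0 \otimes \mathcal{L}^{-n} \to E \to 0,
\end{equation*}
in which $K$ is a vector bundle, for the usual reason that it is $\cM$-flat and its restriction to every fiber is the kernel of a surjection of vector bundles on a curve.

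Next, I would enlarge $n$ further so that also $\pi_*\mathcal{L}^{-n}=0$ (automatic once $\mathcal{L}^{-n}$ has sufficiently negative degree on each component of each fiber) and $R^1\pi_*\mathcal{L}^{-n}$ is locally free (by Serre duality for twisted curves it is dual to $\pi_*(\omega \otimes \mathcal{L}^n)$, which is a vector bundle for $n$ large). The projection formula then gives $R\pi_*(\pi^*F_0 \otimes \mathcal{L}^{-n}) \simeq F_0 \otimes R^1\pi_*\mathcal{L}^{-n}[-1]$, a vector bundle placed in degree $1$. Moreover $\pi_*K = 0$ since $K$ injects into $\pi^*F_0 \otimes \mathcal{L}^{-n}$, whose pushforward vanishes.

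The key technical step, which I expect to be the main obstacle, is to verify that $R^1\pi_*K$ is also locally free. For this, I would apply cohomology and base change, reducing the problem to showing that $h^1(K|_{\C_t})$ is locally constant in $t$. From the long exact sequence on each fiber $\C_t$ associated to the short exact sequence above, together with the vanishing $h^0(\pi^*F_0 \otimes \mathcal{L}^{-n}|_{\C_t})=0$, one obtains
\begin{equation*}
\dim h^1(K|_{\C_t}) = \chi(E|_{\C_t}) + \rk(F_0)\cdot\dim h^1(\mathcal{L}^{-n}|_{\C_t}),
\end{equation*}
and each summand on the right is locally constant on $\cM$ because Euler characteristics are constant in flat proper families. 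Granting this, the distinguished triangle
\begin{equation*}
R\pi_*K \to R\pi_*(\pi^*F_0 \otimes \mathcal{L}^{-n}) \to R\pi_*E \xrightarrow{+1}
\end{equation*}
has its first two terms concentrated in degree $1$ and locally free there, so taking the cone yields a quasi-isomorphism from $R\pi_*E$ to the $2$-term complex $[R^1\pi_*K \to F_0 \otimes R^1\pi_*\mathcal{L}^{-n}]$ of vector bundles in degrees $[0,1]$, as desired.
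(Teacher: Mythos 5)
The step that fails is the very first one: for a \emph{twisted} curve the evaluation map $\pi^*\pi_*(E\otimes\mathcal{L}^n)\to E\otimes\mathcal{L}^n$ is in general \emph{not} surjective for any $n$. Sections of $E\otimes\mathcal{L}^n$ over a fiber are invariant under the stabilizers, so at a stacky point $x$ with stabilizer $\mu_r$ the image of evaluation lands in the $\mu_r$-invariant subspace of the fibre $(E\otimes\mathcal{L}^n)_x$. If $E_x$ carries a nontrivial character of $\mu_r$ --- which happens in the intended application, where $E=f^*\E$ for a twisted stable map and the age terms in \eqref{eq:virtual-rank} are nonzero --- then twisting by powers of a single line bundle can never kill all the characters simultaneously: already for $E$ of rank one with character $\zeta\mapsto\zeta$ at a $\mu_2$-point and $\mathcal{L}$ (a power of which is) pulled back from the coarse curve, $E\otimes\mathcal{L}^n$ is never globally generated, and for $E$ of higher rank with several characters in one fibre no line-bundle twist can work. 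So the surjection $\pi^*F_0\otimes\mathcal{L}^{-n}\twoheadrightarrow E$ you start from does not exist, and the resolution never gets off the ground. (By contrast, the step you flag as the main obstacle --- local freeness of $R^1\pi_*K$ --- is fine; the Euler-characteristic/Grauert argument you sketch for it is correct.)

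The paper's proof circumvents exactly this point by factoring $\pi$ through the relative coarse moduli space $C$ and only ever invoking global generation of $\OO(n)$ \emph{on $C$}, where it is available. Pulling the surjection $q^*q_*\OO(n)\to\OO(n)$ back to the stacky curve, dualizing, and tensoring with $p^*\OO(n)\otimes E$ yields an \emph{injection} of $E$ into $(\pi^*q_*\OO(n))^\vee\otimes p^*\OO(n)\otimes E$, whose $R^1\pi_*$ vanishes by the projection formula and exactness of $p_*$; at no point does one need to generate $E$ itself by global sections. Your argument is the dual pattern (surject onto $E$ from a bundle with vanishing $\pi_*$, rather than inject $E$ into one with vanishing $R^1\pi_*$), and it would be a perfectly correct alternative for families of ordinary prestable curves; to make it work for twisted curves you would have to replace powers of $\mathcal{L}$ by a generating sheaf, or pass to the coarse space as the paper does.
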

\begin{proof}
Factor $\pi$ as the composition $\C \xrightarrow{p} C \xrightarrow{q} \cM$ where $C$ is the relative coarse moduli space \cite[Thm~3.1]{AOV11}. By assumption, there exists a $q$-relatively ample line bundle $\OO(1)$ on $C$, meaning that for any affine scheme $U \rightarrow \cM$ the pullback of $\OO(1)$ to $C \times_{\cM} U$ is ample. Let $\OO(n)$ denote $\OO(1)^{\otimes n}$.

Choose $n \gg 0$ such that
\begin{enumerate}
\item $R^1q_*\OO(n) = 0$,
\item $R^1q_*(p_*(E) \otimes \OO(n)) = 0$, and
\item $q^*q_*\OO(n) \rightarrow \OO(n)$ is surjective.
\end{enumerate}
Indeed, this is possible when $\cM$ is an affine scheme by \cite[Thm~III.8.8]{hartshorne}. We cover a general $\cM$ with finitely many affine schemes and take the maximum value of the respective $n$'s.

Now points (3) and (1) yield a surjection of vector bundles $\pi^*q_*\OO(n) \rightarrow p^*\OO(n)$. Let $K$ denote the kernel of this map, also a vector bundle. Taking the dual of the resulting short exact sequence and tensoring with $p^*\OO(n) \otimes E$ yields an exact sequence of vector bundles
\begin{equation}\label{eq:recent}
0 \rightarrow E \rightarrow (\pi^*q_*\OO(n))^\vee \otimes p^*\OO(n) \otimes E \rightarrow K^\vee \otimes p^*\OO(n) \otimes E \rightarrow 0.
\end{equation}

Using the projection formula twice, we compute
\begin{align*}
R^1\pi_*\Big( (\pi^*q_*\OO(n))^\vee \otimes p^*\OO(n) \otimes E \Big)&= H^1\Big( R\pi_*\big( (\pi^*q_*\OO(n))^\vee \otimes p^*\OO(n) \otimes E \big) \Big)\\
&= H^1\Big(  (q_*\OO(n))^\vee \otimes^L Rq_*(\OO(n) \otimes^L Rp_*E)  \Big)
\end{align*}
Since $p_*$ is exact (see e.g. \cite[Prop~11.3.4]{olsson}) we may replace $Rp_*E$ with $p_*E$; in particular this is a coherent sheaf. Moreover since $\OO(n)$ and $q_*\OO(n)$ are vector bundles (by (1)) we may replace the derived tensor products $\otimes^L$ with the usual one. Since tensoring with $(q_*\OO(n))^\vee$ commutes with taking cohomology, we see that (2) above implies that the sheaf $R^1\pi_*\Big( (\pi^*q_*\OO(n))^\vee \otimes p^*\OO(n) \otimes E \Big)=0$. Now the long exact sequence for $R\pi_*$ applied to \eqref{eq:recent} produces the desired resolution of $R\pi_* E$.
\end{proof}

\begin{lemma}\label{lem:decomposition}
The moduli space $\cM_0$ has a $\CC^*$-action such that
\begin{enumerate}
\item As closed sets, the fixed locus $\F$ of the action is naturally identified with $\cM(X)$
\item There is a decomposition into fixed and moving parts 
\[\EE_{\cM_0/\Mgnt}|_\F = \EE^{\fix}  \oplus \EE^{\mv}\]
where $\EE^{\fix} = \EE_{\cM(X)/\Mgnt}$ and $\EE^{\mv} = R \pi_*(f^*\E^\vee \otimes \omegabul \oplus f^*\E[1]),$ with $\pi\colon \C_{\cM(X)} \rightarrow \cM(X)$ the universal curve and $f\colon \C_{\cM(X)} \rightarrow X$ the universal map. 
\end{enumerate}
\end{lemma}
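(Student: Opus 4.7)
The strategy is to define a $\CC^*$-action on $\defZ_0$ by scaling the two summands with opposite weights, transfer it to $\cM_0$ via the functor of points, and then read off both the fixed locus and the decomposition of $\EE_{\cM_0/\Mgnt}|_\F$ from the resulting weight grading. Concretely, let $\CC^*$ act fiberwise on $\defZ_0 = Vb_{\Ct\times X}(\E_X \oplus \omega\otimes\E_X^\vee)$ with weight $+1$ on the first summand and weight $-1$ on the second. Since this action is trivial on the base $\Ct\times X$, the functor-of-points description of $\Sec{\defZ_0}{\Ct}$ promotes it to a $\CC^*$-action on $\Sec{\defZ_0}{\Ct}$ and hence on $\cM_0$, along which the universal section $\n$ is equivariant. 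A closed point $(C,f,n,p)$ of $\cM_0$ is fixed iff both fiber coordinates $n$ and $p$ are $\CC^*$-invariant; since their weights are both nonzero, this forces $n = p = 0$. Running this analysis on equivariant \'etale charts in the sense of \cite{AHR19} identifies $\F = \cM(X)$ as a closed subset of $\cM_0$, establishing (1).

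For (2), the factorization $\defZ_0 \xrightarrow{q} \Ct\times X \to \Ct$ produces on $\C_{\cM_0}$ a distinguished triangle
\[
L(q\circ\n)^*\LL_{\Ct\times X/\Ct} \longrightarrow L\n^*\LL_{\defZ_0/\Ct} \longrightarrow L\n^*\LL_{\defZ_0/\Ct\times X} \longrightarrow.
\]
On $\C_\F = \C_{\cM(X)}$ the section $\n|_\F$ is the zero section of the vector bundle $\defZ_0$, and the composition of $q\circ\n|_\F$ with the projection to $X$ recovers the universal map $f\colon \C_{\cM(X)} \to X$, so the outer terms specialize respectively to $Lf^*\LL_X$ (pure weight $0$) and $f^*\E^\vee \oplus \omega^\vee\otimes f^*\E$ (pure weights $-1$ and $+1$). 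Since the weight spectra of the outer terms are disjoint, any $\CC^*$-equivariant connecting morphism between them must vanish and the triangle splits $\CC^*$-equivariantly. Tensoring with $\omegabul = \omega[1]$ and applying $R\pi_*$, the weight-zero summand identifies with $\EE_{\cM(X)/\Mgnt}$ via \eqref{eq:initialpotX} and the functoriality of \eqref{eq:candidatepot}, while the non-zero-weight summands assemble into
\[
R\pi_*(f^*\E^\vee \otimes \omegabul) \oplus R\pi_*(f^*\E[1]) = R\pi_*(f^*\E^\vee \otimes \omegabul \oplus f^*\E[1]),
\]
matching the claimed $\EE^{\mv}$.

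The main technical subtlety is that ``decomposition into fixed and moving parts'' is defined \'etale-locally via a two-term resolution by vector bundles, so one needs a $\CC^*$-equivariant global resolution of $\EE_{\cM_0/\Mgnt}|_\F$. This is obtained by pulling back the $\pi$-relatively ample line bundle provided by hypothesis (4) on $\cM(X)$ to $\C_\F \subset \C_{\cM_0}$ and rerunning the argument of Lemma \ref{lem:global-res} $\CC^*$-equivariantly; since the line bundles $\OO(n)$ descend from the coarse moduli and carry the trivial $\CC^*$-action, the resulting two-term resolution is naturally $\CC^*$-equivariant, and the decomposition by weight character then matches the étale-local definition tautologically.
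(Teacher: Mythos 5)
Your construction of the action and your treatment of part (2) follow the paper closely. For the splitting of the triangle
\[
Lf^*\LL_{\Ct\times X/\Ct} \to L\n^*\LL_{\defZ_0/\Ct} \to L\n^*\LL_{\defZ_0/\Ct\times X} \to
\]
the paper uses the explicit retraction coming from the zero section $\iota\colon \Ct\times X \to \defZ_0$ (since $q\circ\iota=\mathrm{id}$, the induced map $L\iota^*\LL_{\defZ_0/\Ct}\to\LL_{\Ct\times X/\Ct}$ splits the first arrow), whereas you argue that the connecting map must vanish because the weight spectra are disjoint. Both are valid; the retraction argument is marginally more economical because it does not invoke the weight decomposition of the equivariant derived category on $\Ct\times X$. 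Your closing paragraph about a global equivariant resolution is more than the lemma requires (the paper's definition of a fixed/moving decomposition is checked on \'etale charts, so pure-weight summands suffice there; the global resolution of Lemma \ref{lem:global-res} is only invoked later, in the proof of Theorem \ref{thm:specialfiber}, to apply the localization formula), but it is not wrong.

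The genuine gap is in part (1), in the sentence ``a closed point $(C,f,n,p)$ of $\cM_0$ is fixed iff both fiber coordinates $n$ and $p$ are $\CC^*$-invariant.'' On a Deligne--Mumford stack, a closed point is fixed iff it is \emph{isomorphic} to its translates, i.e.\ for each $t$ there is an automorphism $\varphi_t$ of the underlying point $(C,f)$ of $\cM(X)$ with $\varphi_t^*n = tn$ and $\varphi_t^*p = t^{-1}p$; nothing forces $\varphi_t$ to be the identity, so you cannot conclude $tn=n$ and hence $n=0$ directly from the weights. This is exactly the point the paper spends most of its proof of (1) on: it only extracts from fixedness that the \emph{image} $n(C)\subset\defZ_0$ is $\CC^*$-invariant as a set, and then rules out any point of the image outside the zero section by passing to the projective completion $\overline{\PP}(\defZ_0)$ and using properness of $C$ (the closure of a nonzero orbit would force the image to meet the boundary, contradicting that $n$ factors through $\defZ_0$). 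Your step can alternatively be repaired without the properness argument: since $\Aut(C,f)$ is finite, two distinct values $t_1\neq t_2$ must yield the same $\varphi_{t_1}=\varphi_{t_2}$, whence $t_1 n = t_2 n$ and so $n=0$ (likewise $p=0$). Either way, some argument is needed where you currently assert the equivalence.
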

\begin{proof}
The torus $\CC^*$ acts on $\defZ_0$ by scaling its fibers over $\Ct \times X$: let it act with weight $1$ on $E$ and weight $-1$ on $E^\vee \otimes \omega$. The fixed locus for this action is $X$.

The $\CC^*$-action on $\defZ_0$ induces one on $\cM_0$; let $\F$ be the fixed locus. Suppose $(n\colon C \rightarrow \defZ_0)$ is a closed point of $\F$ for some twisted curve $C$. Then $n$ factors through $\Ct \times X \subset \defZ_0$ for purely topological reasons as follows. Let $\overline{\PP}(\defZ_0)$ be the projective closure of $\defZ_0$ as a vector bundle over $\Ct \times X$. Because $(C, n)$ is fixed, if $t \in \CC^*$ is any closed point, then $(C, n)$ is isomorphic to $(C, t\circ n)$ as points of $\cM_0$, where $t$ also denotes the function on $\defZ_0$ defined by $t \in \CC^*$. In particular, the image $n(C)$ is invariant under the $\CC^*$-action. Invariance implies that if $n(C)$ contains some point of $\defZ_0$ not in $\Ct \times X$, then it contains the entire fiber $F$ of $\defZ_0$ containing this point. Then the composition 
\[\tilde n\colon C \xrightarrow{n} \defZ_0 \rightarrow \overline{\PP}(\defZ_0)\] 
is proper and hence closed, so it is surjective onto its image which must contain the projective closure of $F$. This is a contradiction since $\tilde n$ factors through $\defZ_0$. Conversely, if $n$ factors through $\Ct \times X$ then $(n\colon C \rightarrow \defZ_0)$ is $\CC^*$-fixed. This proves (1).

Now let $\iota\colon \Ct \times X \rightarrow \defZ_0$ be the inclusion of the zero section. The $\CC^*$-equivariant sequence of maps $\defZ_0 \xrightarrow{} \Ct \times X \rightarrow \Ct$ produces a distinguished triangle of $\CC^*$-equivariant tangent complexes which we can restrict to $\Ct \times X$, obtaining
\begin{equation}\label{eq:splitting}
\LL_{\Ct \times X/\Ct} \rightarrow L\iota^*\LL_{\defZ_0/\Ct} \rightarrow L\iota^*\LL_{\defZ_0/\Ct \times X} \rightarrow .
\end{equation}
In fact, this triangle splits by the canonical map $L\iota^*\LL_{\defZ_0/\Ct} \rightarrow \LL_{\Ct \times X/ \Ct}$ induced by $\iota$. Let $\pi_\F\colon \Ct_\F \rightarrow \F$ and $\n_\F\colon \C_\F \rightarrow \defZ_0$ be the restrictions of $\pi$ and $\n$ to $\C_\F$, the restriction of the universal curve to $\F$. We have seen that $\n_\F = \iota \circ f$ for a morphism $f\colon \C_\F \rightarrow\Ct \times X$. Because $\pi$ is flat we have
\[
\EE_{\cM_0/\Mgnt}|_\F = \left(R \pi_*(L\n^*\LL_{\defZ_0/\Ct}\otimes \omegabul) \right)|_\F = R (\pi_\F)_*(Lf^*L\iota^*\LL_{\defZ_0/\Ct}\otimes \omegabul).
\]
Hence, applying $R (\pi_\F)_*(Lf^*(\bullet) \otimes \omegabul)$ to the splitting sequence \eqref{eq:splitting} we obtain
\[
\EE_{\cM_0/\Mgnt}|_\F = R (\pi_\F)_*(f^*\E^\vee \otimes \omegabul \oplus f^*\E[1])\oplus \EE_{\cM(X)/\Mgnt},
\]
where $\EE_{\cM(X)/\Mgnt}$ is defined in \eqref{eq:initialpotX} and $\LL_{\defZ_0/\Ct \times X}$ was computed using \eqref{eq:specialm0}.
Since local sections of $\E$ are all scaled with weight $1$ by the $\CC^*$-action, local sections of $f^*\E$ and $\omega \otimes f^*\E^\vee$ are as well with weight $\pm 1$, and in particular $R (\pi_\F)_*(f^*\E[1])$ and $R (\pi_\F)_*(\omegabul \otimes f^*\E^\vee)$ have pure weights $\pm 1$. Likewise, since $\Ct \times X$ and hence $\LL_{\Ct \times X/\Ct}$ is $\CC^*$-fixed, the cohomology sheaves of $\EE_{\cM(X)/\Mgnt}$ are $\CC^*$-fixed. 
This proves (2).
\end{proof}

\begin{proof}[Proof of Theorem \ref{thm:specialfiber}]
To apply \cite[Thm~3.4]{CKL17} we must use an absolute perfect obstruction theory $\phi_{abs}\colon \EE_{\cM_0} \rightarrow \LL_{\cM_0}.$ It may be defined by the morphism of distinguished triangles
\begin{equation}\label{eq:specialfiber1}
\begin{tikzcd}
\EE_{\cM_0} \arrow[r] \arrow[d, "\phi_{abs}"] & \EE_{\cM_0/\Mgnt} \arrow[r] \arrow[d, "\phi"] & q^*\LL_{\Mgnt}[1] \arrow[r] \arrow[d, equal] & {}\\
\LL_{\cM_0} \arrow[r] & \LL_{\cM_0/\Mgnt} \arrow[r] & q^*\LL_{\Mgnt}[1] \arrow[r] &{}
\end{tikzcd}
\end{equation}
where $q\colon \cM_0 \rightarrow \Mgnt$ is the projection. By Proposition \ref{prop:cos-loc-func}, $\phi_{abs}$ is a perfect obstruction theory and it carries a cosection $\sigma_{abs}$, and the induced cosection localized virtual fundamental class is equal to $[\cM_0]^{\vir}_\sigma$. This morphism of distinguished triangles is equivariant (i.e., pulled back from a morphism of DTs on $[\cM_0/\CC^*]$) by Lemma~\ref{lem:eqabsolutepot}. Moreover, note that the original cosection $\sigma_{\cM_0/\Mgnt}$ was equivariant for the $\CC^*$-action scaling $\E$, since it was induced by an equivariant section; so $\sigma_{abs}$ is equivariant as well.

The splitting of $\EE_{\cM_0/\Mgnt}$ in Lemma \ref{lem:decomposition} induces a splitting of $\EE_{\cM_0}$ as follows. There is a commuting diagram where all rows and columns are distinguished:
\[
\begin{tikzcd}
\EE_{\cM(X)} \arrow[r] \arrow[d] & \EE_{\cM(X)/\Mgnt} \arrow[r] \arrow[d] & q_\F^*\LL_{\Mgnt} \arrow[r] \arrow[d, equal] & {}\\
\EE_{\cM_0}|_\F \arrow[r] \arrow[d] & \EE_{\cM_0/\Mgnt}|_\F \arrow[r] \arrow[d] & q_\F^*\LL_{\Mgnt} \arrow[r] \arrow[d] &{}\\
\EE^{\mv} \arrow[r, "\sim"] \arrow[d] & \EE^{\mv} \arrow[r] \arrow[d] & 0 \arrow[r] \arrow[d] & {}\\
{} & {} & {} &
\end{tikzcd}
\]
Here $\EE_{\cM(X)}$ is the absolute obstruction theory for $\cM(X)$, defined as in \eqref{eq:specialfiber1}, and $q_\F$ is the restriction of $q$ to $\F $. To obtain this diagram, begin with the middle horizontal and vertical triangle, and observe first that the top right square commutes. Note that the splitting $\EE_{\cM_0/\Mgnt}|_\F \rightarrow \EE_{\cM(X)/\Mgnt}$ and equality on $q_\F^*\LL_{\Mgnt}$ induce a splitting of the leftmost column. We conclude that
\[
  \EE_{\cM_0}|_\F = \EE_{\cM(X)} \oplus \EE^{\mv}
\]
is a decomposition into fixed and moving parts, respectively. 
By Lemma \ref{lem:global-res} and our assumptions in Section \ref{sec:introdefs},
we may find a resolution
$R \pi_*(f^*\E) = [\E_0\rightarrow \E_1]$ where $\E_0$ and $\E_1$ are
locally free of ranks $r_0$ and $r_1$, respectively, and of
$\CC^*$-weight $1$. In particular, $\EE^{\mv}$ has a global resolution and we may apply \cite[Thm~3.5]{CKL17}, obtaining
\[
[\cM_0]^{\vir}_{\sigma_0} = \frac{[\cM(X)]^{\vir}}{e((R \pi_*(f^*\E^\vee \otimes \omegabul \oplus f^*\E[1]))^\vee)} \in A^{\CC^*}(\cM_0(\sigma))\otimes_{\QQ[t]}\QQ[t,t^{-1}],
\]
where the euler class is the $\CC^*$-equivariant one, we have identified the virtual class of the fixed locus using Lemma \ref{lem:decomposition}, and we have remembered that the degeneracy locus in $\cM_0$ is contained in $\cM(X) \subset \cM_0$.
By Serre duality,
\[
R \pi_*(f^*\E^\vee\otimes \omegabul) \cong (R \pi_*(f^*\E))^\vee.
\]
Therefore,
\begin{equation*}
  e((R \pi_*(f^*\E^\vee \otimes \omegabul \oplus f^*\E[1]))^\vee)
  = \frac{e(R \pi_*(f^*\E))}{e((R \pi_*(f^*\E))^\vee)}.
\end{equation*}
Using the resolution $R\pi_*(f^*\E) = [\E_0 \rightarrow \E_1]$, we can write
\[
  e((R \pi_*(f^*\E^\vee \otimes \omegabul \oplus f^*\E[1]))^\vee)
  = \frac{e(\E_0)}{e(\E_1)}\frac{e(\E_1^\vee)}{e(\E_0^\vee)}
  = \frac{e(\E_0)}{e(\E_1)}\frac{e(\E_1)}{e(\E_0)} (-1)^{r_1-r_0}
  = (-1)^{r_0-r_1}.
\]
Noting that by definition $r_0 - r_1$ is the value of
$\chi(E, \cM(X))$ on the component of $\cM(X)$ under consideration,
this finishes the proof of the theorem.
\end{proof}

\begin{proof}[Proof of Theorem \ref{thm:mainthm}]
The Gysin maps $\iota_c^!\colon A_*(\cM(X) \times \AA^1) \rightarrow A_*(\cM(X))$ are independent of $c$, so fixing some $c \neq 0$ we have
\begin{equation}\label{eq:almostdone}
\iota_c^![\cM]^{\vir}_{\sigma} = \iota_0^![\cM]^{\vir}_{\sigma} \quad \quad \in A_*(\cM(X)).
\end{equation}
By Lemma \ref{lem:comparevc} the left hand side is $[\cM_c]^\vir_{\sigma_c} = [\cM(Y, E)]^{\vir}_{\sigma_c}$; combined with \eqref{eq:proof-of-1} this proves \eqref{eq:thma}. By Lemma \ref{lem:comparevc} and Theorem \ref{thm:specialfiber} the right hand side of \eqref{eq:almostdone} is $(-1)^{\chi(E, \cM(X))}[\cM(X)]^{\vir}$, proving \eqref{eq:thmb}.
\end{proof}

\section{Applications}\label{sec:applications}
We explain how Theorem \ref{thm:mainthm} applies in various situations, and we relate it to existing constructions of the moduli of p-fields.
\subsection{Application to stable maps}\label{sec:stablemaps}

Let $Y$ be a smooth projective Deligne--Mumford stack.
Choose a vector bundle $E$ on $Y$ and a regular section whose zero
locus $X$ is smooth.
Fix nonnegative integers $g,n$ and a class
$\beta \in H_2(\underline{Y}),$ where $\underline{Y}$ denotes the
coarse moduli space of $Y$.

\begin{proof}[Proof of Corollary \ref{cor:stablemaps}]
  Let $\iota\colon \underline{X} \rightarrow \underline{Y}$ denote the
  inclusion, and let $\cM(Y)$ be the moduli space of stable twisted maps
  $\Mbar_{g,n}(Y, \beta)$.
  A priori, $\Mbar_{g,n}(Y, \beta)$ is an open substack of
  $\Hom_{\Mgnt}(\Ct, Y \times \Mgnt)$; by Lemma \ref{lem:fibered_sections}
  canonical map
  $\Sec{\Ct \times Y}{\Ct} \rightarrow \Hom_{\Mgnt}(\Ct, Y \times
  \Mgnt)$ is an equivalence.
  Moreover the morphism \eqref{eq:candidatepot} is the usual
  obstruction theory on $\Hom_{\Mgnt}(\Ct, Y \times \Mgnt)$, so by
 \cite[Section 4.5]{AGV08} its restriction to $\Mbar_{g,n}(Y, \beta)$ is
  perfect.
  The moduli space $\cM(X)$ is a disjoint union over
  $\beta' \in H_2(\underline{X})$ with
  $\iota_* \beta' = \beta$:
  \begin{equation}\label{eq:stablemaps2}
    \cM(X) = \Mbar_{g,n}(X, \beta) :=\bigsqcup_{\beta' \mapsto \beta}\Mbar_{g,n}(X, \beta').
  \end{equation}
  Since degree is constant in (connected) families, each stack $\Mbar_{g,n}(X, \beta')$ is open in $\cM(X)$. In particular this is a decomposition into connected components.
Since $X$ is smooth, the morphism \eqref{eq:candidatepot} defines a relative perfect obstruction theory on $\cM(X)$.
By Theorem \ref{thm:mainthm} we see there is a moduli space of $p$-fields $\cM(Y, E)$ containing $\Mbar_{g,n}(X, \beta)$ as a closed substack, and a class $[\cM(Y, E)]^{\vir}_{\loc}$ satisfying
\begin{equation}\label{eq:stablemaps1}
  [\cM(Y, E)]^{\vir}_{\loc} = (-1)^{\chi(E,\cM(X))}[\Mbar_{g,n}(X, \beta)]^{\vir}\quad \text{in}\;A_*(\Mbar_{g,n}(X, \beta)).
\end{equation}
Using the decomposition \eqref{eq:stablemaps2} we rewrite the right hand side of \eqref{eq:stablemaps1} as
\begin{equation*}
  (-1)^{\chi(E,\cM(X))}[\Mbar_{g,n}(X, \beta)]^{\vir}
  = \sum_{\beta' \mapsto \beta} (-1)^{\chi(E,\Mbar_{g,n}(X, \beta'))}[\Mbar_{g,n}(X, \beta')]^{\vir},
\end{equation*}
which completes the proof.
\end{proof}

\subsection{Application to quasimaps}\label{sec:quasimaps}

Let $Y=[W/G]$ where $W$ is an affine l.c.i. variety and $G$ is a
reductive group acting on $W$.
Choose a character $\theta$ of $G$ such that
$W^s_{\theta} = W^{ss}_{\theta}$ is smooth and nonempty and has finite
$G$-stabilizers.
Let $E$ be a $G$-equivariant vector bundle on $W$ with a
$G$-equivariant regular section $s$ whose zero locus $U$ has smooth
intersection with $W^s_{\theta}$.
Fix nonnegative integers $g, n$ and a positive rational number
$\epsilon$, and choose a class $\beta \in \Hom(\Pic^G(W), \QQ)$. 

\begin{proof}[Proof of Corollary \ref{cor:quasimaps}]
Observe first that the $G$-equivariant sheaf $\E$ descends to a sheaf $\overline \E$ on $[W/G]$ and that $s$ descends to a regular section $\overline s$ with zero locus $X = [U/G]$. Moreover, $U$ is an affine l.c.i. variety with $G$-action. It is straightforward to check that
\[
(W^s_\theta \cap U) \subset U^s_{\theta} \subset U^{ss}_{\theta} \subset (W^{ss}_\theta \cap U),
\]
so $W^{ss}_{\theta} = W^s_{\theta}$ implies that $U^{ss}_{\theta} = U^s_{\theta} = U \cap W^{ss}_{\theta}$, and by assumption this locus is smooth. Hence we may consider moduli of $\epsilon$-stable quasimaps to $U\sslash_{\theta} G$.

Let $\iota\colon U \rightarrow W$ denote the inclusion, and let $\cM(Y)$ be the moduli space of $\epsilon$-stable quasimaps $\Mbar_{g,n}^\epsilon(W\sslash_\theta G, \beta)$. We note that the definition in \cite[Def~2.1]{CCK15} guarantees the existence of an ample line bundle as required in Section \ref{sec:introdefs} assumption (4). By Lemma \ref{lem:quasimapcompare} this is (isomorphic to) an open substack of $\Sec{\Ct \times [W/G]}{\Ct}$ and \eqref{eq:candidatepot} is a relative perfect obstruction theory defining the same virtual cycle as the one defined in \cite[Sec~2.4.5]{CCK15}. As in the proof of Corollary \ref{cor:stablemaps}, the moduli space $\cM(X)$ is a disjoint union over $\beta' \in \Hom(\Pic^G(U), \QQ)$ with $\iota_*\beta' = \beta$. An argument analogous to 
the one used in Section \ref{sec:stablemaps} completes the proof of the corollary.
\end{proof}

\subsection{Relation to the original construction of Chang--Li}
We compare our construction to that in \cite{CL12}, that is, to their moduli space $\overline{\cM}_g(\PP^4, d)^p$ and its relative perfect obstruction theory (defined in \cite[3.1]{CL12} and \cite[Prop~3.1]{CL12}, respectively). The easiest comparison is to choose $Y=[\CC^5/\CC^*]$ and $E$ the line bundle determined by the fifth power of the regular representation, and set $\cM(Y) = \overline{\cM}_{g}(\PP^4, d)$. Observe that $\cM(Y)$ is an open substack of $\Sec{\Ct \times \PP^4}{\Ct}$, which is in turn an open substack of $\Sec{\Ct \times [\CC^5/\CC^*]}{\Ct}$ via the embedding $\PP^4 \subset [\CC^5/\CC^*]$. As before, set $Z = Vb_{\Ct\times Y}(\omega \otimes \E^\vee).$

Let $\C_{\cM(Y)} \rightarrow \cM(Y)$ be the universal curve and $f\colon \C_{\cM(Y)} \rightarrow [\CC^5/\CC^*]$ the universal map. The moduli space $\overline{\cM}_g(\PP^4, d)^p$ is defined to be $\Sec{f^*E \otimes \omega}{\C_{\cM(Y)}}$. By Lemmas \ref{lem:basechange} and \ref{lem:basechange2} this is canonically isomorphic to our moduli space
\[
\cM(Y, E) = \cM(Y) \times_{\Sec{\Ct \times [\CC^5/\CC^*]}{\Ct}} \Sec{Z}{\Ct}.
\]

To compare the perfect obstruction theories, we use the following diagram.
\begin{equation}
\begin{tikzcd}
&&Z\arrow[d] \\
& Vb(\mathcal{L}^{\oplus 5}\oplus \mathcal{P}) \arrow[d] \arrow[ur] & \Ct\times B\CC^* \arrow[d]&\\
\C_{\cM^p} \arrow[ur]  \arrow[r] \arrow[d]& \C_{\mathfrak{D}_g} \arrow[r] \arrow[d] \arrow[ur]& \Ct \arrow[d] &\\
\cM^p \arrow[r] &\mathfrak{D}_g \arrow[r] & \mathfrak{M}_g&\\
\end{tikzcd}
\end{equation}
Here, $\mathfrak{D}_g$ is the Picard stack of $\mathfrak{M}_g$, which is identified with $\Sec{\Ct \times B\CC^*}{\Ct}$, and $\mathscr{L}$ is its universal line bundle, with $\mathscr{P} = \mathscr{L}^{-\otimes 5}\otimes \omega$. The stack $\cM^p$ is $\cM(Y, E)=\overline{\cM}_g(\PP^4, d)^p.$ All quadrilaterals in this diagram are fibered. The obstruction theory of \cite[Prop~3.1]{CL12} is the canonical one \eqref{eq:candidatepot} on $\cM^p$, relative to $\mathfrak{D}_g$; our obstruction theory is the canonical one \eqref{eq:candidatepot} relative to $\mathfrak{M}_g$. These are related by a morphism of distinguished triangles, and in particular induce the same virtual cycle, by an argument analogous to that of Lemma \ref{lem:quasimapcompare}.

\subsection{Quantum Lefschetz}
\label{sec:lefschetz}
We will prove Theorem~\ref{thm:lefschetz} in this section.
Recall that we assume
\begin{equation*}
  H^1(C, f^* \E) = 0
\end{equation*}
for every closed point $[f\colon C \to Y] \in \cM(Y)$.
Hence the zero section $\cM(Y) \to \cM(Y, E)$ is an isomorphism and $R^0 \pi_* f^* \E$ is a locally free
sheaf.
  
Pushing forward \eqref{eq:thmb} under the inclusion
$\iota\colon \cM(X) \to \cM(Y, E)$, gives
\begin{equation}
  \label{eq:lefschetz1}
  \iota_* [\cM(X)]^\vir = (-1)^{\chi(E, \cM(Y))} [\cM(Y, E)]^\vir \qquad \text{in }A_*(\cM(Y)).
\end{equation}
Now $[\cM(Y, E)]^\vir$ and $[\cM(Y)]^\vir$ are two virtual cycles on
the same space but defined via different obstruction theories.

By Lemma~\ref{lem:functoriality1}, there exists a morphism of
distinguished triangles
\begin{equation*}
  \begin{tikzcd}
    \EE_{\cM(Y, E)/\Mgnt} \arrow[r] \arrow[d] & \EE_{\cM(Y)/\Mgnt} \arrow[r] \arrow[d] & R \pi_* (Lf^* \LL_{Z/\Ct \times Y}\otimes \omegabul) \arrow[r] \arrow[d] & {}\\
    \LL_{\cM(Y, E)/\Mgnt} \arrow[r] & \LL_{\cM(Y)/\Mgnt} \arrow[r] & \LL_{\cM(Y, E)/\cM(Y)} \arrow[r] &{},
  \end{tikzcd}
\end{equation*}
so that we have a compatible triple of obstruction theories in the
sense of \cite[Definition~4.5]{Ma12}.
Hence, by \cite[Theorem~4.8, Example~3.17]{Ma12}, we have
\begin{equation}
  \label{eq:lefschetz2}
  [\cM(Y, E)]^\vir = \pi^! [\cM(Y)]^\vir,
\end{equation}
where $\pi^!$ is virtual pullback via the projection
$\cM(Y, E) \to \cM(Y)$.
Note that
\begin{equation*}
  (R \pi_* Lf^* \LL_{Z/\Ct \times Y}\otimes \omegabul)^\vee
  = R \pi_*(\omega \otimes f^* \E^\vee)
  = R \pi_*(f^* \E)^\vee
  = (R^0 \pi_* f^* \E)^\vee[1].  
\end{equation*}
Going through the definition of virtual pullback
(\cite[Construction~3.6]{Ma12}), we see that
\begin{equation}
  \label{eq:lefschetz3}
  \pi^! [\cM(Y)]^\vir
  = (-1)^{\chi(E, \cM(Y))} e(R^0 \pi_* f^* \E) \cap [\cM(Y)]^\vir.
\end{equation}
Combining \eqref{eq:lefschetz1}, \eqref{eq:lefschetz2} and
\eqref{eq:lefschetz3}, we conclude the proof of
Theorem~\ref{thm:lefschetz}.

\appendix

\section{Summary of results about the moduli of sections}
\label{sec:moduli-sections}

In this appendix we collect some results about the moduli of sections defined in \eqref{eq:defsec} and its candidate obstruction theory defined in \eqref{eq:candidatepot}. Most, if not all, of these results are well-known, but we could not find references for the proofs. Since our argument relies heavily on these properties, we give a coherent treatment here.

Throughout this appendix, all algebraic stacks a quasi-separated and locally finite type over $\CC$. We fix such an algebraic stack $\mathfrak{U}$ and
$\pi\colon \C\rightarrow \mathfrak{U}$ is a flat finitely-presented
family of connected, twisted (nodal) curves in the sense of \cite{AV02}. By \cite[Prop~2.2.6]{webb-thesis}, such a family is equipped with a functorial pair $(\omegabul_{\fU}, tr_{\fU})$ where the complex $\omegabul_{\fU}$ is represented by $\omega_{\fU}[1]$, with $\omega_{\fU}$ an invertible sheaf in the lisse-\'etale site of $\C$; and $tr_{\fU}: R\pi_*\omegabul_{\fU} \rightarrow \OO_{\fU}$ is a morphism in the derived category.

\subsection{Properties of the moduli}
Suppose we have morphisms of algebraic stacks 
\begin{equation}\label{eq:tower}
Z \rightarrow W\rightarrow \C\xrightarrow{\pi} \mathfrak{U}
\end{equation}
where both $Z\rightarrow \mathfrak{U}$ and $W\rightarrow \mathfrak{U}$ have affine stabilizers. We prove some canonical isomorphisms of moduli of sections. The following observation will be useful.

\begin{lemma}\label{lem:fibered_sections}
Fix a diagram of algebraic stacks with morphisms as below, so that the square is fibered (and includes a 2-morphism $\alpha$):
\[
\begin{tikzcd}
& F \arrow[r, "z"] \arrow[d, "y"] & D \arrow[d] \\
A  \arrow[r, "x"] & B \arrow[r] & C
\end{tikzcd}
\]
Then the arrow $F \rightarrow D$ induces an equivalence of groupoids
\[
\Hom_B(A, F) \xrightarrow{\sim} \Hom_C(A, D).
\]
\end{lemma}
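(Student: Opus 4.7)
The plan is to reduce this to the $2$-categorical universal property of the fibered product $F \simeq B \times_C D$. Since we are working in the $2$-category of algebraic stacks, a morphism $A \to F$ is, up to canonical equivalence, the data of a triple $(u, v, \gamma)$ consisting of morphisms $u\colon A \to B$, $v \colon A \to D$, and a $2$-isomorphism $\gamma$ between the two induced compositions $A \to C$. Similarly, an object of $\Hom_B(A, F)$ is such a triple together with a $2$-isomorphism $\delta\colon u \Rightarrow x$; and an object of $\Hom_C(A, D)$ is a morphism $v\colon A \to D$ together with a $2$-isomorphism $\epsilon$ between $A \xrightarrow{x} B \to C$ and $A \xrightarrow{v} D \to C$.

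I would then construct the inverse explicitly. Given $(v, \epsilon) \in \Hom_C(A, D)$, the universal property of $F$ applied to the triple $(x, v, \epsilon)$ produces a morphism $\tilde v \colon A \to F$, together with canonical $2$-isomorphisms $y \circ \tilde v \Rightarrow x$ and $z \circ \tilde v \Rightarrow v$ whose compatibility with the structural $2$-morphism $\alpha$ of the fibered square recovers $\epsilon$. Packaging the first of these gives an object of $\Hom_B(A, F)$. Conversely, given $(f, \delta) \in \Hom_B(A, F)$, one obtains an object of $\Hom_C(A, D)$ by composing with $z$ and using $\alpha$ and $\delta$ to produce the required $2$-isomorphism $\epsilon$. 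Compatibility of morphisms in the groupoids follows because the $2$-isomorphisms involved are determined by the universal property.

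The two constructions are mutually inverse up to canonical $2$-isomorphism: one direction is immediate from the uniqueness (up to unique $2$-isomorphism) in the universal property of $F$, and the other from the fact that composing with $z$ undoes the construction via the universal property. I do not anticipate a genuine obstacle here; the only care needed is bookkeeping of the $2$-isomorphisms, which is handled uniformly by the universal property of the $2$-fibered product. This is the reason the result should occupy only a few lines once the conventions for $\Hom_B$ and $\Hom_C$ are spelled out.
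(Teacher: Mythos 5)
Your proof is correct and is exactly the standard argument the paper has in mind: the paper's own ``proof'' simply declares the statement straightforward and cites \cite[Lem~2.3.1]{webb-thesis}, which carries out the same reduction to the $2$-categorical universal property of the fibered product. No discrepancies to report.
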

\begin{proof}
Straightforward; see, for example, \cite[Lem~2.3.1]{webb-thesis}.
\end{proof}

In the context of \eqref{eq:tower}, on $\Sec{W}{\C}$ we have the universal curve (pullback of $\C$) and universal section, denoted $f\colon \C_{\Sec{W}{\C}} \rightarrow W$.

\begin{lemma}\label{lem:basechange}
  Let $f^*Z$ denote the fiber product $\C_{\Sec{W}{\C}}\times_W Z$. Then there is a canonical isomorphism
\[ \Sec{f^*Z}{\C_{\Sec{W}{\C}}} \cong \Sec{Z}{\C}\]
of stacks over $\mathfrak{U}$.
\end{lemma}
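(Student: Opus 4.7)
The plan is to check the isomorphism on the level of functors of points, invoking Lemma~\ref{lem:fibered_sections} to handle the identification of lifts through a fiber product. Fix a test morphism $T \to \fU$. Unwinding the definition \eqref{eq:defsec}, a $T$-point of $\Sec{f^*Z}{\C_{\Sec{W}{\C}}}$ over $\fU$ consists of two pieces of data: first, a $T$-point of $\Sec{W}{\C}$ over $\fU$, which is a section $s \colon \C \times_\fU T \to W$ over $\C$; and second, a section of the pullback $(f^*Z)_T \to (\C_{\Sec{W}{\C}})_T$, where the pullback is taken along the tautological map $T \to \Sec{W}{\C}$ classifying $s$.

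The key observation is that $(\C_{\Sec{W}{\C}})_T = \C \times_\fU T$ and $(f^*Z)_T = (\C \times_\fU T) \times_W Z$, with the map $\C \times_\fU T \to W$ given precisely by $s$. Hence $(f^*Z)_T$ fits into the fiber square
\[
\begin{tikzcd}
(f^*Z)_T \arrow[r] \arrow[d] & Z \arrow[d] \\
\C \times_\fU T \arrow[r, "s"] & W.
\end{tikzcd}
\]
Applying Lemma~\ref{lem:fibered_sections} to this square (with the role of $A$ played by $\C \times_\fU T$ and the role of $F$ played by $(f^*Z)_T$) identifies sections of $(f^*Z)_T \to \C \times_\fU T$ with morphisms $\C \times_\fU T \to Z$ over $W$ that recover $s$ upon composition with $Z \to W$. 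Equivalently, it suffices to give a morphism $\C \times_\fU T \to Z$ over $\C$, since the factorization through $s$ is then automatic.

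Therefore a $T$-point of $\Sec{f^*Z}{\C_{\Sec{W}{\C}}}$ over $\fU$ is the same data as a morphism $\C \times_\fU T \to Z$ over $\C$, which is by definition a $T$-point of $\Sec{Z}{\C}$ over $\fU$. The assignment is visibly functorial in $T$ and respects the structure maps to $\fU$, producing a canonical isomorphism of stacks. The only mild subtlety, which is precisely what Lemma~\ref{lem:fibered_sections} handles, is to verify that this $2$-categorical identification respects the $2$-morphisms in the definition of sections as Hom-groupoids; once that lemma is in hand there is no further obstacle.
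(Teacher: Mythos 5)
Your proof is correct and follows essentially the same route as the paper's: both reduce to comparing $T$-points over $\fU$ and invoke Lemma~\ref{lem:fibered_sections} to identify sections of the pullback $(f^*Z)_T \to \C\times_\fU T$ with morphisms $\C\times_\fU T \to Z$ over $W$ lifting $s$. The one place you are looser than the paper is the final step ("the factorization through $s$ is then automatic"): the paper verifies this $2$-categorical bookkeeping explicitly by comparing with the fiber product $\Sec{W}{\C}\times_{\Sec{W}{\C}}\Sec{Z}{\C}$ and checking objects and morphisms agree on the nose, and that verification is not supplied by Lemma~\ref{lem:fibered_sections} itself — but the assertion is true and routine, so this is a matter of detail rather than a gap.
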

\begin{proof} 
The canonical morphism $\Phi\colon \Sec{f^*Z}{\C_{\Sec{W}{\C}}}\rightarrow \Sec{Z}{\C}$ is a morphism of categories fibered in groupoids over $\mathfrak{U}$, so to show $\Phi$ is an equivalence, it suffices to study the induced map on fibers over a scheme $T \rightarrow \mathfrak{U}$.

We compute the fiber of $\mathcal{F}:= \Sec{f^*Z}{\C_{\Sec{W}{\C}}}$.
The fiber of $\mathcal{F}$ over an arrow $T\rightarrow \Sec{W}{\C}$ is $\Hom_{\C_{\Sec{W}{\C}}}(\C_T, f^*Z)$; by Lemma \ref{lem:fibered_sections} this is equivalent to $\Hom_W(\C_T, Z)$. Hence $\mathcal{F}(T \rightarrow \mathfrak{U})$ is the groupoid of dotted arrows
\[
\begin{tikzcd}
& Z\arrow[d, "q"]\\
& W \arrow[d, "p"] \\
\C_T \arrow[d] \arrow[ur, dashrightarrow, ""{name=A, above}, ""{name=C, below}] \arrow[uur, dashrightarrow, ""{name=B, below}] \arrow[r, ""{name=D, above}, "i"'] \arrow[Rightarrow, from=B, to=A]\arrow[Rightarrow, from=C, to=D]& \C\arrow[d]\\
T \arrow[r] & \mathfrak{U}
\end{tikzcd}
\]
Specifically, an object of $\mathcal{F(T)}$ is a tuple $(z, w, \tau, \omega)$ where $z\colon \C_T \rightarrow Z$ and $w\colon \C_T \rightarrow W$ are 1-morphisms, and $\tau\colon q\circ z \rightarrow w$ and $\omega\colon p \circ w \rightarrow i$ are 2-morphisms. An arrow in $\mathcal{F}(T)$ from $(z_1, w_1, \tau_1, \omega_1)$ to $(z_2, w_2, \tau_2, \omega_2)$ is a pair of 2-morphisms $\alpha\colon w_1 \rightarrow w_2$ and $\beta\colon z_1 \rightarrow z_2$ such that $\omega_1 = \omega_2 \circ p(\alpha)$ and $\alpha \circ \tau_1 = \tau_2 \circ q(\beta)$.

Now let $\mathcal{G}$ be the usual construction of the fiber product for the diagram\footnote{Compare with \cite[Tag~06N7]{stacks-project}.}
\[
\begin{tikzcd}
\mathcal{G} \arrow[r, "\pi_2"] \arrow[d] &\Sec{Z}{\C}\arrow[d]\\
\Sec{W}{\C} \arrow[r, "id"] &\Sec{W}{\C}
\end{tikzcd}
\]
The map $\Phi$ factors as $\mathcal{F} \xrightarrow{\Phi'} \mathcal{G} \xrightarrow{\pi_2} \Sec{Z}{\C}.$
Of course $\pi_2$ is an equivalence of stacks over $\mathfrak{U}$. On the other hand, we claim that $\Phi'$ induces the literal identity map from $\mathcal{F}(T)$ to $\mathcal{G}(T)$. By definition, an object of the fiber of $F$ is a tuple $(w, \omega; z, \zeta; \tau)$, where $w\colon \C_T \rightarrow W$ and $z\colon \C_T \rightarrow Z$ are 1-morphisms, $\omega\colon p\circ w \rightarrow i$ and $\zeta\colon p\circ q \circ z \rightarrow i$ are 2-morphisms, and $\tau\colon q\circ z \rightarrow w$ is a 2-morphism such that $\zeta = \omega \circ p(\tau)$. The final condition determines $\zeta$ from the other data, and hence these objects are literally the same as the objects of $\mathcal{F}$. Arrows in these two groupoids are also literally the same.

\end{proof}

\begin{lemma}\label{lem:basechange2}
Let $Z\rightarrow \C \rightarrow \mathfrak{U}$ be as above. Suppose $Z' \rightarrow \C' \rightarrow \mathfrak{U}'$ is another tower of the same type, and suppose we have a commuting diagram of fibered squares
\[
\begin{tikzcd}
Z' \arrow[r] \arrow[d]& Z \arrow[d] \\
\C' \arrow[r] \arrow[d] & \C \arrow[d] \\
\mathfrak{U}' \arrow[r, "f"] & \mathfrak{U}
\end{tikzcd}
\]
Then there is a canonical isomorphism
\begin{equation}\label{eq:basechange2}
\Sec{Z'}{\C'} \cong \Sec{Z}{\C}\times_\mathfrak{U} \mathfrak{U}'
\end{equation}
of stacks over $\mathfrak{U}'.$
\end{lemma}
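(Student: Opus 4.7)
My plan is to mimic the proof of Lemma~\ref{lem:basechange}: construct a canonical comparison morphism and verify it is an equivalence by computing the fibers over an arbitrary scheme $T \to \fU'$, using Lemma~\ref{lem:fibered_sections} to absorb the fiberedness of the towers.

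First I would construct the comparison morphism. The morphism $\C' \to \C$ and the base change $Z' \to Z$ together with the projection $\Sec{Z'}{\C'} \to \fU'$ give, for each $T \to \fU'$, a tautological section $\C' \times_{\fU'} T \to Z'$. Composing with $Z' \to Z$ and using the identification $\C' \times_{\fU'} T = \C \times_\fU T$ coming from the left fibered square produces a section over $\C \times_\fU T \to \C$ of $Z$, hence a $T$-point of $\Sec{Z}{\C}$ lying over the composition $T \to \fU' \to \fU$. By the universal property of the fiber product, this yields a canonical map
\[
\Phi\colon \Sec{Z'}{\C'} \longrightarrow \Sec{Z}{\C} \times_\fU \fU'
\]
of stacks over $\fU'$.

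Next I would check that $\Phi$ is an equivalence by comparing fibers over an affine scheme $T \to \fU'$. By definition, the $T$-points of $\Sec{Z'}{\C'}$ are $\Hom_{\C'}(\C' \times_{\fU'} T, Z')$, and those of $\Sec{Z}{\C} \times_\fU \fU'$ are (up to the usual homotopy-fiber-product bookkeeping) pairs consisting of an element of $\Hom_{\C}(\C \times_\fU T, Z)$ together with a compatibility $2$-morphism with the given map $T \to \fU' \to \fU$. The identifications $\C' \times_{\fU'} T \cong \C \times_\fU T$ and $Z' \cong Z \times_\C \C'$ let me apply Lemma~\ref{lem:fibered_sections} to the diagram
\[
\begin{tikzcd}
 & Z' \ar[r] \ar[d] & Z \ar[d] \\
\C \times_\fU T \ar[r] & \C' \ar[r] & \C
\end{tikzcd}
\]
which identifies $\Hom_{\C'}(\C \times_\fU T, Z')$ with $\Hom_\C(\C \times_\fU T, Z)$ naturally in $T$. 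This is exactly the data matching the $T$-points of $\Sec{Z}{\C} \times_\fU \fU'$, and one checks that $\Phi$ realizes this identification.

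The only genuine subtlety, as in Lemma~\ref{lem:basechange}, is pedantic $2$-categorical bookkeeping: ensuring the $2$-morphisms that are packaged into the fiber product on the right-hand side correspond bijectively to those implicit in the equivalence supplied by Lemma~\ref{lem:fibered_sections}, and that morphisms of such data (not just objects) match. This is a direct verification once one writes out the tuples $(z', w', \tau, \omega)$ analogous to those in the proof of Lemma~\ref{lem:basechange}; I expect no conceptual obstacle beyond this bookkeeping, and the naturality in $T$ then upgrades the fiberwise equivalence to an isomorphism of stacks over $\fU'$.
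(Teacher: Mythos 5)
Your proposal is correct and follows essentially the same route as the paper: construct the canonical comparison map, reduce to fibers over a test scheme $T \to \mathfrak{U}'$, apply Lemma~\ref{lem:fibered_sections} to identify the fiber of $\Sec{Z'}{\C'}$ with $\Hom_{\C}(\C_T, Z)$, and then match this against the explicit description of the fiber product's $T$-points. The ``bookkeeping'' you defer is exactly what the paper's proof writes out (essential surjectivity via the $2$-morphism $\alpha$, fully faithfulness by forcing $\tau = \mathrm{id}$), and it goes through as you expect.
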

\begin{proof}
Let $\mathcal{F} = \Sec{Z}{\C}\times_{\mathfrak{U}}\mathfrak{U}'$. First, observe that a slight extension of the argument in \cite[Tag~06N7]{stacks-project} shows that $\mathcal{F}$ is indeed fibered in groupoids over $\mathfrak{U}'$. So to show that the canonical map $\Phi\colon \Sec{Z'}{\C} \rightarrow\mathcal{F}$ is an equivalence, it suffices to show it is an equivalence on the fiber over arbitrary $x\colon T \rightarrow \mathfrak{U}'$. 

The fiber $\mathcal{F}(T)$ has for objects tuples $(a, \alpha, n, \nu)$ where (letting $C_a = C \times_{\mathfrak{U}, a} T$) $a\colon T \rightarrow U$ and $n\colon C_a \rightarrow Z$ are 1-morphisms and $\alpha\colon f \circ x \rightarrow a$ and $\nu$ are 2-morphisms ($\nu$ witnesses the commutativity of a triangle, one of whose sides is $n$). A morphism in $\mathcal{F}(T)$ from $(a, \alpha, n, \nu)$ to $(b, \beta, m, \mu)$ is a tuple $(\tau, \sigma)$ where $\tau\colon a \rightarrow b$ and $\sigma\colon n \rightarrow m \circ c_{\tau}$ are 2-morphisms (here $c_{\tau}\colon C_a\rightarrow C_b$ is the morphism induced by $\tau$), such that (1) $\beta^{-1}\circ \tau \circ \alpha$ is the identity, and (2) the 2-cell with faces $\sigma$, $\nu$, $\mu$, and one other face determined by $\tau$ is commutative. 

The fiber $\Sec{Z'}{\C'}$ is by Lemma \ref{lem:fibered_sections} equal to $\Hom_{\C}(\C_T, Z)$. This groupoid has for objects pairs $(n, \nu)$ where $n\colon \C_T \rightarrow Z$ is a 1-morphism and $\nu$ is a 2-morphism witnessing the commutativity of the triangle over $\C$. A morphism from $(n, \nu)$ to $(m, \mu)$ is a 2-morphism $\sigma\colon n \rightarrow m$ such that the 2-cell with $\sigma, \nu,$ and $\mu$ commutes.

Let $\Phi_T\colon \Sec{Z'}{\C'} \rightarrow \mathcal{F}(T)$ be the restriction of $\Phi$ to the fiber. Then $\Phi_T$ sends $(n, \nu)$ to $(f \circ x, id, n, \nu)$ and $\sigma$ to $(id, \sigma)$. The map $\Phi_T$ is essentially surjective because $\alpha$ induces an isomorphism from an object in the image of $\Phi_T$ to $(a, \alpha, n, \nu)$. It is fully faithful because if $\beta=\alpha=id$, then condition (2) forces $\tau = id$.
\end{proof}

\begin{lemma}
  \label{lem:closed}
  There is a natural morphism $\Sec{Z}{\C} \rightarrow \Sec{W}{\C}$. If $Z \rightarrow W$ is a closed embedding, then so is $\Sec{Z}{\C} \rightarrow \Sec{W}{\C}$.
\end{lemma}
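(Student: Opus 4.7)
The plan is first to define the natural morphism by composition: a $T$-point of $\Sec{Z}{\C}$ is a morphism $\C_T \to Z$ over $\C$, and post-composing with $Z \to W$ yields a morphism $\C_T \to W$ over $\C$, that is, a $T$-point of $\Sec{W}{\C}$. This produces a morphism of stacks over $\mathfrak{U}$ functorially in $T$.

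Now suppose $Z \hookrightarrow W$ is a closed embedding. I will show the induced morphism is representable by closed immersions. For a scheme $T$ with a morphism to $\Sec{W}{\C}$ corresponding to $g\colon \C_T \to W$ over $\C$, Lemma~\ref{lem:basechange} identifies the fiber product
\[
T \times_{\Sec{W}{\C}} \Sec{Z}{\C} \;\cong\; \Sec{g^*Z}{\C_T},
\]
and $g^*Z \hookrightarrow \C_T$ is closed by base change. A $T'$-point of $\Sec{g^*Z}{\C_T}$ is a section of the closed embedding $g^*Z \times_T T' \hookrightarrow \C_{T'}$, i.e., a retraction, which exists and is then unique precisely when the inclusion is an equality of subschemes of $\C_{T'}$. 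Hence I am reduced to showing that the subfunctor of $h_T$ defined by $T' \mapsto \{T' \to T : g^*Z \times_T T' = \C_{T'}\}$ is representable by a closed subscheme of $T$.

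For this last step I would work locally with $T = \Spec A$, let $\mathcal{I} \subset \mathcal{O}_{\C_T}$ be the ideal sheaf of $g^*Z$, and choose a finite affine cover $\{V_i = \Spec B_i\}$ of $\C_T$ (possible since $\C_T$ is quasi-compact). Picking finitely many generators $\{b_{ij}\}$ for each $\mathcal{I}|_{V_i}$, the condition for $T' = \Spec(A/J)$ becomes $b_{ij} \in J B_i$ for all $i, j$. Since each $B_i$ is flat over $A$, the assignment $J \mapsto J B_i$ commutes with intersections, so the set of ideals $J \subset A$ satisfying the condition is closed under arbitrary intersections and therefore admits a unique minimal element $J_0$; then $V(J_0) \subset T$ represents the subfunctor.

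The main obstacle I expect is the passage from this local description to an intrinsic closed subscheme of $T$, i.e., checking that the $J_0$'s glue under refinement of the cover of $\C_T$ and are compatible with Zariski localization on $T$; this should be routine from the uniqueness characterization of $J_0$, but needs to be carefully tracked. As a cleaner alternative route, I would consider showing instead that $\Sec{Z}{\C} \to \Sec{W}{\C}$ is a proper monomorphism and appealing to the standard fact that a proper monomorphism of algebraic stacks is a closed embedding: the monomorphism property is immediate from $Z \hookrightarrow W$ being a mono, while properness follows from the valuative criterion---for $R$ a DVR with fraction field $K$, scheme-theoretic density of $\C_K$ in $\C_R$ (from flatness of $\C_R \to \Spec R$) forces any map $\C_R \to W$ whose $K$-fiber factors through $Z$ to itself factor through $Z$.
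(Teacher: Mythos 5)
Your first route founders at exactly the step you flag, and the problem is worse than a gluing issue. The sublemma you need is that, for a closed subscheme $V \subset \C_T$ of a proper flat family of curves, the locus of $T' \to T$ over which $V_{T'} = \C_{T'}$ is a closed subscheme of $T$. This is true---it is the vanishing locus of the map $\mathcal{I}_V \to \OO_{\C_T}$ into a sheaf flat over $T$, cf.\ EGA~III~7.7.8---but the proof genuinely uses properness of $\C_T \to T$, not merely flatness of the individual affine charts. Your minimal-ideal argument is incorrect as stated: for a flat, finitely presented $A$-algebra $B$ the operation $J \mapsto JB$ commutes with \emph{finite} intersections but not with arbitrary ones. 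For instance $A = \ZZ$, $B = \ZZ[1/p]$, $b = 1$: then $b \in p^nB$ for every $n$, while $\bigcap_n (p^n) = (0)$ and $b \notin (0)B$; the set of ideals $J$ with $b \in JB$ has no minimal element. Flatness and finite presentation of $B_i$ as an $A$-algebra do not make $B_i$ a projective $A$-module, which is what the intersection claim would require, so the local step already fails before any gluing is attempted.

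Your alternative route is sound and is closest in spirit to what the paper does, though the implementations differ. The paper also reduces, via Stacks Project 04XV, to showing that $\iota\colon \Sec{Z}{\C} \to \Sec{W}{\C}$ is a monomorphism (immediate) which is universally closed and locally of finite type; but instead of the valuative criterion it observes that the complement of the image of $\iota$ is $\pi(\n^{-1}(W \setminus Z))$, where $\pi$ is the universal curve over $\Sec{W}{\C}$ and $\n$ the universal section, and that this set is open because $\pi$ is flat and finitely presented, hence open. Universal closedness then follows because the whole setup is stable under base change (Lemma \ref{lem:basechange2}). This sidesteps the one point your argument leaves open, namely the quasi-compactness of $\iota$, without which the existence part of the valuative criterion does not yield universal closedness. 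Your density argument over a DVR (flatness makes $\C_K$ scheme-theoretically dense in $\C_R$, so a factorization of $\C_K \to W$ through $Z$ propagates to $\C_R$) is correct and buys a purely algebraic verification in place of the paper's topological one; if you supply quasi-compactness, it gives a complete alternative proof.
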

\begin{proof}
  Let $\mathfrak{S}' = \Sec{Z}{\C}$ and $\mathfrak{S} = \Sec{W}{\C}$.
  Since $\mathfrak{S}' \rightarrow \mathfrak{U}$ is already locally of finite
  type, by \cite[Tag~04XV]{stacks-project} it suffices to show that
  $\iota\colon \mathfrak{S}' \rightarrow \mathfrak{S}$ is universally closed and a
  monomorphism.
  The monomorphism property is immediate using the characterization in
  \cite[Tag~04ZZ]{stacks-project}.
  By Lemma \ref{lem:basechange2}, to show that $\iota$ is universally closed it suffices to prove that
  it is a closed map, and since we already know that $\iota$ is a
  momomorphism, it suffices to show that $\iota(\mathfrak{S}')$ is closed in $\mathfrak{S}$.
  Now the set $\iota(\mathfrak{S}')$ consists of points whose $\pi_{\mathfrak{S}}$-fibers map
  completely into $Z$.
  Hence, $\mathfrak{S} \setminus \iota(\mathfrak{S}') = \pi_{\mathfrak{S}}(\n^{-1}(W \setminus Z))$.
  Since $\pi_{\mathfrak{S}}$ is flat, and hence open, this implies that $\iota(\mathfrak{S}')$
  is closed, which finishes the proof of the lemma.
  \end{proof}

\subsection{Properties of the obstruction theory}
\subsubsection{An adjunction-like morphism}
Let $\D({\C})$ (resp. $\D({\fU})$) denote the unbounded derived category of sheaves of $\OO$-modules on $\C$ (resp. $\fU$) in the lisse-\'etale topology. Let $\Dqc(\C)$ and $\Dqc(\fU)$ denote the corresponding subcategories on objects with quasi-coherent cohomology. Define an adjunction-like morphism
\[
a: \Hom_{\D(\C)}(F, \pi^*G) \rightarrow \Hom_{\D({\fU})}(R\pi_*(F\otimes \omegabul), G)
\]
by sending $f: F \rightarrow \pi^*G$ to the composition
\[
R\pi_*(F\otimes \omegabul) \xrightarrow{R\pi_*(f\otimes id)} R\pi_*(\pi^*G\otimes \omegabul) \xleftarrow{\sim} G \otimes R\pi_*\omegabul \xrightarrow{tr} G
\]
where the isomorphism is the projection formula and $tr$ is the trace map. Observe that $a$ is functorial in both arguments, meaning
\begin{enumerate}
\item Given $F' \in \D({\C})$ and $g \in \Hom_{\D({\C})}(F', F)$, we have $a(f \circ g) = a(f) \circ R\pi_*(g \otimes id)$.
\item Given $G' \in D({\fU})$ and $g \in \Hom_{\D({\fU})}(G, G')$, we have $a(\pi^*g \circ f) = g \circ a(f).$
\end{enumerate}
The next lemma, inspired by \cite[Lem~4.1]{ACGS-puncture}, says that $a$ commutes with pullback.

\begin{lemma}\label{lem:technical}
The adjunction-like map $a$ commutes with arbitrary basechange. Precisely, given a fiber square
\[
\begin{tikzcd}
K \arrow[r, "\mu_K"] \arrow[d, "\pi"] & \C \arrow[d, "\pi"]\\
B \arrow[r, "\mu_B"] & \fU
\end{tikzcd}
\]
and a morphism $f: F\rightarrow \pi^*G$, we have  $\mu_K^*f: \mu_K^*X \rightarrow \mu_K^*\pi^*Y = \pi^*\mu_B^*Y$, and the following diagram commutes:
\[
\begin{tikzcd}
\mu_B^*R\pi_*(F\otimes \omegabul) \arrow[r, "\mu_B^*a(f)"] \arrow[d, "\fa_\mu"', "\sim"] & \mu_B^*G \\
R\pi_*(\mu_K^*F\otimes \omegabul) \arrow[ur, "a(\mu_K^*f)"']
\end{tikzcd}
\]
The isomorphism $\fa_{\mu}$ is functorial in $X$. Moreover, if $\nu_B: B' \rightarrow B$ is a morphism of algebraic stacks with $K' := K\times_B B'$ and $\nu_K: K' \rightarrow K$ the projection, then these isomorphisms satisfy the cocycle condition $\fa_{\mu\circ \nu} = \fa_{\nu} \circ \nu_K^* \fa_{\mu}$.
\end{lemma}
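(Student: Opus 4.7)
The plan is to construct $\fa_\mu$ from two canonical isomorphisms and then verify compatibility term by term. Because $\pi\colon \C \to \fU$ is flat, the fiber square in the lemma is automatically tor-independent for any $\mu_B$. Hence for any $F \in \Dqc(\C)$ we have a canonical flat base-change isomorphism $\mu_B^* R\pi_*(F) \xrightarrow{\sim} R\pi_*(\mu_K^* F)$; this holds for algebraic stacks by the results of Hall--Rydh \cite{HR17}. Combined with the base-change isomorphism $\mu_K^* \omegabul_\fU \xrightarrow{\sim} \omegabul_B$ that is part of the functoriality of the pair $(\omegabul, tr)$ established in \cite[Prop~2.2.6]{webb-thesis}, this gives the desired $\fa_\mu$.

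To verify the triangle commutes, I would decompose $a(f)$ into its three constituent morphisms: the pushforward $R\pi_*(f\otimes\mathrm{id})$, the inverse of the projection formula $G\otimes R\pi_*\omegabul \xrightarrow{\sim} R\pi_*(\pi^*G\otimes \omegabul)$, and the trace $G \otimes tr_\fU$. Each step commutes with $\fa_\mu$ for a distinct reason: the pushforward step by naturality of flat base change, the projection formula step by the standard base-change compatibility of the projection formula in tor-independent squares, and the trace step by the functoriality of $tr$ asserted in \cite[Prop~2.2.6]{webb-thesis}. Stringing these compatibilities together yields $a(\mu_K^* f) \circ \fa_\mu = \mu_B^* a(f)$. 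Functoriality of $\fa_\mu$ in $F$ is then immediate from the construction, since every constituent map is natural in the first slot of $R\pi_*(-\otimes\omegabul)$.

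The cocycle identity $\fa_{\mu\circ\nu} = \fa_\nu \circ \nu_K^*\fa_\mu$ follows formally from the analogous cocycle identities for flat base change of $R\pi_*$, for derived pullback $L(\mu_K\circ\nu_K)^* \cong L\nu_K^* \circ L\mu_K^*$, and for the base-change isomorphism of $\omegabul$. The main delicacy I anticipate is keeping careful track of all these compatibilities in the unbounded derived category $\D$ of the lisse-\'etale topos on algebraic stacks, rather than the more classical setting of Noetherian schemes; but all of the ingredients are available in that generality via \cite{HR17} and \cite{webb-thesis}, so once the bookkeeping is set up the argument is essentially formal.
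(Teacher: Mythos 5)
Your proposal is correct and follows essentially the same route as the paper's proof: the same construction of $\fa_\mu$ from the base-change isomorphism for $R\pi_*$ and the $\omegabul$ comparison of \cite[Prop~2.2.6]{webb-thesis}, the same three-step decomposition of $a(f)$ with the same compatibility used at each step (the projection-formula/base-change compatibility you invoke is the content of \cite[Lem~A.7(3)]{hall-GAGA} in the paper), and the same formal argument for the cocycle condition.
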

Before proving the lemma we note several canonical isomorphisms: 
\begin{align}
&\mu_B^*R\pi_*F \rightarrow R\pi_*\mu_K^*F & F \in \Dqc(\C)\label{eq:basechange-iso}\\
&G \otimes R\pi_*F \rightarrow R\pi_*(\pi^*G \otimes F) & F \in \Dqc(\C), G \in \Dqc(\fU)\label{eq:projection-iso}\\
&\mu_K^*\omegabul_\fU \rightarrow \omegabul_B \label{eq:omega-iso}
\end{align}
The first is \cite[Cor~4.13]{HR17}, the second is \cite[Cor~4.12]{HR17}, and the last is \cite[Prop~2.2.6]{webb-thesis}.

\begin{proof}[Proof of Lemma \ref{lem:technical}.]
 The desired commuting triangle is equivalent to 
\[
\begin{tikzcd}[row sep=small]
\mu_B^*R\pi_*(F\otimes \omegabul) \arrow[rr, "\mu_B^*a(f)"] \arrow[d, "\fa_{\mu}"] && \mu_B^*G \\
R\pi_*(\mu_K^*(F) \otimes \omegabul) \arrow[dr, "R\pi_*(\mu_K^*(f)\otimes \omegabul)"] && \mu_B^*G\otimes R\pi_*\omegabul \arrow[dl, "\pi", "\sim"'] \arrow[u, "tr"] \\ &R\pi_*(\mu_K^*(\pi^*G)\otimes \omegabul) = R\pi_*(\pi^*\mu_B^*G\otimes \omegabul) & 
\end{tikzcd}
\]
We demonstrate this diagram as the composition of three. From left to right, the first is
\begin{equation}\label{eq:tec1}
\begin{tikzcd}
\mu_B^*R\pi_*(F\otimes \omegabul) \arrow[r, "\mu_B^*R\pi_*(f \otimes \omegabul)"] \arrow[d, "\fa_{\mu}"', "\sim"] &[6em] \mu_B^*R\pi_*(\pi^*G \otimes \omegabul) \arrow[d, "\sim"]\\
R\pi_*(\mu_K^*(F)\otimes \omegabul) \arrow[r, "R\pi_*(\mu_K^*(f)\otimes \omegabul)"] & R\pi_*(\mu_K^*\pi^*G\otimes \omegabul)
\end{tikzcd}
\end{equation}
where the vertical arrows are \eqref{eq:basechange-iso} followed by the strong monoidal map (commutativity of $\otimes$ and $\mu_K^*$), and finally \eqref{eq:omega-iso}. It commutes by functoriality of \eqref{eq:basechange-iso} and the strong monoidal map, and because \eqref{eq:omega-iso} and $f$ act on different factors of the tensor product. The second diagram is
\begin{equation}\label{eq:tec2}
\begin{tikzcd}
\mu_B^*R\pi_*(\pi^*G\otimes \omegabul) \arrow[d, "\text{\eqref{eq:basechange-iso}}"] & \mu_B^*(G\otimes R\pi_*\omegabul) \arrow[l, "\mu_B^*\eqref{eq:projection-iso}", "\sim"'] \arrow[d, equal] \\
R\pi_*\mu_K^*(\pi^*G\otimes \omegabul) \arrow[d, equal] &\mu_B^*G\otimes \mu_B^*R\pi_*\omegabul \arrow[d, "\text{\eqref{eq:basechange-iso}}"] \\
R\pi_*(\mu_K^*f^*G\otimes \mu_K^*\omegabul)=R\pi_*(\pi^*\mu_B^*G\otimes \mu_K^*\omegabul)\arrow[d] & \mu_B^*G\otimes R\pi_*\mu_K^*\omegabul \arrow[l, "\eqref{eq:projection-iso}", "\sim"'] \arrow[d, "id \otimes R\pi_*\text{\eqref{eq:omega-iso}}"]\\
R\pi_*(\mu_K^*f^*G\otimes \omegabul) = R\pi_*(\pi^*\mu_B^*G\otimes \omegabul) & \mu_B^*Y\otimes R\pi_*S \arrow[l, "\text{\eqref{eq:projection-iso}}","\sim"']
\end{tikzcd}
\end{equation}
The top cell commutes by \cite[Lem~A.7(3)]{hall-GAGA} and the bottom cell is functoriality of \eqref{eq:projection-iso}. The final diagram is
\begin{equation}\label{eq:tec3}
\begin{tikzcd}[row sep=small]
\mu_B^*(G\otimes R\pi_*\omegabul) \arrow[d, equal] \arrow[r, "\mu_B^*(id \otimes tr)"] & [6em] \mu_B^*G\arrow[d, equal]\\
\mu_B^*Y\otimes \mu_B^*R\pi_*\omegabul \arrow[r, "id \otimes \mu_B^*tr"] \arrow[d, "\text{\eqref{eq:basechange-iso}}"'] & \mu_B^*G \\
\mu_B^*G\otimes R\pi_*\mu_K^*\omegabul \arrow[r, "id \otimes R\pi_*\alpha"']&
\mu_B^*G\otimes R\pi_*\omegabul \arrow[u, "id \otimes tr"']
\end{tikzcd}
\end{equation}
where the bottom cell is the diagram \cite[Prop~2.2.6]{webb-thesis}.
One sees from the definition of $a$ that the composition of the top arrows in \eqref{eq:tec1}, \eqref{eq:tec2}, and \eqref{eq:tec3} is $\mu_B^*a(f).$ Finally, $\fa_{\mu}$ is functorial in $F$ by its definition, and it satisfies the cocycle condition because each of the morphisms in its definition do: for \eqref{eq:basechange-iso} the cocycle condition is \cite[Tag~0E47]{stacks-project} and for \eqref{eq:omega-iso} it is \cite[Prop~2.2.6]{webb-thesis}.
\end{proof}



\subsubsection{Functoriality}
Suppose we have a commuting diagram of algebraic stacks
\begin{equation}\label{eq:ot-diagram}
\begin{tikzcd}
& Z \arrow[r] & W \arrow[d] \\
K_{Z} \arrow[d, "\pi"]\arrow[ur, "f"] \arrow[r] & K_{W} \arrow[r] \arrow[d] \arrow[ur] & \C \arrow[d] \\
B_{Z} \arrow[r] & B_{W} \arrow[r] & \fU
\end{tikzcd}
\end{equation}
where the squares in the bottom row are fibered. To a diagram of the form \eqref{eq:ot-diagram} we associate a morphism in $\Dqc(B_{Z})$ as follows. We have a morphism in $\Dqc(K_{Z})$ consisting of canonical morphisms of cotangent complexes:
\begin{equation}\label{eq:build1}
Lf^*\LL_{Z/W} \rightarrow \LL_{K_{Z}/K_{W}} \xleftarrow{\sim} \pi^*\LL_{B_{Z}/B_{W}}.
\end{equation}
We may apply the adjunction-like morphism $a$ to \eqref{eq:build1}, obtaining
\begin{equation}\label{eq:generalpot}
\phi_{B_{Z}/B_{W}} : \EE_{B_{Z}/B_{W}} \rightarrow \LL_{B_{Z}/B_{W}}, \quad \quad \quad \EE_{B_{Z}/B_{W}}:= R\pi_*(Lf^*\LL_{Z/W} \otimes \omegabul_{B_Z}).
\end{equation}
Observe that when $K_W = W = \C$, $B_W=\fU$ and $B_Z = \Sec{Z}{\C}$ we recover \eqref{eq:candidatepot}.
\begin{remark}\label{rmk:equiv}
It follows from the functoriality of $a$ in the first argument that when either $W \rightarrow \C$ or $K_W \rightarrow \C$ is flat, the morphism $\phi_{B_Z/B_W}$ defined by the diagram \eqref{eq:ot-diagram} is isomorphic to $\phi_{B_{Z'}/B_W}$ defined by the diagram obtained from \eqref{eq:ot-diagram} by setting $Z' = Z\times_s K_W$, $B_W = \fU$, and $K_W = W = \C$.
\end{remark}

The morphism \eqref{eq:generalpot} inherits the functoriality properties of the morphisms of cotangent complexes used to define it.

\begin{lemma}\label{lem:functoriality1}
Let $Z \xrightarrow{g} W \rightarrow V$ be morphisms of algebraic stacks over $\C$. Let $B_{Z}, B_{W}$, and $B_{V}$ be quasi-separated algebraic stacks locally of finite type fitting in a fiber diagram
\[
\begin{tikzcd}
K_{Z} \arrow[d] \arrow[r] & K_{W} \arrow[r] \arrow[d] & K_{V} \arrow[r]\arrow[d] &\C \arrow[d] \\
B_{Z} \arrow[r, "\mu"] & B_{W} \arrow[r] & B_{V} \arrow[r]& \fU
\end{tikzcd}
\]
and suppose we are given morphisms $f: K_{Z} \rightarrow Z$, $ K_{W}\rightarrow W$, and $K_{V}\rightarrow V$ so that the analog of \eqref{eq:ot-diagram} commutes. Then there is a morphism of (canonical) distinguished triangles
\begin{equation}\label{eq:a.2.2-1}
\begin{tikzcd}
L\mu^*\EE_{B_W/B_V} \arrow[r] \arrow[d, "L\mu^*\phi_{B_W/B_V}"]&\EE_{B_Z/B_V} \arrow[r]\arrow[d, "\phi_{B_Z/B_V}"]& \EE_{B_Z/B_W} \arrow[r] \arrow[d, "\phi_{B_Z/B_W}"]& {}\\
L\mu^*\LL_{B_W/B_V} \arrow[r] & \LL_{B_Z/B_V} \arrow[r] & \LL_{B_Z/B_W} \arrow[r] & {}
\end{tikzcd}
\end{equation}\end{lemma}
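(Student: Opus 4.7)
The plan is to construct the top distinguished triangle of \eqref{eq:a.2.2-1} by applying $R\pi_*(-\otimes \omegabul_{B_Z})$ to the $Lf^*$-pullback of the canonical cotangent triangle for $Z\xrightarrow{g}W\to V$, identify the leftmost term with $L\mu^*\EE_{B_W/B_V}$ via the base-change isomorphism of Lemma \ref{lem:technical}, take the bottom row to be the canonical cotangent triangle for $B_Z\xrightarrow{\mu}B_W\to B_V$, and finally verify that the two resulting squares commute using the functoriality of the adjunction-like morphism $a$ in both arguments together with the base-change compatibility of $a$.

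First I would apply $Lf^*$ to the canonical distinguished triangle $Lg^*\LL_{W/V}\to \LL_{Z/V}\to \LL_{Z/W}\xrightarrow{[1]}$ in $\Dqc(Z)$, tensor with $\omegabul_{B_Z}$, and push forward along $\pi\colon K_Z\to B_Z$. Since both operations preserve distinguished triangles, I obtain
\[
R\pi_*(Lf^*Lg^*\LL_{W/V}\otimes \omegabul_{B_Z})\to \EE_{B_Z/B_V}\to \EE_{B_Z/B_W}\xrightarrow{[1]}.
\]
Writing $f=f_W\circ \mu_K$ with $\mu_K\colon K_Z\to K_W$ the base change of $\mu$, we have $Lf^*Lg^*\LL_{W/V}=L\mu_K^* Lf_W^*\LL_{W/V}$, so the base-change isomorphism $\fa_\mu$ of Lemma \ref{lem:technical} (combined with \eqref{eq:omega-iso}) produces a canonical quasi-isomorphism
\[
L\mu^*\EE_{B_W/B_V}=L\mu^*R(\pi_W)_*(Lf_W^*\LL_{W/V}\otimes \omegabul_{B_W})\xrightarrow{\sim} R\pi_*(Lf^*Lg^*\LL_{W/V}\otimes \omegabul_{B_Z}),
\]
giving the top row. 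The bottom row is tautologically the cotangent triangle for $B_Z\xrightarrow{\mu}B_W\to B_V$.

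Next I would verify the two squares. For the right square, both compositions $\EE_{B_Z/B_V}\to \EE_{B_Z/B_W}\to \LL_{B_Z/B_W}$ and $\EE_{B_Z/B_V}\to \LL_{B_Z/B_V}\to \LL_{B_Z/B_W}$ equal $a$ applied to the same morphism $Lf^*\LL_{Z/V}\to Lf^*\LL_{Z/W}\to \LL_{K_Z/K_W}\xleftarrow{\sim}\pi^*\LL_{B_Z/B_W}$ in $\Dqc(K_Z)$: this is immediate from the naturality of the cotangent triangles in the diagram defining each $\phi$ and from functoriality of $a$ in each variable. For the left square, $\phi_{B_W/B_V}$ is by construction $a$ applied to the morphism $Lf_W^*\LL_{W/V}\to \pi_W^*\LL_{B_W/B_V}$ in $\Dqc(K_W)$, and commutativity follows from the compatibility triangle of Lemma \ref{lem:technical} applied to this morphism with base change $\mu$, combined with the identification $L\mu_K^*\pi_W^*\LL_{B_W/B_V}=\pi^*L\mu^*\LL_{B_W/B_V}$ and the naturality of the canonical map $L\mu^*\LL_{B_W/B_V}\to \LL_{B_Z/B_V}$.

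The main obstacle will be the left square: the base-change isomorphism $\fa_\mu$ interacts nontrivially with the several pullbacks and the identification of the dualizing complexes, and one must thread the cocycle condition from Lemma \ref{lem:technical} together with the isomorphisms \eqref{eq:basechange-iso}–\eqref{eq:omega-iso} to see that the diagram closes up in $\Dqc(B_Z)$. Once this is unwound, the morphism of distinguished triangles is produced by the three vertical maps just described, with no further choice required to complete the data.
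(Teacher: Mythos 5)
Your proposal is correct and follows essentially the same route as the paper: form the morphism of cotangent-complex distinguished triangles over $K_Z$ (the paper's \eqref{eq:a.2.2}), apply the adjunction-like operation $a$, and invoke its functoriality in both arguments together with the base-change compatibility $\fa_\mu$ of Lemma \ref{lem:technical} to identify the left column with $L\mu^*\phi_{B_W/B_V}$. The only cosmetic difference is that you build the top triangle first and then check the squares, whereas the paper packages everything as one morphism of triangles before applying $a$.
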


\begin{proof}By the functoriality in \cite[Lem~2.2.12]{webb-thesis} we have a morphism of distinguished triangles
\begin{equation}\label{eq:a.2.2}
\begin{tikzcd}
Lf^*Lg^*\LL_{W/V} \arrow[r] \arrow[d]&Lf^*\LL_{Z/V} \arrow[r]\arrow[d]& Lf^*\LL_{Z/W} \arrow[r] \arrow[d]& {}\\
\pi^*L\mu^*\LL_{B_W/B_V} \arrow[r] & \pi^*\LL_{B_Z/B_V} \arrow[r] & \pi^*\LL_{B_Z/B_W} \arrow[r] & {}
\end{tikzcd}
\end{equation}
where each vertical arrow is the compositions of two canonical morphisms of cotangent complexes, one of which is inverted in the derived category. Now apply the adjunction-like operation $a$ to each vertical arrow. Since $a$ is functorial in both arguments and commutes with pullback by Lemma \ref{lem:technical}, we obtain \eqref{eq:a.2.2-1}.

\end{proof}

\begin{lemma}\label{lem:functoriality2}
Suppose we have commuting squares
\begin{equation}\label{eq:square1}
\begin{tikzcd}
\arrow[d]W \arrow[r, "g"] & Y \arrow[d] &&& B_W \arrow[r, "\mu_B"] \arrow[d] & B_Y \arrow[d]\\
X \arrow[r] & Z &&& B_X \arrow[r]& B_Z
\end{tikzcd}
\end{equation}
where the square on the left consists of stacks over $\C$ and the square on the right consists of stacks over $\fU$. Let $K_W = \C \times_{\fU} {B_W}$ (similarly for $X, Y$, and $Z$) and suppose we have a map $f_W: K_W \rightarrow W$ (and similarly for $X$, $Y$, and $Z$) such that the resulting large diagram is commutative. Then there is a commuting diagram
\begin{equation}\label{eq:functoriality2}
\begin{tikzcd}
L\mu_B^*\EE_{B_Y/B_Z} \arrow[r] \arrow[d, "L\mu_B^*\phi_{B_Y/B_Z}"] & \EE_{B_W/B_X}\arrow[d, "\phi_{B_W/B_X}"]\\
L\mu_B^*\LL_{B_Y/B_Z} \arrow[r] & \LL_{B_W/B_X}
\end{tikzcd}
\end{equation}
where the top (resp. bottom) horizontal arrow is an isomorphism if the left (resp. right) square in \eqref{eq:square1} is fibered and either $X \rightarrow Z$ or $Y \rightarrow Z$ (resp. $B_X \rightarrow \B_Z$ or $B_Y \rightarrow B_Z$) is flat.
\end{lemma}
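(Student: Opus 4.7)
The plan is to construct the square \eqref{eq:functoriality2} by combining functoriality of the cotangent complex with the naturality of the adjunction-like operation $a$ established in Lemma \ref{lem:technical}, and then to deduce the isomorphism statements from flat base change for the cotangent complex.

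Let $\mu_K\colon K_W\to K_Y$ denote the base change of $\mu_B$. Functoriality of the cotangent complex (\cite[Lem~2.2.12]{webb-thesis}) applied to the left square of \eqref{eq:square1} produces a canonical morphism $Lg^*\LL_{Y/Z}\to\LL_{W/X}$; pulling back along $f_W$ yields
\[
\psi\colon L\mu_K^*(Lf_Y^*\LL_{Y/Z})\simeq Lf_W^*Lg^*\LL_{Y/Z}\longrightarrow Lf_W^*\LL_{W/X}
\]
in $\Dqc(K_W)$. Analogously, the right square of \eqref{eq:square1} produces $\chi\colon L\mu_B^*\LL_{B_Y/B_Z}\to\LL_{B_W/B_X}$ in $\Dqc(B_W)$. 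Together with the defining morphisms of $\phi_{B_Y/B_Z}$ and $\phi_{B_W/B_X}$, the maps $\psi$ and $\chi$ fit into a commutative square in $\Dqc(K_W)$ by naturality of the cotangent complex construction applied to the cube formed by both squares of \eqref{eq:square1}. The top arrow of \eqref{eq:functoriality2} will be obtained from $\psi$ by tensoring with $\omegabul_{B_W}$, applying $R(\pi_{B_W})_*$, and invoking the base change isomorphism $\fa_\mu$ of Lemma~\ref{lem:technical} together with the compatibility $L\mu_K^*\omegabul_{B_Y}\simeq\omegabul_{B_W}$ from \eqref{eq:omega-iso}; the bottom arrow is simply $\chi$.

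For commutativity of \eqref{eq:functoriality2}, I would apply the two naturality properties of $a$ recorded immediately after its definition to the commutative square in $\Dqc(K_W)$ above, and then combine this with the identity $L\mu_B^*a(h)=a(L\mu_K^*h)\circ\fa_\mu$ furnished by Lemma~\ref{lem:technical}. For the isomorphism statements, I invoke flat base change for the cotangent complex: if the left square of \eqref{eq:square1} is fibered and either $Y\to Z$ or $X\to Z$ is flat, then the comparison map $Lg^*\LL_{Y/Z}\to\LL_{W/X}$ is a quasi-isomorphism, so $\psi$ is a quasi-isomorphism, and hence the top arrow of \eqref{eq:functoriality2} is an isomorphism. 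The case of the bottom arrow is identical using the right square.

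The main obstacle is the bookkeeping of canonical isomorphisms---the base change map for $R\pi_*$, the projection formula, and the compatibility of $\omegabul$ with pullback---but these compatibilities are precisely what Lemma~\ref{lem:technical} packages up. Once that lemma is invoked, the argument reduces to naturality of the cotangent complex construction and the two functoriality properties of $a$.
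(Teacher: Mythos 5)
Your proposal is correct and follows essentially the same route as the paper: the paper's proof simply applies the operation $a$ to the commuting square of canonical cotangent-complex morphisms $Lf_W^*Lg^*\LL_{Y/Z}\to Lf_W^*\LL_{W/X}$ over $\pi^*L\mu_B^*\LL_{B_Y/B_Z}\to\pi^*\LL_{B_W/B_X}$, with commutativity of \eqref{eq:functoriality2} coming from the functoriality of $a$ and Lemma~\ref{lem:technical}, exactly as you describe. Your additional justification of the isomorphism claims via Tor-independent base change for the cotangent complex (and the base-change isomorphism $\fa_\mu$) correctly fills in a step the paper leaves implicit.
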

\begin{proof}
As in the proof of Lemma \ref{lem:functoriality1}, apply $a$ to the following commuting diagram of canonical morphisms of cotangent complexes.
\[
\begin{tikzcd}
Lf_W^*Lg^*\LL_{Y/Z} \arrow[r] \arrow[d] & Lf_W^*\LL_{W/X} \arrow[d] \\
\pi^*L\mu^*_B \LL_{B_Y/B_Z} \arrow[r] & \pi^*\LL_{B_W/B_X}
\end{tikzcd}
\]
\end{proof}
\subsubsection{A quasimap example}
Fix a complex affine reductive group $G$.
Let $\mathfrak{M} = \mathfrak{M}^{\mathrm{tw}}_{g,n}$ denote the moduli space of prestable orbifold curves of genus $g$ with $n$ markings, and let $\mathfrak{\Ct}$ be its universal curve. Denote by $\mathfrak{B} = \Sec{\Ct\times BG}{\Ct}$ the moduli stack of prestable orbicurves together with a principal $G$-bundle, and let $\mathfrak{P} \rightarrow \Ct_\mathfrak{B}$ be the universal principal bundle over its universal curve.

Now let $Y$ be an affine l.c.i. variety with an action by $G$. Let $\theta$ be a character of $G$ with $Y^{ss}_\theta = Y^s_\theta$ smooth. Fix $\epsilon >0$ and $\beta \in \Hom(Pic([Y/G]), \QQ)$. Then \cite{CCK15} defines a moduli space of $\epsilon$-stable quasimaps $\Mbar_{g,n}^\epsilon([Y/G], \beta)$ as an open substack of $\Sec{\mathfrak{P}\times_GY}{\Ct_\mathfrak{B}}$. By Lemma \ref{lem:basechange}, the stack $\Mbar_{g,n}^\epsilon([Y/G], \beta)$ is identified with an open substack of $\Sec{\Ct\times[Y/G]}{\Ct}$.

We now investigate the obstruction theory (see also the discussion in \cite[Section 5.3]{CiKi10}). Setting $\Mbar = \Mbar_{g,n}^\epsilon([Y/G], \beta),$ we have the following commuting diagram.
\begin{equation}
\begin{tikzcd}[row sep=9pt]
&&\Ct \times [Y/G]\arrow[d, "q"] \\
& \fP\times_G Y \arrow[d] \arrow[ur] & \Ct\times BG \arrow[d]&\\
\Ct_\Mbar \arrow[ur, "u"]  \arrow[r] \arrow[d]& \Ct \arrow[r] \arrow[d] \arrow[ur]& \Ct \arrow[d] &\\
\Mbar \arrow[r, "\mu"] &\mathfrak{B} \arrow[r] & \mathfrak{M}&\\
\end{tikzcd}
\end{equation}
By Lemma \ref{lem:functoriality1} and Remark \ref{rmk:equiv} we have the following morphism of distinguished triangles:
\[
\begin{tikzcd}
L\mu^*\EE_{\mathfrak{B}/\mathfrak{M}} \arrow[d, "L\mu^*\phi_{\mathfrak{B}/\mathfrak{M}}"] \arrow[r] & \EE_{\Mbar/\mathfrak{M}} \arrow[d, "\phi_{\Mbar/\mathfrak{M}}"] \arrow[r] & \EE_{\Mbar/\mathfrak{B}} \arrow[d, "\phi_{\Mbar/\mathfrak{B}}"] \arrow[r] & {}\\
L\mu^*\LL_{\mathfrak{B}/\mathfrak{M}}  \arrow[r] & \LL_{\Mbar/\mathfrak{M}} \arrow[r] & \LL_{\Mbar/\mathfrak{B}}  \arrow[r] & {}
\end{tikzcd}
\]
where the morphism in the third column is constructed by setting $Z=\fP \times_G Y$, $W=\C=K_W = \Ct$, and $B_W = \fU = \mathfrak{B}$ in \eqref{eq:ot-diagram}. The map $\mathfrak{B} \rightarrow \mathfrak{M}$ is ``relatively Artin'' so that the complex $L\mu^*\EE_{\mathfrak{B}/\mathfrak{M}}$ is perfect in $[-1,1]$. It is known that even in this setting, the morphism $\phi_{\mathfrak{B}/\mathfrak{M}}$ is an obstruction theory, meaning that $h^1$ and $h^0$ of $L\mu^*\phi_{\mathfrak{B}/\mathfrak{M}}$ are isomorphisms and $h^{-1}$ of the same is a surjection (see \cite[Ex~8.12]{AP19} or \cite{webb-thesis}). Then because $h^{i}(L\mu^*\EE_{\mathfrak{B}/\mathfrak{M}})=0$ for $i<0$ it follows that $\mathfrak{B}\rightarrow \mathfrak{M}$ is smooth and $\phi_{\mathfrak{B}/\mathfrak{M}}$ is a quasi-isomorphism.
Then argument of \cite[Prop~3]{KKP03} shows that $\phi_{\Mbar/\Mgnt}$ and $\phi_{\Mbar/\mathfrak{B}}$ induce the same virtual class on $\Mbar$.

We have proved the following lemma.

\begin{lemma}\label{lem:quasimapcompare}
The stack of $\epsilon$-stable quasimaps $\Mbar_{g,n}^\epsilon([Y/G], \beta)$ is canonically isomorphic to an open substack of $\Sec{\Ct\times[Y/G]}{\Ct}$. Moreover, the restriction of \eqref{eq:candidatepot} to this substack is a perfect obstruction theory, and it induces the same virtual fundamental class as the perfect obstruction theory of \cite[4.4.1]{CKM} and \cite[Sec~2.4.5]{CCK15}.
\end{lemma}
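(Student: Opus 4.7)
The plan proceeds in two parts, corresponding to the two claims in the lemma.

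For the identification of moduli spaces, I would apply Lemma~\ref{lem:basechange} to the tower $\Ct\times[Y/G] \to \Ct \times BG \to \Ct \to \Mgnt$. The moduli $\mathfrak{B} = \Sec{\Ct \times BG}{\Ct}$ carries a universal principal bundle $\mathfrak{P} \to \Ct_\mathfrak{B}$, and the fiber product computing the universal section's target pulls back $\Ct \times [Y/G]$ to exactly $\mathfrak{P}\times_G Y$ over $\Ct_\mathfrak{B}$. Lemma~\ref{lem:basechange} then gives a canonical equivalence $\Sec{\Ct \times [Y/G]}{\Ct} \cong \Sec{\mathfrak{P}\times_G Y}{\Ct_\mathfrak{B}}$ of stacks over $\Mgnt$. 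Since \cite{CCK15} defines $\Mbar_{g,n}^\epsilon([Y/G],\beta)$ as an open substack of the right-hand side, this equivalence realizes it as an open substack of the left-hand side.

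For the comparison of perfect obstruction theories, I would work with the tower $\Mbar \to \mathfrak{B} \to \Mgnt$ and apply Lemma~\ref{lem:functoriality1} to obtain a morphism of distinguished triangles connecting the three candidate theories $\phi_{\mathfrak{B}/\Mgnt}$, $\phi_{\Mbar/\Mgnt}$ and $\phi_{\Mbar/\mathfrak{B}}$. The third column, thanks to Remark~\ref{rmk:equiv} and the flatness of $\Ct \times BG \to \Ct$, can be rewritten using the universal principal bundle $\mathfrak{P}$, which matches the relative obstruction theory for quasimaps constructed in \cite[Sec~2.4.5]{CCK15}. The key input is that $\mathfrak{B} \to \Mgnt$ is smooth and $\phi_{\mathfrak{B}/\Mgnt}$ is a quasi-isomorphism; this is standard, and follows either from \cite[Ex~8.12]{AP19} or \cite{webb-thesis}: since $\phi_{\mathfrak{B}/\Mgnt}$ is always an obstruction theory and $L\mu^*\EE_{\mathfrak{B}/\Mgnt}$ has no cohomology in negative degrees, the obstruction theory axioms force a quasi-isomorphism.

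Given the morphism of distinguished triangles with the left column a quasi-isomorphism, the five-lemma yields that $\phi_{\Mbar/\Mgnt}$ and $\phi_{\Mbar/\mathfrak{B}}$ are both perfect obstruction theories on $\Mbar$ relative to their respective bases, and the triangles exhibit them as a compatible pair in the sense of \cite{KKP03}. The argument of \cite[Prop~3]{KKP03} then concludes that the induced virtual fundamental classes on $\Mbar$ coincide. The main obstacle I anticipate is the bookkeeping required in the third step to match the complex $\EE_{\Mbar/\mathfrak{B}}$ produced by \eqref{eq:candidatepot} with the explicit $\mathfrak{P}\times_G Y$--description used in \cite{CCK15}; Remark~\ref{rmk:equiv} and the flatness observation should handle this cleanly, but one must be careful that the identifications from Lemma~\ref{lem:basechange} are compatible with the construction of the adjunction-like morphism $a$ of Lemma~\ref{lem:technical}.
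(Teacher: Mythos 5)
Your proposal is correct and follows essentially the same route as the paper: the identification of moduli via Lemma~\ref{lem:basechange} applied to $\Ct\times[Y/G]\to\Ct\times BG\to\Ct$, the morphism of distinguished triangles from Lemma~\ref{lem:functoriality1} together with Remark~\ref{rmk:equiv}, the smoothness of $\mathfrak{B}\to\Mgnt$ forcing $\phi_{\mathfrak{B}/\Mgnt}$ to be a quasi-isomorphism, and the conclusion via \cite[Prop~3]{KKP03}. The bookkeeping you flag at the end is exactly what Remark~\ref{rmk:equiv} is designed to absorb, so there is no gap.
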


\subsection{Equivariance}
Let $G$ be a flat, separated group scheme, finitely presented over $\CC$. 
In this section we use the definitions of $G$-stacks
and equivariant morphisms in \cite{Ro05}.
If $Z$ is an algebraic stack with $G$-action, we say that a complex (resp. diagram of complexes) in $\Dqc(Z)$ \textit{is G-equivariant} if it is isomorphic to the pullback of a complex (resp. diagram of complexes) in $\Dqc([Z/G])$. 

\begin{lemma}\label{lem:space-action}
Suppose we are given $G$-equivariant stacks and morphisms $Z \rightarrow W \rightarrow \C \rightarrow \fU$. If $B_Z = \Sec{Z}{\C}$ and $B_W = \Sec{W}{\C}$ in \eqref{eq:ot-diagram}, then there are $G$-actions on all the stacks in \eqref{eq:ot-diagram} such that the entire diagram is equivariant.
\end{lemma}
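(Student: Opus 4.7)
The strategy is to package the equivariance data via quotient stacks and then transport the actions to $B_Z$, $B_W$, $K_Z$, and $K_W$ through pullback. Recall that giving a $G$-action on an algebraic stack $X$ compatible with one on $\fU$ is equivalent to giving a morphism $[X/G] \to [\fU/G]$ together with a 2-Cartesian square exhibiting $X \to \fU$ as the base change of $[X/G] \to [\fU/G]$ along the $G$-torsor $\fU \to [\fU/G]$. The hypothesized equivariance of the tower $Z \to W \to \C \to \fU$ thus packages as a tower
\begin{equation*}
[Z/G] \to [W/G] \to [\C/G] \to [\fU/G]
\end{equation*}
of quotient stacks whose pullback along $\fU \to [\fU/G]$ recovers the original tower.

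To construct the actions on $B_Z$ and $B_W$, I would form the moduli of sections $\Sec{[Z/G]}{[\C/G]}$ and $\Sec{[W/G]}{[\C/G]}$ over $[\fU/G]$ using the functor \eqref{eq:defsec} (the construction is purely functorial and requires no algebracity input). The formal calculation in the proof of Lemma \ref{lem:basechange2}, applied to the fiber diagram
\begin{equation*}
  \begin{tikzcd}
    Z \arrow[r] \arrow[d] & [Z/G] \arrow[d] \\
    \C \arrow[r] \arrow[d] & [\C/G] \arrow[d] \\
    \fU \arrow[r] & [\fU/G]
  \end{tikzcd}
\end{equation*}
and its $W$-analogue, yields canonical isomorphisms
\begin{equation*}
B_Z \cong \Sec{[Z/G]}{[\C/G]} \times_{[\fU/G]} \fU, \qquad B_W \cong \Sec{[W/G]}{[\C/G]} \times_{[\fU/G]} \fU.
\end{equation*}
Since $\fU \to [\fU/G]$ is a $G$-torsor, so are the projections from $B_Z$ and $B_W$ onto their respective quotients, and this equips $B_Z$ and $B_W$ with canonical $G$-actions compatible with the projection to $\fU$. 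The stacks $K_Z$ and $K_W$ are then realized as the pullbacks along $\fU \to [\fU/G]$ of $[\C/G] \times_{[\fU/G]} \Sec{[Z/G]}{[\C/G]}$ and its $W$-analogue, which endows them with diagonal $G$-actions.

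With this packaging, every arrow in \eqref{eq:ot-diagram} is the pullback along $\fU \to [\fU/G]$ of a morphism of stacks over $[\fU/G]$; for instance, the universal sections $K_Z \to Z$ and $K_W \to W$ are pulled back from the universal sections of the corresponding quotient-stack moduli. Pullbacks of morphisms along the $G$-torsor $\fU \to [\fU/G]$ are automatically $G$-equivariant, so the entire diagram is equivariant as required.

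The main technical subtlety is to verify that Lemma \ref{lem:basechange2} applies even though $[\C/G] \to [\fU/G]$ is not literally a flat family of twisted curves in the strict sense of \cite{AV02}. This is harmless, however: the proof of Lemma \ref{lem:basechange2} is entirely formal in universal properties of fiber products and $\Hom$-stacks and never uses the twisted-curve structure, so it carries over verbatim to the quotient-stack context considered here.
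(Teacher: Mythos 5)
Your proof is correct, but it takes a genuinely different route from the paper's. The paper's argument is a two-line application of the universal property of the moduli of sections: multiplying diagram \eqref{eq:ot-diagram} by $G$ and composing with the given actions on $Z \to \C \to \fU$ produces a $G\times\Sec{Z}{\C}$-family of sections, which by the universal property factors canonically through $\Sec{Z}{\C}$; that factorization \emph{is} the action map, and the compatibilities of the diagram come along for free. You instead work on the quotient side: you form $\Sec{[Z/G]}{[\C/G]}$ over $[\fU/G]$ and invoke Lemma \ref{lem:basechange2} for the fppf cover $\fU \to [\fU/G]$ to exhibit $B_Z$ as a $G$-torsor over it, then read off the action by descent. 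This is precisely the content of the paper's subsequent Lemma \ref{lem:equ-space}, which the paper proves \emph{after} (and using) Lemma \ref{lem:space-action}; your argument avoids any circularity because you never need a pre-existing action on $\Sec{Z}{\C}$ to form $\Sec{[Z/G]}{[\C/G]}$, and it delivers the identification $[\Sec{Z}{\C}/G] \simeq \Sec{[Z/G]}{[\C/G]}$ as a byproduct. The trade-off: the paper's route is more elementary and needs no discussion of whether $[\C/G] \to [\fU/G]$ qualifies as a family of twisted curves or whether $\Sec{[Z/G]}{[\C/G]}$ is algebraic, whereas your route buys the stronger statement at the cost of exactly those checks. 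You correctly flag that the proof of Lemma \ref{lem:basechange2} is formal in fiber products and Hom-groupoids and so applies verbatim; to be fully airtight you should also note that algebraicity of $\Sec{[Z/G]}{[\C/G]}$ is not needed to define the fiber product (it suffices that it is a stack), and can afterwards be deduced from the isomorphism with $[\Sec{Z}{\C}/G]$, together with a citation for the equivalence between $G$-actions on $X$ lifting that on $\fU$ and descent data along the torsor $\fU \to [\fU/G]$ (e.g.\ \cite{Ro05}).
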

\begin{proof}
To simplify the notation we take $Z=W$ and $B_Z=B_W$; the proof in the general case is similar.
We have a commuting diagram
\[
\begin{tikzcd}
&G\times Z\arrow[r] \arrow[d]& Z \arrow[d]\\
G\times \C_{\Sec{Z}{\C}} \arrow[r] \arrow[d] \arrow[ru]&G\times \C\arrow[r]\arrow[d]& \C\arrow[d]\\
G\times \Sec{Z}{\C} \arrow[r] &G\times \mathfrak{U} \arrow[r]& \mathfrak{U}
\end{tikzcd}
\]
where the left part of the diagram is just the product of $G$ with \eqref{eq:ot-diagram}, and the right part is the equivariance of the right column of \eqref{eq:ot-diagram}---in particular, the horizontal arrows are all action by $G$. By the universal property of $\Sec{Z}{\C}$, this diagram factors canonically through the original one \eqref{eq:ot-diagram}. This is the desired equivariance.  
\end{proof}

\begin{lemma}\label{lem:equ-space}
Suppose we are given equivariant stacks and morphisms $Z \rightarrow \C \rightarrow \fU$. There is a natural isomorphism 
\[[\Sec{Z}{\C}/G]\simeq \Sec{[Z/G]}{[\C/G]}\]
where the latter moduli space is constructed using the family $[\C/G] \rightarrow [\fU/G]$. 
\end{lemma}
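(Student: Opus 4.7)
The plan is to compare the $T$-points of the two stacks via the standard dictionary between maps to quotient stacks and equivariant data on $G$-torsors, and then observe that both descriptions produce the same groupoid.

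First I would unpack the right-hand side. A $T$-point of $\Sec{[Z/G]}{[\C/G]}$ over $[\fU/G]$ consists of a map $T \to [\fU/G]$ together with a section of $[Z/G]$ over $[\C/G] \times_{[\fU/G]} T$. By the universal property of the quotient stack, a map $T \to [\fU/G]$ is the same as a $G$-torsor $\pi_P\colon P \to T$ together with a $G$-equivariant morphism $P \to \fU$. Since $\C \to \fU$ is $G$-equivariant, the fiber product identifies as $[\C/G] \times_{[\fU/G]} T \cong [(\C \times_\fU P)/G]$, where $G$ acts diagonally. By descent, a morphism from this quotient to $[Z/G]$ over $[\C/G]$ is equivalent to a $G$-equivariant morphism $\C \times_\fU P \to Z$ over $\C$. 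In summary, a $T$-point of $\Sec{[Z/G]}{[\C/G]}$ is a pair $(P, s)$ where $P \to T$ is a $G$-torsor equipped with an equivariant map $P \to \fU$, and $s\colon \C \times_\fU P \to Z$ is an equivariant morphism over $\C$.

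Next I would unpack the left-hand side. Using the $G$-action on $\Sec{Z}{\C}$ provided by Lemma \ref{lem:space-action}, a $T$-point of $[\Sec{Z}{\C}/G]$ is a $G$-torsor $P \to T$ together with an equivariant map $P \to \Sec{Z}{\C}$ over $[\fU/G]$. By the defining universal property \eqref{eq:defsec}, such an equivariant map is the data of an equivariant map $P \to \fU$ together with a section $\tilde s\colon \C \times_\fU P \to Z$ over $\C$. The equivariance of $P \to \Sec{Z}{\C}$, combined with the construction of the $G$-action on $\Sec{Z}{\C}$ in the proof of Lemma \ref{lem:space-action} (where the action is induced from the diagonal $G$-action on $\C \times_\fU P \to \C$ and on $Z$), is precisely the condition that $\tilde s$ is $G$-equivariant. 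Thus a $T$-point of $[\Sec{Z}{\C}/G]$ is again a pair $(P, \tilde s)$ of the same type.

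Finally I would check that these identifications are functorial in $T$ and compatible with morphisms in the respective groupoids; this is a direct comparison, since in both descriptions a morphism is a map of $G$-torsors over $T$ that intertwines the equivariant sections to $Z$. The natural functor sending $(P, s)$ on one side to $(P, s)$ on the other side is thus an equivalence of categories fibered over $[\fU/G]$, giving the desired isomorphism. The main bookkeeping obstacle is to make the compatibility of the two $G$-actions (on $\Sec{Z}{\C}$ and on the section $\tilde s$) fully explicit; this is essentially done by tracing through the construction in Lemma \ref{lem:space-action}, where the action on $\Sec{Z}{\C}$ is defined precisely so that the universal section is equivariant, which is what makes the dictionary work.
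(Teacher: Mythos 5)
Your proposal is correct in substance, but it takes a genuinely different route from the paper. You argue by directly comparing groupoids of $T$-points: on both sides a $T$-point unwinds, via the torsor dictionary for maps into quotient stacks, to a $G$-torsor $P \to T$ with an equivariant map $P \to \fU$ together with a $G$-equivariant section $\C \times_\fU P \to Z$ over $\C$. The paper instead avoids unpacking points altogether: it observes that the three quotient maps $Z \to [Z/G]$, $\C \to [\C/G]$, $\fU \to [\fU/G]$ form fibered squares with fppf-cover horizontals, so Lemma \ref{lem:basechange2} identifies $\Sec{Z}{\C}$ with the base change of $\Sec{[Z/G]}{[\C/G]}$ along $\fU \to [\fU/G]$; since that base change is a $G$-torsor, the induced map $[\Sec{Z}{\C}/G] \to \Sec{[Z/G]}{[\C/G]}$ is an isomorphism by descent. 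The paper's route buys economy and rigor for free, reusing machinery already established and sidestepping the delicate point in your argument, namely that ``equivariant morphism'' in the sense of \cite{Ro05} carries $2$-morphism data (a $2$-cell witnessing compatibility with the action plus a cocycle condition), so your claim that equivariance of $P \to \Sec{Z}{\C}$ is \emph{precisely} the equivariance of the section $\C \times_\fU P \to Z$ requires tracking that coherence data through Lemma \ref{lem:space-action}. Your route, in exchange, makes the universal objects completely explicit, which could be useful elsewhere, but to be fully rigorous it needs the $2$-categorical bookkeeping you flag at the end spelled out rather than deferred.
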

\begin{proof}
We have a fiber diagram
\[
\begin{tikzcd}[row sep=9pt]
Z \arrow[r] \arrow[d] & {[Z/G]} \arrow[d]\\
\C \arrow[r] \arrow[d] & {[\C/G]} \arrow[d]\\
\mathfrak{U} \arrow[r] & {[\mathfrak{U}/G]}
\end{tikzcd}
\]
where the horizontal maps are fppf covers (see \cite[Theorem~4.1]{Ro05}). Hence by Lemma \ref{lem:basechange2}
we have a fiber square as in the following diagram.
\begin{equation}\label{eq:equivariance}
\begin{tikzcd}
&{[\Sec{Z}{\C}/G]}\arrow[d, "F"]\\
\Sec{Z}{\C} \arrow[r, "\rho_{S}"] \arrow[ur]\arrow[d, "q"] & \Sec{[Z/G]}{[\C/G]} \arrow[d, "p"] \\
\mathfrak{U} \arrow[r, "\rho_\mathfrak{U}"] & {[\mathfrak{U}/G]}
\end{tikzcd}
\end{equation}
 The map $\rho_S$ is equivariant and hence factors as depicted. In fact, the outer trapezoid is also fibered, since it is a commuting diagram of $G$-torsors. Again, both horizontal maps and the diagonal map are fppf covers, so by descent the map labeled $F$ is an isomorphism. 
\end{proof}

\begin{lemma}\label{lem:equ-morphs}Suppose we are given a diagram \eqref{eq:ot-diagram} consisting of $G$-equivariant stacks and morphisms. Then the corresponding morphism $\phi_{B_Z/B_W}$ is canonically $G$-equivariant. Moreover, the conclusions of Lemmas \ref{lem:functoriality1} and \ref{lem:functoriality2} hold $G$-equivariantly.
\end{lemma}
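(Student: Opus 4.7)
The plan is to run the construction of \eqref{eq:generalpot} directly on the quotient by $G$, and then to identify $\phi_{B_Z/B_W}$ with the pullback of the resulting morphism along the $G$-torsor $B_Z \to [B_Z/G]$. By Lemma \ref{lem:space-action} applied to each stratum, the whole diagram \eqref{eq:ot-diagram} is $G$-equivariant, and since quotients by $G$ commute with fiber products of $G$-stacks, the quotient diagram
\begin{equation*}
\begin{tikzcd}
& {[Z/G]} \arrow[r] & {[W/G]} \arrow[d] \\
{[K_Z/G]} \arrow[d] \arrow[ur] \arrow[r] & {[K_W/G]} \arrow[r] \arrow[d] \arrow[ur] & {[\C/G]} \arrow[d] \\
{[B_Z/G]} \arrow[r] & {[B_W/G]} \arrow[r] & {[\fU/G]}
\end{tikzcd}
\end{equation*}
has fibered bottom row, so the construction \eqref{eq:generalpot} applies and produces a morphism $\phi_{[B_Z/G]/[B_W/G]}$.

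The main step is to exhibit a canonical isomorphism between $\phi_{B_Z/B_W}$ and the pullback of $\phi_{[B_Z/G]/[B_W/G]}$ along the torsor map $B_Z \to [B_Z/G]$. The construction of $\phi_{B_Z/B_W}$ has two stages: first one writes the morphism \eqref{eq:build1} in $\Dqc(K_Z)$ as a composition of canonical maps of cotangent complexes, and then one applies the adjunction-like operation $a$. The first stage is compatible with arbitrary pullback by the standard functoriality of cotangent complexes (\cite[Lem~2.2.12]{webb-thesis}), and the second stage is compatible with pullback by Lemma \ref{lem:technical}. The cocycle condition in that lemma, combined with the cocycle for the $G$-action on $B_Z$, makes the identification $\phi_{B_Z/B_W} \cong \rho^*\phi_{[B_Z/G]/[B_W/G]}$ into a canonical $G$-equivariant isomorphism in the sense of the conventions preceding Lemma \ref{lem:space-action}.

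For the conclusions of Lemmas \ref{lem:functoriality1} and \ref{lem:functoriality2}, the very same strategy applies. The morphisms of distinguished triangles \eqref{eq:a.2.2-1} and \eqref{eq:functoriality2} are obtained by applying $a$ termwise to a commuting diagram of canonical morphisms of cotangent complexes (as in \eqref{eq:a.2.2}); each of these ingredients is again compatible with pullback and descends to the quotient diagram, which exists by the equivariance assumptions and Lemma \ref{lem:space-action}. Performing the functoriality constructions on the quotient and pulling back therefore recovers \eqref{eq:a.2.2-1} and \eqref{eq:functoriality2} $G$-equivariantly.

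The main obstacle is purely bookkeeping: to upgrade the isomorphism $\phi_{B_Z/B_W} \cong \rho^*\phi_{[B_Z/G]/[B_W/G]}$ from a mere 1-isomorphism of arrows in $\Dqc(B_Z)$ to genuine equivariance data, one must check compatibility with all iterated pullbacks along the simplicial nerve of the $G$-action on $B_Z$. This is exactly the cocycle condition $\fa_{\mu\circ\nu}=\fa_{\nu}\circ\nu_K^*\fa_{\mu}$ supplied by Lemma \ref{lem:technical}, together with the analogous cocycle for the cotangent complex construction. Since no new input is required beyond these two compatibilities, the argument reduces to a diagrammatic verification.
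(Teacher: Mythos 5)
Your proposal is correct and follows essentially the same route as the paper: form the quotient diagram over $[\fU/G]$ (whose bottom row is fibered since the quotient maps are $G$-torsors, cf.\ Lemma \ref{lem:basechange2} and Lemma \ref{lem:equ-space}), apply the construction there, and identify $\phi_{B_Z/B_W}$ with the pullback along $B_Z \to [B_Z/G]$ using the base-change compatibility of the cotangent-complex maps and of $a$ (Lemma \ref{lem:technical}), with the cocycle condition on $\fa_\mu$ handling the remaining commutativities. The only cosmetic difference is that the paper packages the first claim as a direct application of Lemma \ref{lem:functoriality2} (flat horizontal arrows in the two fibered squares) rather than re-deriving it from the ingredients, and its notion of equivariance only requires an isomorphism with a pullback from the quotient, so the full simplicial bookkeeping you describe is not needed.
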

\begin{proof}
For the first claim, observe that there are two fibered squares:
\[
\begin{tikzcd}
\arrow[d]Z \arrow[r ] & {[Z/G]} \arrow[d] &&& B_Z \arrow[r, "p"] \arrow[d] & {[B_Z/G]} \arrow[d]\\
W \arrow[r] & {[W/G]} &&& B_W \arrow[r]& {[B_W/G]}
\end{tikzcd}
\]
where the square on the left consists of stacks over $[\C/G]$ and the square on the right consists of stacks over $[\fU/G]$ (that the square on the right is fibered follows from Lemma \ref{lem:basechange2}). The horizontal maps are flat, so Lemma \ref{lem:functoriality2} yields a commuting square
\[
\begin{tikzcd}
p^*\EE_{[B_Z/G]/[B_W/G]} \arrow[r] \arrow[d, "p^*\phi_{[B_Z/G]/[B_W/G]}"] & \EE_{B_Z/B_W}\arrow[d, "\phi_{B_Z/B_W}"]\\
p^*\LL_{[B_Z/G]/[B_W/G]} \arrow[r] & \LL_{B_Z/B_W}
\end{tikzcd}
\]
which is the desired equivariance of $\phi_{B_Z/B_W}$.

We prove the equivariant version of Lemma \ref{lem:functoriality1} (the proof of Lemma \ref{lem:functoriality2} is similar). Let $p: B_Z \rightarrow [B_Z/G]$ be the quotient map. Then $p$ induces a morphism to \eqref{eq:a.2.2} from the following morphism of distinguished triangles:
\[
\begin{tikzcd}
p^*Lf^*Lg^*\LL_{[W/G]/[V/G]} \arrow[d]\arrow[r] & p^*Lf^*\LL_{[Z/G]/[V/G]} \arrow[r] \arrow[d]& p^*Lf^*\LL_{[Z/G]/[W/G]} \arrow[r] \arrow[d]& {}\\
p^*\pi^*\mu_B^*\LL_{[B_W/G]/[B_V/G]} \arrow[r] & p^*\pi^*\LL_{[B_Z/G]/[B_V/G]} \arrow[r] & p^*\pi^*\LL_{[B_Z/G]/[B_W/G]} \arrow[r] & {}
\end{tikzcd}
\]
where we have used $f$ and $g$ for the analogous maps of quotient stacks. In other words, we consider a commuting diagram that includes four distinguished triangles as the edges of a rectangular prism. The rest of the argument is as in the proof of Lemma \ref{lem:functoriality1}, except that to conclude the final diagram commutes we also use the cocycle condition on $\fa_\mu$ proved in Lemma \ref{lem:technical}. 
\end{proof}

Consider a diagram \eqref{eq:ot-diagram} and define $\mu$ to be the map $B_Z\rightarrow B_W$ given there. We define an \textit{absolute} version of $\phi_{B_Z/B_W}$ to be a (shifted) mapping cone fitting in the following diagram, where the arrow labeled ``$F$'' is defined to make its square commutative:
\begin{equation}\label{eq:absolutepot}
\begin{tikzcd}
\EE^{abs}_{B_Z/B_W} \arrow[r] \arrow[d, "\phi^{abs}_{B_Z/B_W}"] & \EE_{B_Z/B_W} \arrow[r, "F"] \arrow[d, "\phi_{B_Z/B_W}"] & L\mu^*\LL_{B_W}[1] \arrow[r]\arrow[d, equal] & {}\\
\LL_{B_W} \arrow[r] & \LL_{B_Z/B_W} \arrow[r] & L\mu^*\LL_{B_W}[1] \arrow[r] & {}
\end{tikzcd}
\end{equation}

\begin{lemma}
  \label{lem:eqabsolutepot}
The morphism $\phi^{abs}_{B_Z/B_W}\colon \EE^{abs}_{B_Z/B_W} \rightarrow \LL_{B_Z}$ is naturally equivariant.
\end{lemma}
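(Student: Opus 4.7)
The plan is to realize $\phi^{abs}_{B_Z/B_W}$ as the pullback along the smooth quotient map $p\colon B_Z \to [B_Z/G]$ of the analogous absolute obstruction theory built on the quotient stacks. By Lemma \ref{lem:equ-morphs}, the relative obstruction theory $\phi_{B_Z/B_W}$ already lifts to an equivariant morphism $\phi_{[B_Z/G]/[B_W/G]}\colon \EE_{[B_Z/G]/[B_W/G]} \to \LL_{[B_Z/G]/[B_W/G]}$ whose pullback along $p$ is $\phi_{B_Z/B_W}$; the strategy is to run the construction of \eqref{eq:absolutepot} entirely on $[B_Z/G] \to [B_W/G]$ relative to the classifying stack $BG$, and then pull back.

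First I would form the rotated relative cotangent complex triangle for the descent $\bar\mu\colon [B_Z/G] \to [B_W/G]$ of $\mu$ over $BG$,
\[
\LL_{[B_Z/G]/BG} \to \LL_{[B_Z/G]/[B_W/G]} \to L\bar\mu^*\LL_{[B_W/G]/BG}[1] \to,
\]
and compose $\phi_{[B_Z/G]/[B_W/G]}$ with its second arrow to produce the analog $\bar F$ of the map $F$. A shifted mapping cone of $\bar F$ defines $\EE^{abs}_{[B_Z/G]/[B_W/G]}$, and the mapping cone axiom supplies an induced morphism $\phi^{abs}_{[B_Z/G]/[B_W/G]}\colon \EE^{abs}_{[B_Z/G]/[B_W/G]} \to \LL_{[B_Z/G]/BG}$ fitting into the quotient analog of diagram \eqref{eq:absolutepot}.

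Next I would pull this entire diagram back along $p$. In characteristic zero every finite type group scheme is smooth, so $p$ is a smooth $G$-torsor and $Lp^*$ is exact and preserves distinguished triangles. Flat base change for the cotangent complex, applied to the Cartesian square
\[
\begin{tikzcd}
B_Z \arrow[r] \arrow[d, "p"] & \Spec\CC \arrow[d] \\
{[B_Z/G]} \arrow[r] & BG,
\end{tikzcd}
\]
gives canonical isomorphisms $Lp^*\LL_{[B_Z/G]/BG} \cong \LL_{B_Z}$, $Lp^*L\bar\mu^*\LL_{[B_W/G]/BG} \cong L\mu^*\LL_{B_W}$, and $Lp^*\LL_{[B_Z/G]/[B_W/G]} \cong \LL_{B_Z/B_W}$. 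Combined with the equivariance $Lp^*\phi_{[B_Z/G]/[B_W/G]} \cong \phi_{B_Z/B_W}$ from Lemma \ref{lem:equ-morphs}, these identifications match the pulled-back diagram with \eqref{eq:absolutepot}. Declaring $\EE^{abs}_{B_Z/B_W} := Lp^*\EE^{abs}_{[B_Z/G]/[B_W/G]}$ and $\phi^{abs}_{B_Z/B_W} := Lp^*\phi^{abs}_{[B_Z/G]/[B_W/G]}$ then exhibits the required equivariant lift.

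The hard part is controlling the non-canonicity of mapping cones in the derived category: a priori the object $\EE^{abs}_{B_Z/B_W}$ and the morphism $\phi^{abs}_{B_Z/B_W}$ in \eqref{eq:absolutepot} are only determined up to a non-unique isomorphism by the mapping cone axiom, so equivariance must be read as the existence of a compatible choice rather than as a property of some a priori fixed object. This is handled by committing to the cone choice once on the quotient stack and defining the cone on $B_Z$ as its $Lp^*$-pullback.
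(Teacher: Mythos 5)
Your proposal is correct and follows essentially the same strategy as the paper: both resolve the non-canonicity of the mapping cone by constructing it once on the quotient stacks and declaring $\EE^{abs}_{B_Z/B_W}$ together with $\phi^{abs}_{B_Z/B_W}$ to be its pullback along $p\colon B_Z \to [B_Z/G]$, using flat base change to identify the pulled-back diagram with \eqref{eq:absolutepot}. The only (immaterial) difference is that the paper forms the cone at the level of complexes on the universal curve, before applying the adjunction-like operation $a$, whereas you form it directly on the base after pushforward.
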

\begin{proof}
Define $E$ to be a (shifted) mapping cone fitting into the following morphism of distinguished triangles:
\begin{equation}\label{eq:apot1}
\begin{tikzcd}
E \arrow[d]\arrow[r] & Lf^*\LL_{[Z/G]/[W/G]} \arrow[r] \arrow[d]& \pi^*L\mu^*L_{[B_W/G]/[\bullet/G]}{[1]} \arrow[r] \arrow[d]& {}\\
\pi^*\LL_{[B_Z/G]/[\bullet/G]} \arrow[r] & \pi^*\LL_{[B_Z/G]/[B_W/G]} \arrow[r] & \pi^*L\mu^*L_{[B_W/G]/[\bullet/G]}{[1]} \arrow[r] & {}
\end{tikzcd}
\end{equation}
Here $\bullet=\Spec(\CC)$ with the trivial $G$-action. We likewise have a diagram
\begin{equation}\label{eq:apot2}
\begin{tikzcd}
 & Lf^*\LL_{Z/W} \arrow[r] \arrow[d]& \pi^*L\mu^*L_{B_W}{[1]}  \arrow[d]& {}\\
\pi^*\LL_{B_Z} \arrow[r] & \pi^*\LL_{B_Z/B_W} \arrow[r] & \pi^*L\mu^*L_{B_W}{[1]} \arrow[r] & {}
\end{tikzcd}
\end{equation}
 Let $p: B_Z \rightarrow [B_Z/G]$ denote the qutoient map. Then $p$ induces an \textit{isomorphism} from (the relevant portion of) $p^*$ applied to \eqref{eq:apot1}, to the diagram \eqref{eq:apot2}. We define the missing mapping cone in \eqref{eq:apot2} to be $p^*E$, together with the constructed (iso)morphisms. Now apply the adjunction-like morphism $a$ and conclude as in the proof of Lemmas \ref{lem:functoriality1} and \ref{lem:equ-morphs}.
\end{proof}

Let $T=G$ be a torus and suppose we have a diagram \eqref{eq:ot-diagram} where all stacks and morphisms are $T$-equivariant. Suppose moreover that $B_Z$ and $B_W$ are Deligne-Mumford, that $T$ acts trivially on $B_W$, and we let $F \rightarrow B_Z$ denote the $T$-fixed locus. If $B_W=\Spec(\CC)$ and $\phi_{B_Z/B_W}$ is a perfect obstruction theory, it is shown in \cite{CKL17} that the following composition is a perfect obstruction theory for $F$:
\begin{equation}\label{eq:ckl-pot}
\EE_{B_Z/B_W}|_F^{\fix} \xrightarrow{\phi_{B_Z/B_W}|^{\fix}_F} \LL_{B_Z/B_W}|_F \rightarrow \LL_{F/B_W}.
\end{equation}
On the other hand, we may construct the morphism $\phi_{F/B_W}$ by simply replacing $B_Z$ with $F$ in \eqref{eq:ot-diagram}.

\begin{lemma}
The morphism \eqref{eq:ckl-pot} is isomorphic to the fixed part of $\phi_{F/B_W}$; that is, there is a commuting square
\[
\begin{tikzcd}
\EE_{B_Z/B_W}|_F^{\fix} \arrow[d, "\phi_{B_Z/B_W}|^{\fix}_F"] \arrow[r, "\sim"] & \EE_{F/B_W}^{\fix} \arrow[d, "\phi_{F/B_W}^{\fix}"]\\
\LL_{B_Z/B_W}|_F \arrow[r] & \LL_{F/B_W}
\end{tikzcd}
\]
\end{lemma}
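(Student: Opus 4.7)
The plan is to apply the functoriality result of Lemma \ref{lem:functoriality2} to the inclusion $\iota\colon F \hookrightarrow B_Z$ of the fixed locus, enhance it equivariantly using Lemma \ref{lem:equ-morphs}, and then pass to $T$-fixed parts.

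First I would apply Lemma \ref{lem:functoriality2} with the identity (trivial) square on the $\C$-side, letting all four corners equal $Z$ (and the vertical arrows be the original $Z\to W$), and with the right-hand square
\[
\begin{tikzcd}
F \arrow[r, "\iota"] \arrow[d] & B_Z \arrow[d] \\
B_W \arrow[r, equal] & B_W
\end{tikzcd}
\]
on the $\fU$-side. The compatibility of universal sections is automatic: on $\C_F = \C\times_\fU F$ we take the restriction of the universal section on $\C_{B_Z}$. Since the $\C$-stack square is trivially fibered with identity vertical arrows (in particular flat), the top horizontal arrow in the conclusion of Lemma \ref{lem:functoriality2} is an isomorphism, giving a commuting square
\[
\begin{tikzcd}
L\iota^*\EE_{B_Z/B_W} \arrow[r, "\sim"] \arrow[d, "L\iota^*\phi_{B_Z/B_W}"'] & \EE_{F/B_W} \arrow[d, "\phi_{F/B_W}"] \\
L\iota^*\LL_{B_Z/B_W} \arrow[r] & \LL_{F/B_W}
\end{tikzcd}
\]
in which the bottom arrow is the canonical cotangent map.

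By Lemma \ref{lem:equ-morphs}, this square is $T$-equivariant. Since $T$ acts trivially on $F$, every complex in the diagram decomposes on $F$ into weight spaces, and since $T$ acts trivially on $B_W$ as well, the complex $\LL_{F/B_W}$ is concentrated in weight zero; hence the canonical map $L\iota^*\LL_{B_Z/B_W}\to\LL_{F/B_W}$ vanishes on the moving part and thus factors through the fixed part $(L\iota^*\LL_{B_Z/B_W})^{\fix}$. Passing to the weight-zero summand of the square (an exact operation that preserves the top isomorphism) yields an isomorphism $\EE_{B_Z/B_W}|_F^{\fix}\xrightarrow{\sim}\EE_{F/B_W}^{\fix}$ intertwining the composition in \eqref{eq:ckl-pot} with $\phi_{F/B_W}^{\fix}$, which is the required commuting diagram.

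The argument is largely bookkeeping once the functoriality and equivariance of $\phi$ are in place, so I do not anticipate any serious obstacle; the only point requiring care is the equivariant refinement of Lemma \ref{lem:functoriality2}, for which Lemma \ref{lem:equ-morphs} is precisely the tool needed. In particular, the fact that the isomorphism on top truly restricts to the fixed parts (rather than merely inducing a morphism of fixed parts from a non-equivariant quasi-isomorphism) relies on the quotient-stack construction in the proof of Lemma \ref{lem:equ-morphs}, which guarantees that the whole square descends through $B_Z\to [B_Z/T]$.
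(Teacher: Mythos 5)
Your proposal is correct and follows essentially the same route as the paper: the paper invokes Lemma \ref{lem:functoriality1} with the upstairs tower $Z \to Z \to W$ (so the third term of the triangle vanishes and the first map $\EE_{B_Z/B_W}|_F \to \EE_{F/B_W}$ is an isomorphism), which is exactly the degenerate situation your application of Lemma \ref{lem:functoriality2} with the trivial fibered square over $\C$ produces, and both arguments then pass to fixed parts via the equivariance supplied by Lemma \ref{lem:equ-morphs}. The only difference is which of the two interchangeable functoriality lemmas is cited.
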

\begin{proof}
Before applying the $\fix$ functor, this diagram is part of Lemma \ref{lem:functoriality1} with $Z,W$ (there) both equal to $Z$ (here), $V$ (there) equal to $W$ (here), $B_Z=F$, $B_W=B_Z$, and $B_V=B_W$ (continuing the same notational pattern). Note that in the notation of Lemma \ref{lem:functoriality1}, we have $\EE_{B_Z/B_W}=0$.
\end{proof}

\section{Absolute versus relative cosection localized virtual classes}
\label{sec:cosection-localization}
Let $Z$ be any smooth algebraic stack locally of finite type. Let $X\rightarrow Z$ be a morphism from a finite type, separated Deligne--Mumford stack $X$ with a relative perfect obstruction theory $\phi\colon \EE \rightarrow \LL_{X/Z}$. 
Let $\sigma_{X/Z}\colon \EE_{X/Z}^\vee \rightarrow \OO_X[-1]$ be a cosection (defined on all of $X$). Recall the functor $h^1/h^0(\bullet)$ from a certain subcategory of the derived category of $X$ to the category of abelian cone stacks on $X$ (see e.g. \cite[Prop~2.4]{BF97}). Applying this functor to $\sigma$ yields a map $\coneE_{X/Z}\rightarrow \CC_X$ of cone stacks on $X$, where we define $\coneE_{X/Z} =  h^1/h^0(\EE_{X/Z}) $ and $\CC_X = h^1/h^0(\OO_X[-1])$. We define the \textit{kernel} $\coneE_{X/Z}(\sigma)$ to be the fiber product
\[
\coneE_{X/Z}(\sigma) = \coneE_{X/Z}\times_{\CC_X, 0} X.
\]
(Note that the underlying set of $\coneE_{X/Z}(\sigma)$ is the locus $\mathbf{E}(\sigma)$ in \cite[(3.2)]{KiLi13}.) We do not require (as in \cite{KiLi13}) that $\sigma$ descends to an absolute cosection, but instead we directly assume that
\begin{equation}\label{eq:descends}
\text{the map of cone stacks}\;h^1/h^0(\sigma \circ \phi^\vee)\;\text{is zero.}
\end{equation}
In this case, by the universal property of $\coneE_{X/Z}(\sigma)$, the relative intrinsic normal sheaf is contained in $\coneE_{X/Z}(\sigma)$, and hence the relative intrinsic normal cone $\coneC_{X/Z}$ is as well. We define
\[
[X]^{\vir}_{\sigma_{X/Z}} := 0^!_{\sigma_{X/Z}}([\coneC_{X/Z}])\quad \text{in}\;A_*(X(\sigma))
\]
where $0^!_{\sigma_{X/Z}}$ is the localized Gysin map $s^!_{\coneE_{X/Z}, \sigma_{X/Z}}$ \cite[(3.5)]{KiLi13} and $X(\sigma)$ is the degeneracy locus.

The following proposition shows that in fact, $\sigma_{X/Z}$ defines a cosection $\sigma_X$ of an absolute obstruction theory $\EE_X$ induced by $\EE_{X/Z}$, that the absolute intrinsic normal cone $\coneC_X$ is contained in the absolute kernel $\coneE_X(\sigma)$, and finally that our definition of $[X]^\vir_\sigma$ agrees with that of \cite{KiLi13} (where $[X]^\vir_\sigma$ is defined to be $0^!_{\sigma_X}([\coneC_X])$. Our proposition is, however, more general than this and may be viewed as a cosection localized analog of \cite[Proposition~3]{KKP03}.

\begin{proposition}\label{prop:cos-loc-func}
  Fix $Z$, a smooth algebraic stack locally of finite type.
 Let $X$ be a Deligne--Mumford stack and let $X \rightarrow Y \rightarrow Z$ be morphisms such that $X \rightarrow Y$ and $X \rightarrow Z$ are separated and finite type, and $Y\rightarrow Z$ is smooth and locally finite type.
 Let $\phi\colon \EE_{X/Y} \rightarrow \LL_{X/Y}$ be a relative perfect obstruction theory and $\sigma\colon \EE_{X/Y}^\vee \rightarrow \OO[-1]$ a cosection such that $h^1/h^0(\sigma \circ \phi^\vee)=0$. Then there is a relative perfect obstruction theory $\psi\colon \FF_{X/Z} \rightarrow \LL_{X/Z}$ with induced cosection $\rho$ satisfying $h^1/h^0(\rho \circ \psi^\vee)=0$. Moreover, the degeneracy loci $X(\sigma)$ and $X(\rho)$ are equal, and the cosection localized virtual cycles induced by $(\phi, \sigma)$ and $(\psi, \rho)$ agree.
\end{proposition}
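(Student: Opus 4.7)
The plan is to construct $\psi$ and $\rho$ by standard triangulated-category manipulations, derive the compatibilities of cosections and degeneracy loci, and finally reduce the agreement of cosection-localized virtual cycles to a compatibility of cosection-localized Gysin maps under a smooth quotient of vector bundle stacks, in the spirit of \cite[Prop.~3]{KKP03} and \cite[Thm.~4.6]{CL18}.

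First, use the cotangent triangle $\pi^*\LL_{Y/Z} \to \LL_{X/Z} \to \LL_{X/Y} \to \pi^*\LL_{Y/Z}[1]$ and the composition $\EE \to \LL_{X/Y} \to \pi^*\LL_{Y/Z}[1]$ to define $\FF_{X/Z}$ as the fiber. This produces a triangle $\pi^*\LL_{Y/Z} \to \FF \to \EE \to \pi^*\LL_{Y/Z}[1]$ together with a morphism $\psi\colon \FF \to \LL_{X/Z}$ compatible with $\phi$ and the identity on $\pi^*\LL_{Y/Z}$. Since $Y \to Z$ is smooth, $\pi^*\LL_{Y/Z}$ is a vector bundle in degree zero, so $\FF$ is perfect in $[-1,0]$, and the five lemma applied to cohomology shows that $\psi$ is a POT.

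Next, dualize to get the triangle $\EE^\vee \to \FF^\vee \to \pi^*T_{Y/Z} \to \EE^\vee[1]$. The obstruction to lifting $\sigma$ to $\rho\colon \FF^\vee \to \OO[-1]$ lies in $\Hom(\pi^*T_{Y/Z}, \OO)$; a diagram chase using the morphism between the dualized POT and cotangent triangles identifies it with the composition $\pi^*T_{Y/Z} \to \LL_{X/Y}^\vee[1] \xrightarrow{(\sigma\phi^\vee)[1]} \OO$. Since $\Hom(B, \OO) = \Hom(h^0(B), \OO)$ for complexes $B$ concentrated in non-positive degrees, and since $h^1/h^0(\OO[-1])$ is simply the line bundle $\OO$, the hypothesis $h^1/h^0(\sigma \circ \phi^\vee) = 0$ is equivalent to $h^1(\sigma \circ \phi^\vee) = 0$, which forces this composition to vanish. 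Thus $\rho$ exists; it is unique because $\Hom(\pi^*T_{Y/Z}, \OO[-1]) = 0$ for degree reasons. The surjection $h^1(\EE^\vee) \twoheadrightarrow h^1(\FF^\vee)$ produced by $h^1(\pi^*T_{Y/Z}) = 0$, combined with the defining relation $\rho \circ (\EE^\vee \to \FF^\vee) = \sigma$, gives $\sigma^1 = \rho^1 \circ (\text{surj.})$ and hence $X(\sigma) = X(\rho)$; by functoriality, the analogous surjection $h^1(\LL_{X/Y}^\vee) \twoheadrightarrow h^1(\LL_{X/Z}^\vee)$ together with the induced relation $\rho \circ \psi^\vee \circ (\LL_{X/Y}^\vee \to \LL_{X/Z}^\vee) = \sigma \circ \phi^\vee$ gives $h^1(\rho \circ \psi^\vee) = 0$, i.e., $h^1/h^0(\rho \circ \psi^\vee) = 0$.

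The main obstacle is the final equality of cosection-localized virtual cycles. Working locally with global resolutions $\EE = [E^{-1} \to E^0]$ and $\FF = [E^{-1} \to E^0 \oplus V]$ (with $V = \pi^*\LL_{Y/Z}$), the cone stacks fit in a smooth morphism $q\colon \coneE_{X/Y} \to \coneE_{X/Z}$, where $\coneE_{X/Y} = [E_1/E_0]$ and $\coneE_{X/Z} = [E_1/(E_0 \oplus V^\vee)]$. By embedding $X$ locally into a smooth ambient scheme over $Y$, one verifies $\coneC_{X/Y} = q^{-1}(\coneC_{X/Z})$, and by construction the induced cosections satisfy $\rho^1 \circ q = \sigma^1$ as maps $\coneE_{X/Y} \to \OO$. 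The equality then reduces to the compatibility $0^!_\sigma \circ q^* = 0^!_\rho$ of cosection-localized Gysin maps under $q$. This is the key technical point; the non-localized analog is essentially \cite[Prop.~3]{KKP03}, and I would establish the cosection-localized upgrade by mirroring the argument of \cite[Thm.~4.6]{CL18}, which handles an analogous compatibility in the specialization setup via a careful analysis of the Kiem--Li localized blow-up. Applying this compatibility to $[\coneC_{X/Z}]$ then yields $[X]^\vir_\sigma = 0^!_\sigma([\coneC_{X/Y}]) = 0^!_\sigma(q^*[\coneC_{X/Z}]) = 0^!_\rho([\coneC_{X/Z}]) = [X]^\vir_\rho$.
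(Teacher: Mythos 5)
Your proposal is correct and follows essentially the same route as the paper: build $\FF_{X/Z}$ as the cone on $\EE_{X/Y}\to q^*\LL_{Y/Z}[1]$, lift the cosection using the vanishing of $h^1/h^0(\sigma\circ\phi^\vee)$, match degeneracy loci via the surjection on $h^1$, and reduce the equality of localized classes to the Cartesian square of intrinsic normal cones over the smooth surjection of kernel cone stacks together with the compatibility of the Kiem--Li localized Gysin maps with smooth pullback. The only quibble is your citation for that last compatibility: \cite[Thm~4.6]{CL18} concerns Gysin pullback along a regular embedding of the base rather than smooth pullback of bundle stacks; the fact you actually need is the smooth-pullback compatibility of the localized Gysin maps, which the paper likewise invokes as known rather than reproving.
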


\begin{proof}
Let $q$ be the map from $X \rightarrow Y$. The relative obstruction theory $\psi\colon \FF_{X/Z} \rightarrow \LL_{X/Z}$ is any morphism fitting into a morphism of distinguished triangles
\begin{equation}\label{eq:cos-loc-func1}
\begin{tikzcd}
\FF_{X/Z} \arrow[r] \arrow[d, "\psi"] & \EE_{X/Y} \arrow[r, "\delta"] \arrow[d, "\phi"] & q^*\LL_{Y/Z}[1] \arrow[r] \arrow[d] & {}\\
\LL_{X/Z} \arrow[r] & \LL_{X/Y} \arrow[r] & q^*\LL_{Y/Z}[1] \arrow[r] &{}
\end{tikzcd}
\end{equation}
By the proof of \cite[Proposition~3]{KKP03}, $\psi$ is indeed a perfect obstruction theory. Dualizing, we see that $\sigma \circ \delta^\vee$ factors through $\sigma \circ\phi^\vee$ and hence by assumption $h^1/h^0(\sigma \circ \delta^\vee)=0$. By \cite[Lem~2.2]{BF97}, the map $\sigma \circ \delta^\vee$ is nullhomotopic, and there is a (not necessarily unique) morphism $\rho$ inducing a morphism of distinguished triangles as below.
\begin{equation}\label{eq:cos-loc-func2}
\begin{tikzcd}
\OO_X[-1] & \OO_X[-1] \arrow[l] & 0 \arrow[l] & \arrow[l] {}\\
\FF_{X/Z}^\vee \arrow[u, dashrightarrow, "\rho"] & \EE_{X/Y}^\vee \arrow[l] \arrow[u, "\sigma"] & (q^*\LL_{Y/Z}[1])^\vee \arrow[u] \arrow[l, "\delta^\vee"'] & {}\arrow[l]
\end{tikzcd}
\end{equation}
By \cite[Proposition~2.7]{BF97}, from \eqref{eq:cos-loc-func1} and \eqref{eq:cos-loc-func2} we get a commuting diagram of abelian cone stacks whose rows are short exact sequences and $\mathfrak{K} = h^1/h^0((q^*\LL_{Y/Z}[1])^\vee)$:
\begin{equation}\label{eq:cos-loc-func3}
\begin{tikzcd}
0 & \arrow[l] \CC_X & \arrow[l, "\sim"] \CC_X & 0 \arrow[l] & \arrow[l] 0\\
0 & \arrow[l] \arrow[u, "\rho"]h^1/h^0(\FF_{X/Z}^\vee)& \arrow[l] \arrow[u, "\sigma"]h^1/h^0(\EE_{X/Y}^\vee) & \arrow[u] \mathfrak{K}\arrow[l] & \arrow[l] 0\\
0 & \arrow[l]\mathfrak{N}_{X/Z} \arrow[u, "\psi^\vee"]& \arrow[l,"\alpha"] \mathfrak{N}_{X/Y} \arrow[u, "\phi^\vee"]&  \arrow[l] \mathfrak{K}\arrow[u, equal]& \arrow[l] 0\\
\end{tikzcd}
\end{equation}
Here $\mathfrak{N}$ denotes an intrinsic normal sheaf. In particular the bottom left square is fibered and the horizontal arrows in that square are smooth surjections. Because the composition $\sigma \circ \phi^\vee$ is zero and $\alpha$ is surjective, we also have $\rho \circ \psi^\vee =0.$

On the one hand, by the proof of \cite[Proposition~3]{KKP03} we may replace the bottom row of \eqref{eq:cos-loc-func3} with the exact sequence $0 \leftarrow \mathfrak{C}_{X/Z} \leftarrow \mathfrak{C}_{X/Y} \leftarrow \mathfrak{K}\leftarrow 0$. On the other hand, we have a diagram where all squares are fibered and the arrow $h^1/h^0(\EE^\vee_{X/Y}) \rightarrow h^1/h^0(\FF^\vee_{X/Z})$ is a smooth surjection:
\[
\begin{tikzcd}
\mathfrak{K} \arrow[r] \arrow[d] & h^1/h^0(\EE_{X/Y}^\vee)(\sigma) \arrow[r] \arrow[d] & h^1/h^0(\EE_{X/Y}^\vee) \arrow[d] \\
X \arrow[r, "0"] &h^1/h^0(\FF^\vee_{X/Z})(\rho) \arrow[r] \arrow[d] & h^1/h^0(\FF_{X/Z}^\vee) \arrow[d, "\rho"] \\
&X \arrow[r, "0"] & \CC_X
\end{tikzcd}
\]
To identify the two leftmost terms in the top row, observe that the composition of the two right vertical arrows is $\sigma$ and the composition of the two middle horizontal arrows is the zero section.
Using \cite[Def~1.12]{BF97}, we obtain a short exact sequence 
\begin{equation}\label{eq:cos-loc-func4}0 \leftarrow h^1/h^0(\FF_{X/Z}^\vee)(\rho) \leftarrow h^1/h^0(\EE_{X/Y}^\vee)(\sigma) \leftarrow \mathfrak{K}\leftarrow 0.
\end{equation}
Finally, because $\rho\circ \psi^\vee = \sigma \circ \phi^\vee=0$, 
we may replace the middle row of \eqref{eq:cos-loc-func3} with \eqref{eq:cos-loc-func4}. We get a fiber square with horizontal arrows that are smooth surjections:
\[
\begin{tikzcd}
h^1/h^0(\EE_{X/Y}^\vee)(\rho) \arrow[r, "\alpha"] & h^1/h^0(\FF_{X/Z}^\vee)(\sigma) \\
\mathfrak{C}_{X/Y} \arrow[r] \arrow[u] & \mathfrak{C}_{X/Z} \arrow[u]
\end{tikzcd}
\]
From here, we can compute the cosection localized virtual class:
\[
[X]^{\vir}_{\rho} = 0^!_{\rho}[\mathfrak{C}_{X/Y}] = 0^!_{\rho}\alpha^*[\mathfrak{C}_{X/Z}] = 0^!_{\sigma}[\mathfrak{C}_{X/Z}] = [X]^{\vir}_{\sigma}.
\]
Here, $0^!_{\sigma}$ and $0^!_{\rho}$ are the cosection localized Gysin maps of \cite[Section~2]{KiLi13}. The second equality above is the compatibility of these maps with the usual Gysin maps.
\end{proof}

\bibliographystyle{abbrv}
\bibliography{cosec}

\end{document}